\newcommand{\N}{\mathbb N} 
\newcommand{\Z}{\mathbb Z} 
\newcommand{\R}{\mathbb R} 
\newcommand{\Rn}{\R^n} 
\newcommand{\Sph}{\mathbb{S}^{n-1}} 
\newcommand{\CV}{\operatorname{Conv}(\Rn)}
\newcommand{\CVc}{\operatorname{Conv_{c}}(\Rn)} 
\newcommand{\CVcn}{\operatorname{Conv}_{\operatorname{c}}^n(\Rn)} 
\newcommand{\CVs}{\operatorname{Conv_{sc}}(\Rn)} 
\newcommand{\K}{{\mathcal K}}
\newcommand{\Kn}{\K^n} 
\newcommand{\Knn}{\K_n^n} 
\newcommand{\Ind}{\mathrm{I}^\infty}
\renewcommand{\d}{\,\mathrm{d}}
\newcommand\SLn{\operatorname{SL}(n)}
\newcommand\GLn{\operatorname{GL}(n)}
\newcommand{\dom}{\operatorname{dom}}
\newcommand{\interior}{\operatorname{int}}
\newcommand{\diam}{\operatorname{diam}}
\newcommand{\bd}{\operatorname{bd}}
\newcommand{\conv}{\operatorname{conv}}
\newcommand{\epi}{\operatorname{epi}}
\newcommand{\infconv}{\mathbin{\Box}}
\newcommand{\eto}{\stackrel{epi}{\longrightarrow}}
\newcommand{\Mom}[2]{M_{#1}^{#2-1}} 
\newcommand{\MomO}{M^{n-1}}
\newcommand{\MomZ}{M^{0}}
\newtheorem{lemma}{Lemma}[section]
\newtheorem{theorem}[lemma]{Theorem}
\newtheorem*{theorem*}{Theorem}
\newtheorem{proposition}[lemma]{Proposition}
\newtheorem{corollary}[lemma]{Corollary}
\newtheorem*{corollary*}{Corollary}
\theoremstyle{definition}
\theoremstyle{remark}
\newtheorem{remark}[lemma]{Remark}
\newtheorem*{remark*}{Remark}
\newtheorem{example}[lemma]{Example}
\let\oldabstract\abstract
\let\oldendabstract\endabstract
\renewenvironment{abstract}
{%
               {\list{}{\addtolength{\leftmargin}{-0.5em} 
                        \listparindent 1.5em%
                        \itemindent    \listparindent%
                        \rightmargin   \leftmargin%
                        \parsep        \z@ \@plus\p@}%
                \item\relax}%
               {\endlist}%
\oldabstract}
{\oldendabstract}
\title{Metrics and Isometries for Convex Functions}
\author{Ben Li and Fabian Mussnig}
\date{}
\begin{document}

\maketitle

\begin{abstract}
We introduce a class of functional analogs of the symmetric difference metric on the space of coercive convex functions on $\Rn$ with full-dimensional domain. We show that convergence with respect to these metrics is equivalent to epi-convergence. Furthermore, we give a full classification of all isometries with respect to some of the new metrics. Moreover, we introduce two new functional analogs of the Hausdorff metric on the spaces of coercive convex functions and super-coercive convex functions, respectively, and prove equivalence to epi-convergence.
\bigskip

{\noindent
{\bf 2020 AMS subject classification:} 26B25 (54E40, 52A20, 52A41)\\
{\bf Keywords:} metric, convex function, coercive, epi-convergence, symmetric difference, isometry.
}
\end{abstract}

\section{Introduction}
In the field of convex geometry the focus of interest is on non-empty, compact, convex subsets of $\Rn$, which are also called \emph{convex bodies} and which are commonly denoted by $\Kn$. Usually the space of convex bodies is equipped with the \emph{Hausdorff metric}
$$d_H(K,L)=\inf \{\varepsilon\geq 0\colon K\subseteq L + \varepsilon B^n \text{ and } L \subseteq K + \varepsilon B^n\}$$
for every $K,L\in\Kn$, where $\varepsilon B^n=\{x\in\Rn\colon |x|\leq \varepsilon\}$ denotes the Euclidean ball of radius $\varepsilon\geq 0$ in $\Rn$ and where $C+D=\{x+y\colon x\in C, y\in D\}$ denotes the Minkowski sum of the convex bodies $C,D\in\Kn$. Continuity with respect to the Hausdorff metric is an important property of many operators on convex bodies. For example, the $n$-dimensional volume, $V_n$, is continuous and more generally so are the intrinsic volumes. Also well-known operators that assign a vector to a convex body, e.g., the moment vector, or again a convex body, e.g., the projection body, are continuous with respect to this metric.

For certain applications also other metrics for (subsets of) convex bodies are used. Most importantly, the set of convex bodies with non-empty interiors, $\Knn$, is frequently considered together with the \emph{symmetric difference metric}
$$d_S(K,L)=V_n(K\Delta L)$$
for $K,L\in\Knn$, where $K\Delta L = K\backslash L \cup L\backslash K$ is the symmetric difference of $K$ and $L$. Note that a convex body $K\in\Kn$ has non-empty interior and is therefore an element of $\Knn$ if and only if $V_n(K)>0$. It was shown by Shephard and Webster \cite{shephard_webster_metrics} that the Hausdorff metric and the symmetric difference metric are equivalent on $\Knn$, that is
$$d_H(K_i,K)\to 0 \quad \Longleftrightarrow \quad d_S(K_i,K)\to 0$$
as $i\to\infty$ for every sequence $K_i\in\Knn$ and $K\in\Knn$.

Among the central applications where the symmetric difference metric is used instead of the Hausdorff metric, are best and random approximations of convex bodies, for example by using polytopes with a given number of vertices or facets. We remark that the deep connection of this problem with the affine surface area of a convex body is of particular interest. See, for example, \cite{besau_hoehner_kur_imrn,besau_ludwig_werner_tams_2018,boeroeczky_adv_math_2000,gruber_math_ann_1988,gruber_handbook_approx,gruber_forum_math_1993,ludwig_mathematika_1999,reitzner_adv_math_2005,schneider_studia_sci_math_hungar_1986,schuett_math_nachr_1994}.

\medskip

In recent years, many results from the theory of convex bodies have been extended to convex functions \cite{alesker_adv_geom_2019,alonso_artstein_gonzalez_jimenez_villa_math_ann_2019,artstein_florentin_segal_adv_math_2020,artstein_klartag_schuett_werner_jfa_2012,artstein_milman_annals_2009,artstein_milman_jems_2011,artstein_slomka_pams,artstein_slomka_jga,colesanti_fragala_adv_math_2013,colesanti_lombardi_parapatits_studia_math_2018,colesanti_ludwig_mussnig_mink,colesanti_ludwig_mussnig,colesanti_ludwig_mussnig_jfa_2020,colesanti_ludwig_mussnig_hadwiger,fradelizi_meyer_adv_math_2008,knoerr_j_smooth,knoerr_j_support,li_existence,li_schuett_werner_israel_j_math_2019,li_schuett_werner_j_geom_anal,milman_rotem_jfa_2013,mussnig_adv_math_2019,mussnig_canadian_j_math}.
In particular, the map
\begin{equation}
\label{eq:u_mapsto_int_e_-u}
u\mapsto \int_{\Rn} e^{-u(x)} \d x
\end{equation}
where $u:\Rn\to \R\cup \{+\infty\}$ is a convex function, has been established as a functional analog of the volume of a convex body. For example, by using this analogy the Pr\'ekopa-Leindler inequality \cite{leindler} can be seen as a functional version of the Brunn-Minkowski inequality where the volume is replaced by \eqref{eq:u_mapsto_int_e_-u}. A usual assumption is to consider convex functions $u:\Rn\to\R\cup \{+\infty\}$ that are \emph{proper}, i.e., $u\not\equiv +\infty$, and lower semicontinuous and the space of all such functions will be denoted by $\CV$. Furthermore, for many applications it is also assumed that the functions are \emph{coercive}, i.e., $\lim_{|x|\to\infty} u(x)=+\infty$ which is equivalent to $\int_{\Rn} e^{-u(x)}\d x < +\infty$. We will denote the space of all proper, lower semicontinuous, coercive, convex functions on $\Rn$ by $\CVc$.

The standard topology on the space $\CV$ is induced by epi-convergence. Here, we say that a sequence of convex functions $u_k\in\CV$ is epi-convergent to $u\in\CV$ if for every $x\in\Rn$
\begin{itemize}
    \item $\liminf_{k\to\infty} u_k(x_k)\geq u(x)$ for every sequence $x_k\in\Rn$ such that $x_k\to x$,
    \item $\limsup_{k\to\infty} u_k(x_k)\leq u(x)$ for some sequence $x_k\in\Rn$ such that $x_k\to x$.
\end{itemize}
In this case we will also write $u_k\eto u$. We remark that limits with respect to epi-convergence are always lower semicontinuous. While this notion of convergence, which is sometimes also called $\Gamma$-convergence, has been studied extensively in the fields of convex and variational analysis, it seemingly has only recently found its way into convex geometry and geometric analysis. For recent examples in these areas, where epi-convergence plays a crucial part, we refer to \cite{colesanti_ludwig_mussnig_mink,colesanti_ludwig_mussnig,colesanti_ludwig_mussnig_jfa_2020,colesanti_ludwig_mussnig_hadwiger,knoerr_j_smooth,knoerr_j_support,li_existence}. In particular, on the space $\CVc$ the functional \eqref{eq:u_mapsto_int_e_-u} is continuous with respect to this topology.

The topology described above is in fact metrizable and a precise definition for a corresponding metric has been established in the literature (see Section~\ref{subse:epi_conv} for details). However, this metric seems to be cumbersome for many practical purposes. We will therefore introduce and study new functional analogs of the symmetric difference metric which correspond to $L^p$-metrics of classes of associated quasi-concave functions. For this purpose we restrict to the space
$$\CVcn = \{u:\Rn\to \R\cup \{+\infty\} \colon u \text{ is proper, l.s.c, coercive, convex and}\,\dim \dom u = n\}$$
where $\dom u = \{x\in\Rn\colon u(x)<+\infty\}$ is the \emph{domain} of the function $u$ and $\dim \dom u$ denotes its dimension. Equivalently,
$$u\in\CVcn \quad \Longleftrightarrow \quad u\in\CV \text{ and } 0<\int_{\Rn} e^{-u(x)} \d x < +\infty.$$
Furthermore, for $p\in[1,\infty)$ let
\begin{align*}
\Mom{p}{n}:=\{\zeta:\R\to(0,\infty)\,:\, &\zeta \text{ is continuous, strictly decreasing, }\\
&\zeta^p \text{ has finite moment of order } n-1\}.
\end{align*}
Here, we say that a continuous function $\xi:\R\to [0,\infty)$ has finite moment of order $k\in\N$ if $\int_0^\infty \xi(t) t^{k}\d t < +\infty$. In particular, the function $t\mapsto e^{-t}$ is an element of $\Mom{p}{n}$ for every $p\in[1,\infty)$ and $n\in\N$. For $\zeta\in\Mom{p}{n}$ we now define
$$\delta_{\zeta,p}(u,v) := \|\zeta(u)-\zeta(v)\|_p = \left(\int_{\Rn} |\zeta(u(x))-\zeta(v(x))|^p \d x \right)^{\frac 1p}$$
for every $u,v\in\CVcn$. We will prove the following result.

\begin{theorem}
\label{thm:metric_delta_zeta_p}
For $p\in[1,\infty)$ and $\zeta\in\Mom{p}{n}$ the functional $\delta_{\zeta,p}$ defines a metric on $\CVcn$. Furthermore, convergence with respect to this metric is equivalent to epi-convergence, that is, for every $u_k,u\in\CVcn$ we have 
$$\delta_{\zeta,p}(u_k,u)\to 0 \quad \Longleftrightarrow \quad  u_k \eto u$$
as $k\to\infty$.
\end{theorem}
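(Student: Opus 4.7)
\emph{Metric axioms.} Nonnegativity, symmetry, and the triangle inequality (Minkowski in $L^p$) are routine. Finiteness reduces to showing $\zeta(u)\in L^p(\Rn)$ for every $u\in\CVcn$: coercivity and convexity give an affine minorant $u(x)\geq a|x|-b$ with $a>0$, so polar coordinates together with the change of variables $s=ar-b$ bound $\int_{\Rn}\zeta(u(x))^p\d x$ by a constant multiple of $\int_0^\infty\zeta(s)^p(s+b)^{n-1}\d s$, finite because $\zeta\in\Mom{p}{n}$. For nondegeneracy, $\delta_{\zeta,p}(u,v)=0$ implies $\zeta(u)=\zeta(v)$ a.e., and strict monotonicity of $\zeta$ yields $u=v$ a.e.; continuity of $u,v$ on the open full-dimensional set $\interior(\dom u)\cap\interior(\dom v)$ then forces pointwise equality there, and lower semicontinuity propagates it to all of $\Rn$.

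\emph{Epi-convergence $\Rightarrow$ metric convergence.} I would argue by dominated convergence. First, $\zeta\in\Mom{p}{n}$ forces $\lim_{t\to+\infty}\zeta(t)=0$, since a positive limit would make $\zeta(t)^p t^{n-1}$ nonintegrable near $+\infty$. Pointwise a.e.\ convergence of $\zeta(u_k)\to\zeta(u)$ follows from three observations: on $\interior(\dom u)$ epi-convergence of proper l.s.c.\ convex functions coincides with locally uniform convergence; for $x\notin\cl(\dom u)$ the liminf inequality forces $u_k(x)\to+\infty$, hence $\zeta(u_k(x))\to 0=\zeta(u(x))$; and $\bd(\dom u)$ is Lebesgue-null. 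For the dominant, I invoke the standard fact that an epi-convergent sequence of coercive convex functions with coercive limit eventually admits a common affine minorant $u_k(x)\geq a|x|-b$, giving $|\zeta(u_k)-\zeta(u)|^p\leq 2^p(\zeta(a|\cdot|-b)^p+\zeta(u)^p)\in L^1(\Rn)$; dominated convergence yields $\delta_{\zeta,p}(u_k,u)\to 0$.

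\emph{Metric convergence $\Rightarrow$ epi-convergence.} Since epi-convergence on $\CVcn$ is metrizable, a subsequence argument reduces the problem to showing that every subsequence of $(u_k)$ has a further subsequence epi-converging to $u$. Extract a sub-subsequence $(u_{k_\ell})$ along which $\zeta(u_{k_\ell})\to\zeta(u)$ pointwise almost everywhere; strict monotonicity and continuity of $\zeta$ (with $\zeta^{-1}(0)=+\infty$) then transfer this to $u_{k_\ell}(x)\to u(x)$ for almost every $x$. The classical result that pointwise a.e.\ convergence of convex functions promotes to locally uniform convergence on $\interior(\dom u)$ handles the limsup inequality at interior points of $\dom u$ directly and at boundary points by approaching along a segment from the interior and using lower semicontinuity and convexity. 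The liminf inequality at $x_0\in\cl(\dom u)$ follows by bounding $u_{k_\ell}$ along a segment from $z_\ell$ to a well-chosen interior point by convexity, then passing to the limit with lower semicontinuity. \emph{The main technical obstacle} is the liminf inequality at $x_0\notin\cl(\dom u)$, where $u(x_0)=+\infty$ and one must exclude bounded subsequences of $u_{k_\ell}(z_\ell)$ with $z_\ell\to x_0$. My plan is a contradiction argument: if $u_{k_\ell}(z_\ell)$ remained bounded, choosing $n$ interior points $y_1,\ldots,y_n$ so that $\conv\{y_1,\ldots,y_n,x_0\}$ has positive-measure intersection with $\Rn\setminus\cl(\dom u)$ and invoking convexity of $u_{k_\ell}$ would give a uniform upper bound of $u_{k_\ell}$ on the simplex $\conv\{y_1,\ldots,y_n,z_\ell\}$, contradicting pointwise a.e.\ convergence to $u\equiv+\infty$ on that intersection.
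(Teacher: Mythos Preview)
Your plan is correct and follows the paper's three-part decomposition, but the arguments differ in two places worth flagging.

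For epi-convergence $\Rightarrow$ metric convergence, the paper does not build an explicit dominant. Instead it combines pointwise a.e.\ convergence of $\zeta\circ u_k$ to $\zeta\circ u$ with continuity of $u\mapsto\int_{\Rn}\zeta(u)^p\d x$ on $\CVc$ (Lemma~\ref{le:int_continuous}) and then applies the Riesz--Scheff\'e lemma (Lemma~\ref{le:pth_Scheffe}): if $f_k\to f$ a.e.\ in $\mathcal L^p$ and $\|f_k\|_p\to\|f\|_p$, then $\|f_k-f\|_p\to 0$. Your dominated-convergence route via the uniform cone bound (Lemma~\ref{le:uniform_cone}) is equally valid and arguably more self-contained; the paper's route is slightly slicker in that it hides the cone bound inside the cited continuity lemma.

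For metric convergence $\Rightarrow$ epi-convergence, the paper works with the full sequence throughout: two geometric lemmas establish $u_k(x_0)\to u(x_0)$ for every $x_0\in\interior(\dom u)$ (via a half-space/slope argument, Lemma~\ref{le:pointw_conv_in_int_dom}) and $u_k(x_0)\to+\infty$ for every $x_0\in\interior(\Rn\setminus\dom u)$ (via a convex-hull argument essentially identical to your simplex obstruction, Lemma~\ref{le:x_0_outside_int_dom}); then the dense-set criterion in Lemma~\ref{le:epi_conv_equiv} finishes. Your route---extract an a.e.\ convergent subsequence from $L^p$ convergence, upgrade to epi-convergence of the subsequence, and conclude by the subsequence principle---is genuinely shorter, since it replaces both geometric lemmas by a one-line measure-theoretic fact. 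However, you then over-elaborate the upgrade step: once $u_{k_\ell}\to u$ on a set of full measure (hence on a dense set) and $u\in\CVcn$, Lemma~\ref{le:epi_conv_equiv} gives epi-convergence of $u_{k_\ell}$ immediately, so the separate hand-verification of liminf and limsup at boundary and exterior points is unnecessary. In particular, your liminf sketch at points of $\bd(\dom u)$ is vague as stated (it is not clear how ``bounding $u_{k_\ell}$ along a segment'' produces a \emph{lower} bound on $u_{k_\ell}(z_\ell)$), but this is moot given the cleaner one-line route via the dense-set criterion.
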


On the space $\Knn$ equipped with the symmetric difference metric, Gruber \cite{gruber_mathematika_1978} classified all isometries that map again into the same space and thereby characterized measure-preserving affinities. Here, for two metric spaces $(X,d_X)$ and $(Y,d_Y)$ a map $I:(X,d_X)\to(Y,d_Y)$ is said to be an \emph{isometry} if
$$d_Y(I(x_1),I(x_2))=d_X(x_1,x_2)$$
for every $x_1,x_2\in X$. See also \cite{gruber_colloq_1980,gruber_isr_1982,gruber_lettl_1980,schneider_coll_math_1975,weisshaupt_2001} for similar results.

In the special case $p=1$ the metric $\delta_{\zeta,p}$ corresponds to a measure (depending on $\zeta$) of the symmetric difference of the epigraphs of the functions (see Section~\ref{subse:measures} for details). Following ideas of Gruber \cite{gruber_mathematika_1978} and also Cavallina and Colesanti \cite{cavallina_colesanti} we will give a full classification of all isometries $I:(\CVcn,\delta_{\zeta,1})\to(\CVcn,\delta_{\zeta,1})$. For $\zeta\in\Mom{1}{n}$ let
$$\Phi(\zeta):=\{\phi\in\GLn\colon t\mapsto \zeta^{-1}(\zeta(t)/|\det \phi|) \text{ is well-defined and convex on } \R\}.$$
Observe that $\Phi(\zeta)$ is not empty since every $\phi\in\GLn$ with $|\det \phi|=1$ satisfies the necessary conditions.

\begin{theorem}
\label{thm:class_iso}
Let $\zeta\in \Mom{1}{n}$. A map $I:(\CVcn,\delta_{\zeta,1})\to(\CVcn,\delta_{\zeta,1})$ is an isometry if and only if there exist $\phi\in\Phi(\zeta)$ and $x_0\in\Rn$ such that
$$I(u)=f(u\circ \alpha^{-1})$$
for every $u\in\CVcn$, where $\alpha(x)=\phi(x)+x_0$ for $x\in\Rn$ and $f(t)=\zeta^{-1}(\zeta(t)/|\det \phi|)$ for $t\in\R$.
\end{theorem}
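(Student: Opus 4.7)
The sufficiency is a direct change-of-variables calculation. Given $\phi\in\Phi(\zeta)$, $x_0\in\Rn$ and $f$ as in the statement, the defining identity $\zeta(f(t))=\zeta(t)/|\det\phi|$ combined with the substitution $y=\alpha^{-1}(x)$ (Jacobian $|\det\phi|$) gives
\[
\delta_{\zeta,1}\bigl(I(u),I(v)\bigr)=\int_{\Rn}\frac{\bigl|\zeta(u(\alpha^{-1}(x)))-\zeta(v(\alpha^{-1}(x)))\bigr|}{|\det\phi|}\,\d x=\delta_{\zeta,1}(u,v),
\]
so every such $I$ is an isometry, provided one also checks that $f\circ u\circ\alpha^{-1}\in\CVcn$; this is immediate from $\phi\in\Phi(\zeta)$ (convexity of $f$), the strict monotonicity of $f$, and the fact that $f(+\infty)=+\infty$. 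The real content of the theorem is the converse, and the plan is to adapt the strategy of Gruber \cite{gruber_mathematika_1978} and Cavallina--Colesanti \cite{cavallina_colesanti}.

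The first step is to recast the problem in terms of sets. Using the representation to be established in Section~\ref{subse:measures},
\[
\delta_{\zeta,1}(u,v)=\nu\bigl(E(u)\,\Delta\,E(v)\bigr),\qquad E(u):=\{(x,s)\in\R^{n+1}:s>u(x)\},
\]
where $\nu=\lambda_n\otimes\mu_\zeta$ is the product of $n$-dimensional Lebesgue measure with the Borel measure $\mu_\zeta$ on $\R$ having density $-\zeta'$, the isometry $I$ induces a symmetric-difference-preserving bijection $\tilde I$ on the family $\mathcal{E}=\{E(u):u\in\CVcn\}$ of strict epigraphs. In this set-theoretic language the problem becomes: classify $\nu$-measure-preserving bijections of $\mathcal{E}$ that respect symmetric differences.

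Next, I would run a Gruber-type point-recovery argument. Elementary Boolean identities together with the isometry property force $\tilde I$ to preserve inclusions of epigraphs, hence suprema and infima in $\CVcn$ whenever these stay inside $\CVcn$. Testing against the simplest elements of $\mathcal{E}$, namely cylindrical epigraphs $K\times[c,\infty)$ coming from the functions equal to $c$ on a body $K\in\Knn$ and $+\infty$ elsewhere, one isolates this subfamily metrically and reads off an induced action on $\Knn\times\R$ preserving the corresponding product symmetric difference. An approximation and gluing step, calibrated to the layer-cake decomposition of $\nu$, then assembles $\tilde I$ into a $\nu$-preserving bijection $T$ of $\R^{n+1}$ that sends strict epigraphs to strict epigraphs. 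The epigraph-preservation and product-measure-preservation together force $T$ to have the triangular affine form $T(x,s)=(\phi(x)+x_0,g(s))$ with $\phi\in\GLn$, $x_0\in\Rn$ and a strictly increasing continuous $g:\R\to\R$.

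To finish, applying $\nu$-preservation to slabs $A\times[s,\infty)$ with $A\subseteq\Rn$ of finite Lebesgue measure yields $|\det\phi|\,\zeta(g(s))=\zeta(s)$, so $g=f$. Since $T(E(u))=E(f\circ u\circ\alpha^{-1})$, the requirement that $f\circ u\circ\alpha^{-1}\in\CVcn$ for every $u\in\CVcn$ is equivalent to the global convexity of $f$, i.e.\ to $\phi\in\Phi(\zeta)$. The principal obstacle will be the recovery step: Gruber's original arguments rely on boundedness of convex bodies and on translation invariance of Lebesgue measure in all coordinates, neither of which is available here, so a substantial part of the work will go into selection lemmas for unbounded epigraphs and into ruling out affine maps of $\R^{n+1}$ with a non-trivial shear between the horizontal and vertical factors, since any such shear would spoil the property of being an epigraph.
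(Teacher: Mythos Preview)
Your sufficiency argument is fine and agrees with the paper. For the converse, the outline is in the right spirit (symmetric-difference representation of $\delta_{\zeta,1}$, then a Gruber-type reconstruction), but there is a genuine gap at the very first nontrivial step.

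You write that ``elementary Boolean identities together with the isometry property force $\tilde I$ to preserve inclusions of epigraphs.'' This is not elementary: it requires knowing in advance that $I$ preserves the total mass $\Psi_\zeta(\epi u)=\int_{\Rn}\zeta(u)\,\d x$. In the paper this is Proposition~\ref{prop:iso_measure_pres}, and it is the single hardest step of the whole proof; it occupies Lemmas~\ref{le:conv_lvl_sets_bnd_min}--\ref{le:cond_seq_unif_bd} together with the functional selection theorem (Theorem~\ref{thm:blaschke_selection_cvx_fcts}). The obstacle is precisely the one you flag only at the end---epigraphs are unbounded and $\nu$ is not translation invariant in the vertical direction---but it already bites here, before any order-theoretic consequences can be drawn. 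Without measure preservation one cannot conclude that $\epi u\subsetneq\epi v$ implies $\epi I(u)\subsetneq\epi I(v)$ (Lemma~\ref{le:isometry_preserves_order_cup_cap}\,\ref{it:strict_incl}), and the whole machinery of preservation of $\vee$ and $\wedge$ then has no foundation.

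The second gap is the step ``one isolates this subfamily [cylinders $K\times[c,\infty)$] metrically.'' No metric characterization of the functions $\Ind_K+c$ inside $(\CVcn,\delta_{\zeta,1})$ is offered, and none is apparent. The paper does not attempt this. Instead it follows Gruber's parallelotope route: it shows via the Buchman--Valentine criterion (Lemma~\ref{le:buchma_valentine}) that $\dom I(\Ind_P+t)$ is again a parallelotope whenever $P$ is (Proposition~\ref{prop:isometries_parallelotopes}), then uses neighbour/pile combinatorics (Lemmas~\ref{le:parallelotopes_neighbors}--\ref{le:p_q_refined}) to extract a single affinity $\alpha(x)=\phi x+x_0$ acting on domains, independently of the level $t$ (Proposition~\ref{prop:affinity}). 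Only after that, inductive partitions in the style of \cite{cavallina_colesanti} (Lemmas~\ref{le:partition_decomposition}--\ref{le:iso_f}) force $I(\Ind_K+t)=\Ind_{\alpha(K)}+f(t)$ and identify $f$. Your proposed global $\nu$-preserving bijection $T:\R^{n+1}\to\R^{n+1}$ is never constructed in the paper and is not obviously available: $I$ is only defined on a very thin family of convex subsets of $\R^{n+1}$, and promoting it to a pointwise bijection of the ambient space would itself require a substantial argument.
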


In order to prove Theorem~\ref{thm:class_iso} we will use a functional analog of the Blaschke selection theorem which is presented in Section~\ref{subse:blaschke_selection}. Furthermore, we immediately obtain the following result.

\begin{corollary}
A map $I:(\CVcn,\delta_{\zeta,1})\to(\CVcn,\delta_{\zeta,1})$ is an isometry for every $\zeta\in\Mom{1}{n}$ if and only if there exist $\phi\in\GLn$ with $|\det \phi|=1$ and $x_0\in\Rn$ such that
$$I(u)=u\circ \alpha^{-1}$$
for every $u\in\CVcn$, where $\alpha(x)=\phi(x)+x_0$ for $x\in\Rn$.
\end{corollary}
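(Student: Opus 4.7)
The plan is to deduce the corollary directly from Theorem~\ref{thm:class_iso} by testing against two carefully chosen functions in $\Mom{1}{n}$.

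For the ``if'' direction, suppose $\phi\in\GLn$ satisfies $|\det\phi|=1$ and $x_0\in\Rn$. For every $\zeta\in\Mom{1}{n}$, the map $t\mapsto\zeta^{-1}(\zeta(t)/|\det\phi|)=t$ is trivially convex on $\R$, so $\phi\in\Phi(\zeta)$, and the associated $f$ in Theorem~\ref{thm:class_iso} is the identity. Hence $I(u)=u\circ\alpha^{-1}$ is an isometry with respect to every $\delta_{\zeta,1}$.

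For the ``only if'' direction, assume $I$ is an isometry with respect to $\delta_{\zeta,1}$ for every $\zeta\in\Mom{1}{n}$. I would apply Theorem~\ref{thm:class_iso} separately to $\zeta_1(t)=e^{-t}$ and $\zeta_2(t)=e^{-2t}$, both of which lie in $\Mom{1}{n}$ (the moment condition reduces in each case to $\int_0^\infty e^{-\lambda t}t^{n-1}\d t<\infty$). This produces two representations $I(u)=f_j(u\circ\alpha_j^{-1})$, $j=1,2$, with $\alpha_j$ an affine bijection of $\Rn$, and a direct computation gives
\[
f_1(t)=t+\log|\det\phi_1|,\qquad f_2(t)=t+\tfrac{1}{2}\log|\det\phi_2|.
\]

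To reconcile these two descriptions of the same map $I$, I would test against the family of functions $u_K\in\CVcn$, $K\in\Knn$, defined by $u_K\equiv 0$ on $K$ and $u_K\equiv +\infty$ off $K$ (these are coercive since they are eventually $+\infty$, and they have $n$-dimensional domain). Since $u_K\circ\alpha_j^{-1}=u_{\alpha_j(K)}$, the function $I(u_K)$ must simultaneously equal the constant $\log|\det\phi_1|$ on $\alpha_1(K)$ (and $+\infty$ elsewhere) and the constant $\tfrac12\log|\det\phi_2|$ on $\alpha_2(K)$ (and $+\infty$ elsewhere). Matching the finiteness domains for all $K\in\Knn$ (e.g.\ letting $K$ range over translates of small Euclidean balls and shrinking the radii) forces $\alpha_1=\alpha_2=:\alpha$, and hence $\phi_1=\phi_2=:\phi$; matching the constant values then forces $\log|\det\phi|=\tfrac12\log|\det\phi|$, i.e.\ $|\det\phi|=1$. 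Consequently $f_1$ is the identity, and $I(u)=u\circ\alpha^{-1}$ with $\alpha(x)=\phi(x)+x_0$ and $|\det\phi|=1$, as required. The only nontrivial piece is this uniqueness-type reconciliation step, which is the main (mild) obstacle; once the parameters $\alpha_1,\alpha_2$ have been identified, the determinant condition falls out of a single scalar equation.
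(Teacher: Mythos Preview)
Your proof is correct and is precisely the kind of argument the paper has in mind: the corollary is stated without proof as an immediate consequence of Theorem~\ref{thm:class_iso}, and you have cleanly supplied the omitted details by applying the theorem to two exponentials and reconciling the resulting representations on indicator functions. The reconciliation step (forcing $\alpha_1=\alpha_2$ via shrinking balls, then reading off $|\det\phi|=1$ from the mismatch of additive constants) is exactly the mild uniqueness argument that the paper suppresses.
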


\begin{remark}
Observe that for $\zeta\in\Mom{1}{n}$ every isometry $I:(\CVcn,\delta_{\zeta,1})\to(\CVcn,\delta_{\zeta,1})$ corresponds to an isometry on $\{g=\zeta\circ u\colon u\in\CVcn\}\subset L^1(\Rn)$, equipped with the $L^1$-metric, that maps again into the same space. By Theorem~\ref{thm:class_iso} each such isometry is of the form
$$g \mapsto \frac{g\circ \alpha^{-1}}{|\det \phi|}$$
with $\phi\in\Phi(\zeta)$ and $\alpha(x)=\phi(x)+x_0$ for some $x_0\in\Rn$. We remark that isometries on (subspaces) of $L^p$-spaces have been studied before, see, for example, \cite{hardin_iumj_1981,lamperti_pjm_1958,rudin_iumj_1976,sourour_jfa_1978}. However, due to the special structure of the function space that is considered here, we are not aware of any method that would allow to deduce Theorem~\ref{thm:class_iso} from these results.
\end{remark}

In Section~\ref{se:further_metrics} we introduce two further metrics which can be interpreted as functional analogs of the Hausdorff metric. We will show that convergence with respect to these metrics is equivalent to epi-convergence on the space $\CVc$ and on the subspace of super-coercive functions, $\CVs$, respectively. Here, a function $u:\Rn\to\R\cup \{+\infty\}$ is said to be \emph{super-coercive} if $\lim_{|x|\to\infty} u(x)/|x| = +\infty$ and we write
$$\CVs=\{u\in \CV\colon u \text{ is super-coercive}\}.$$
This function space has been of particular interest recently \cite{colesanti_ludwig_mussnig_jfa_2020,knoerr_j_smooth,knoerr_j_support,mussnig_canadian_j_math}. Among others, new functional analogs of the classical intrinsic volumes were introduced and characterized on this space by using Hessian measures \cite{colesanti_ludwig_mussnig_hadwiger}. We remark that, using the Legendre-Fenchel transform or convex conjugate, the space $\CVs$ corresponds to the space of all convex functions $u:\Rn\to\R$.

\medskip

This article is organized as follows. We collect some basic results about convex bodies, convex functions, epi-convergence and $\mathcal{L}^p$ spaces in Section~\ref{se:preliminaries}. We then discuss the metric $\delta_{\zeta,p}$ and prove Theorem~\ref{thm:metric_delta_zeta_p} in Section~\ref{se:sym_diff_metric_cvx_fcts}. Section~\ref{se:isometries} is devoted to the study of isometries on $(\CVcn,\delta_{\zeta,1})$ and the proof of Theorem~\ref{thm:class_iso}. A discussion of further metrics can be found in Section~\ref{se:further_metrics}.

\section{Preliminaries}
\label{se:preliminaries}
We work in $n$-dimensional Euclidean space $\Rn$. For two elements $x,y\in\Rn$ we denote by $\langle x,y\rangle$ their inner product and by $|x|=\sqrt{\langle x,x \rangle}$ the Euclidean norm of $x$. We write $\Sph=\{x\in\Rn\colon |x|=1\}$ for the unit sphere in $\Rn$ and we denote by
$$B(x_0,r)=\{x\in\Rn\colon |x-x_0|\leq r\}$$
the Euclidean ball of radius $r>0$ with center $x_0\in\Rn$. In particular, $B^n = B(0,1)$. If for some $y\in\Rn$,
$$H(x_0)=\{x\in\Rn \colon \langle x,y \rangle = \langle x_0,y\rangle\}$$
denotes a hyperplane through $x_0\in\Rn$, then we denote by $H^\pm(x_0)$ the corresponding half-spaces. In particular, we write $H^+(x)=\{x\in\Rn\colon \langle x,y\rangle \geq \langle x_0,y\rangle\}$.

For a set $A\subseteq \Rn$, let $\bd A$ and $\interior A$ denote the (topological) boundary and interior of $A$, respectively. Moreover,
\begin{align*}
\diam A &= \sup\{|x-y|\colon x,y\in A\} \in [0,+\infty]\\
\conv A &= \left\{\sum_{i=1}^m \lambda_i x_i \colon m\in\N, x_i\in A, \lambda_i \geq 0, \sum_{i=1}^m \lambda_i=1 \right\}
\end{align*}
will denote the \emph{diameter} and \emph{convex hull} of $A\subseteq \Rn$.

\subsection{Convex Bodies and Convex Functions}
We collect some elementary facts on convex bodies and convex functions. We use \cite{rockafellar_wets} and \cite{schneider} as standard references.

\medskip

For every $u\in\CVc$ and $t\in\R$ the \emph{sublevel sets}
$$\{u\leq t\} = \{x\in\Rn\colon u(x)\leq t\}$$
are either convex bodies or empty sets. In particular, for every $u\in\CVcn$ and $t>\min_{x\in\Rn} u(x)$ we have
$$\{u\leq t\} \in\Knn.$$
Similarly, if $\zeta:\R\to[0,\infty)$ is decreasing we denote by
$$\{\zeta\circ u\geq s\} = \{x\in\Rn\colon \zeta(u(x))\geq s\}$$
the \emph{superlevel set} of the quasi-concave function $\zeta\circ u$ at $s\geq 0$.
Furthermore, every convex function $u\in\CV$ is uniquely determined by its \emph{epigraph}
$$\epi u =\{(x,t)\in\Rn\times \R \colon u(x)\leq t\},$$
which is a convex subset of $\Rn\times \R = \R^{n+1}$.

For $u\in\CV$ and $x\in\Rn$, we say that $y\in\Rn$ is a \emph{subgradient} of $u$ at $x$ if
$$u(z)\geq u(x)+ \langle z-x, y \rangle$$
for every $z\in\Rn$ and we denote by $\partial u(x)$ the \emph{subdifferential} of $u$ at $x$, which is the set of all subgradients of $u$ at $x$. The subdifferential $\partial u(x)$ is always a closed, convex subset of $\Rn$ and it is non-empty for every $x\in\dom u$. Furthermore, we write
$$u^*(y)=\sup\nolimits_{x\in\Rn} \big(\langle x,y \rangle -u(x) \big)\in \R \cup \{+\infty\}$$
$y\in\Rn$, for the \emph{convex conjugate} or \emph{Legendre-Fenchel transform} of $u\in\CV$. Since $u$ is proper and lower semicontinuous, it follows that also $u^*\in\CV$ and furthermore $u^{**}=u$. 

For two convex functions $u,v\in\CV$ their \emph{infimal convolution} $u\infconv v$ at $x\in\Rn$ is defined as
$$(u\infconv v)(x)=\inf\nolimits_{x=y+z} \big( u(y)+v(z)\big)$$
or equivalently, 
$$\epi (u\infconv v) = \epi u + \epi v,$$
where the sum on the right-hand side denotes the Minkowski sum of convex sets in $\R^{n+1}$. If furthermore $u\infconv v > -\infty$, then
$$(u\infconv v)^* = u^* + v^*.$$

\medskip

We associate two convex functions with every convex body. By
$$\Ind_K(x)=\begin{cases} 0\quad &\text{if } x\in K\\
+\infty \quad &\text{if } x\not\in K\end{cases}
$$
we denote the \emph{(convex) indicator function} of $K\in\Kn$ and remark that $\Ind_K\in\CVc$ for every $K\in\Kn$ and furthermore $\Ind_K\in\CVcn$ if and only if $K\in\Knn$. Moreover, we write
$$h(K,x)=\max\nolimits_{y\in K} \langle x,y \rangle$$
for the \emph{support function} of $K\in\Kn$ at $x\in\Rn$. It is easy to see that support functions of convex bodies are uniquely determined by their values on $\Sph$. Furthermore, $h(K,\cdot)=(\Ind_K)^*$ and $h(K,\cdot)^*=\Ind_K$ for every $K\in\Kn$.

\medskip

We will need the following characterizations of coercive and super-coercive convex functions.

\begin{lemma}[\!\!\cite{rockafellar_wets}, Theorem 11.8]
\label{le:coercive_super_coercive_duals}
A function $u\in\CV$ is coercive if and only if $0\in\interior \dom u^*$. Furthermore, $u$ is super-coercive if and only if $\dom u^*=\Rn$.
\end{lemma}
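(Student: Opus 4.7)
The plan is to reduce both equivalences to a standard lower-bound/affine-minorant interpretation of the conjugate: for any $y\in\Rn$, $u^*(y)<+\infty$ is equivalent to the existence of $c\in\R$ with $u(x)\geq\langle x,y\rangle-c$ for all $x$. So the two statements amount to saying that coercivity (resp.\ super-coercivity) matches having affine minorants of all slopes in a neighborhood of $0$ (resp.\ everywhere).

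For the super-coercive case I would start with the easier direction. If $u$ is super-coercive and $y\in\Rn$ is fixed, picking $M>|y|$ gives $R$ with $u(x)\geq M|x|$ for $|x|>R$; then $\langle x,y\rangle-u(x)\leq(|y|-M)|x|+C$ on the complement of $B(0,R)$ and is bounded above on $B(0,R)$ by lower semicontinuity, hence $u^*(y)<+\infty$. Conversely, if $\dom u^*=\Rn$, then $u^*$ is a proper lsc convex function finite everywhere, hence continuous, so bounded on each sphere: $C_M:=\sup_{|y|=M}u^*(y)<+\infty$. For any $x\neq 0$ the choice $y=Mx/|x|$ yields $u(x)\geq M|x|-C_M$, and since $M$ is arbitrary we conclude $u(x)/|x|\to+\infty$.

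For the coercive case the converse direction is again comparatively easy: if $B(0,\e)\subset\dom u^*$, then $u^*$ is bounded above on, say, $\overline{B(0,\e/2)}$ by some $C$, and for any $x$ the choice $y=(\e/2)x/|x|$ gives $u(x)\geq(\e/2)|x|-C$, forcing $u(x)\to+\infty$. The main obstacle lies in the forward direction, where I expect coercivity to produce only the pointwise lower bound $u(x)\to+\infty$ rather than a linear one. To upgrade this, I would invoke the recession function $u^\infty(v)=\lim_{t\to+\infty}(u(x_0+tv)-u(x_0))/t$, which exists in $(-\infty,+\infty]$ for every direction by convexity (monotonicity of difference quotients) and is itself lower semicontinuous, sublinear, and positively homogeneous. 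Coercivity of $u$ forces $u^\infty(v)>0$ for every $v\in\Sph$: otherwise $u^\infty(v_0)\leq 0$ would, by monotonicity of the difference quotients, make $u$ bounded above on the ray $x_0+tv_0$, contradicting $u(x)\to+\infty$. Lower semicontinuity on the compact sphere then gives $a:=\inf_{v\in\Sph}u^\infty(v)>0$, and unwinding the definition produces $b\in\R$ with $u(x)\geq a|x|+b$ for all $x$.

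With this uniform affine-from-below bound in hand, the initial remark yields $u^*(y)<+\infty$ whenever $|y|<a$, hence $0\in\interior\dom u^*$, completing the coercive half. I expect the recession-function argument, and in particular the use of lower semicontinuity of $u^\infty$ on $\Sph$ to pass from pointwise positivity to a uniform lower bound, to be the only nontrivial ingredient; everything else is a direct manipulation of the definition of $u^*$.
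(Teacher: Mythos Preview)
The paper does not prove this lemma; it is cited from \cite{rockafellar_wets} without argument, so there is nothing to compare your approach against. Your proposal is essentially correct.

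The only soft spot is the ``unwinding'' step in the forward coercive direction: from $a:=\inf_{\Sph}u^\infty>0$ you cannot read off $u(x)\geq a|x|+b$ ray by ray, because the difference quotient $(u(x_0+tv)-u(x_0))/t$ increases to $u^\infty(v)$ but may stay below any $a'<a$ up to a $v$-dependent threshold. A standard way to close this is to argue on the epigraph: if $x_k=x_0+t_kv_k$ with $t_k\to\infty$ and $v_k\in\Sph$ violated the bound $u(x_k)\geq u(x_0)+a'|x_k-x_0|$, then the bounded sequence $\big(v_k,(u(x_k)-u(x_0))/t_k\big)$ has a subsequential limit $(v,q)$ with $q\leq a'$, and since $(x_0,u(x_0))+t_k(v_k,q_k)\in\epi u$ with $\epi u$ closed and convex, the limit $(v,q)$ lies in the recession cone of $\epi u$, i.e.\ $u^\infty(v)\leq q\leq a'<a$, a contradiction. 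Alternatively, you can bypass the linear lower bound entirely via the identity $u^\infty=h(\cl\dom u^*,\cdot)$, so that $u^\infty\geq a|\cdot|=h(aB^n,\cdot)$ gives $aB^n\subseteq\cl\dom u^*$ and hence $0\in\interior\dom u^*$ directly. Either route completes your argument. Incidentally, the linear lower bound you extract is exactly Lemma~\ref{le:coercive_super_coercive}, which the paper likewise quotes from \cite{rockafellar_wets} without proof.
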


\begin{lemma}[\!\!\cite{rockafellar_wets}, Theorem 3.26]
\label{le:coercive_super_coercive}
A function $u\in\CV$ is coercive if and only if there exist $a>0$ and $b\in\R$ such that
\begin{equation}
\label{eq:cone_coercive_super_coercive}
u(x)\geq a|x|+b
\end{equation}
for every $x\in\Rn$. Furthermore, $u$ is super-coercive if and only if for every $a>0$ there exists $b\in\R$ such that \eqref{eq:cone_coercive_super_coercive} holds.
\end{lemma}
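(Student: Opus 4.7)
The plan is to prove both equivalences by combining the definition of the convex conjugate with the dual characterization in Lemma~\ref{le:coercive_super_coercive_duals}. I would split each of the two claims into its ``if'' and ``only if'' directions and handle them separately, since the sufficient directions are essentially immediate from the definitions while the necessary directions require some duality.

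For the sufficient directions, suppose first that $u(x)\geq a|x|+b$ for some $a>0$ and $b\in\R$. Then $u(x)\to+\infty$ as $|x|\to\infty$, so $u$ is coercive. If in fact such a bound holds for \emph{every} $a>0$, then for any $a>0$ we get $\liminf_{|x|\to\infty} u(x)/|x|\geq a$, and letting $a\to\infty$ yields super-coercivity. These observations require no further tools beyond the definitions.

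For the necessary direction in the coercive case, I would use Lemma~\ref{le:coercive_super_coercive_duals} to deduce that $0\in\interior\dom u^*$, so there is some $r>0$ with $r B^n\subseteq \dom u^*$. Since $u^*\in\CV$ is finite on an open neighborhood of $r B^n$, a standard fact about convex functions ensures that $u^*$ is continuous and hence bounded above by some constant $M$ on $r B^n$. From the definition of the conjugate, for every $x\in\Rn$ and every $y\in r B^n$ we have
\begin{equation*}
u(x)\geq \langle x,y\rangle - u^*(y)\geq \langle x,y\rangle - M.
\end{equation*}
Taking the supremum over $y\in r B^n$ on the right-hand side produces $u(x)\geq r|x|-M$, which is the desired bound with $a=r$ and $b=-M$. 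In the super-coercive case, Lemma~\ref{le:coercive_super_coercive_duals} gives $\dom u^*=\Rn$, so $u^*$ is finite, convex, and thus continuous on all of $\Rn$. For any fixed $a>0$ the quantity $M_a:=\max_{|y|\leq a} u^*(y)$ is finite, and the same supremum argument applied over $y\in a B^n$ yields $u(x)\geq a|x|-M_a$, completing the proof.

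The main technical point to verify carefully is the passage from ``$u^*$ finite on $r B^n$'' to ``$u^*$ bounded above on $r B^n$'': one needs $r B^n$ to sit inside the \emph{interior} of $\dom u^*$ (not just inside $\dom u^*$), which is why I would shrink $r$ slightly if necessary so that continuity of $u^*$ on a neighborhood of the closed ball applies. Everything else is routine manipulation of the Legendre--Fenchel duality.
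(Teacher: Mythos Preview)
The paper does not supply its own proof of this lemma; it is simply quoted from \cite{rockafellar_wets}, Theorem~3.26, so there is nothing to compare your argument against directly. Your proof is correct: the sufficient directions are immediate, and for the necessary directions your use of Lemma~\ref{le:coercive_super_coercive_duals} together with the Fenchel--Young inequality $u(x)\geq \langle x,y\rangle - u^*(y)$ and a supremum over $y\in rB^n$ (respectively $y\in aB^n$) is a clean and standard route. Your remark about shrinking $r$ so that $rB^n\subset\interior\dom u^*$, ensuring continuity and hence boundedness of $u^*$ there, is exactly the right technical point.

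One small caveat worth noting: in \cite{rockafellar_wets} itself, the dual characterization (Theorem~11.8, the source of Lemma~\ref{le:coercive_super_coercive_duals}) appears later than Theorem~3.26 and may well depend on it, so your derivation is likely not the logical order used in that reference. Within the present paper, however, both lemmas are taken as black boxes from \cite{rockafellar_wets}, so deriving one from the other is entirely legitimate.
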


\subsection{Epi-Convergence}
\label{subse:epi_conv}

As described in the monograph of Rockafellar and Wets \cite{rockafellar_wets}, there are natural ways to define metrics that induce epi-convergence. This comes from the fact that epi-convergence is equivalent to a set-type convergence of the corresponding epigraphs.

For two closed sets $C,D\subseteq \R^{n+1}$ and $\rho \geq 0$, let
$$d_{\rho}(C,D)=\max\nolimits_{|x|\leq \rho} \left|\min\nolimits_{y\in C} |x-y| - \min\nolimits_{y\in D}|x-y|\right|.$$
and let
$$d(C,D)=\int_0^{+\infty} d_{\rho}(C,D)e^{-\rho} \d \rho.$$

\begin{theorem}[\!\!\cite{rockafellar_wets}, Theorem 4.42, Theorem 7.58]
For $\rho \geq 0$, $d_\rho$ defines a pseudo-metric, while $d$ defines a metric on the set $\{C\subset \R^{n+1}\,:\, C \text{ is closed, } C \neq \emptyset\}$. This metric space is complete. Furthermore, for every $u_k,u\in\CV$ we have
$$u_k\eto u \quad \Longleftrightarrow \quad d(\epi u_k,\epi u)\to 0$$
as $k\to\infty$.
\end{theorem}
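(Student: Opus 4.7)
The plan is to reduce everything to the associated distance functions $\delta_C(x):=\min_{y\in C}|x-y|$ of nonempty closed sets $C\subseteq\R^{n+1}$. The key observation is that $d_\rho(C,D)$ is nothing but the supremum norm of $\delta_C-\delta_D$ over the closed ball of radius $\rho$ around the origin. Symmetry, non-negativity and the triangle inequality for $d_\rho$ then follow immediately, while failure of the separation axiom is witnessed by any pair of distinct closed sets that happen to agree inside $B(0,\rho)$. For the metric $d$, I would first use that each $\delta_C$ is $1$-Lipschitz to obtain a linear bound of the form $|\delta_C(x)-\delta_D(x)|\leq |x|+\delta_C(0)+\delta_D(0)$, so that the integral defining $d$ converges thanks to the factor $e^{-\rho}$; symmetry and the triangle inequality for $d$ are inherited by integration. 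Identity of indiscernibles follows from the monotonicity of $\rho\mapsto d_\rho(C,D)$: if the integral vanishes then $d_\rho(C,D)=0$ for every $\rho\ge 0$, hence $\delta_C\equiv\delta_D$ on $\R^{n+1}$, and therefore $C=D$ by closedness.

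For completeness, given a $d$-Cauchy sequence $(C_k)$, I would translate the Cauchy condition back into $(\delta_{C_k})$ being Cauchy in the topology of uniform convergence on compacts. Thus there is a $1$-Lipschitz limit function $\delta$, and the natural candidate is $C:=\{x\in\R^{n+1}:\delta(x)=0\}$. One then verifies that $C$ is nonempty and closed, that $\delta=\delta_C$ on all of $\R^{n+1}$, and finally that $d_\rho(C_k,C)\to 0$ for each fixed $\rho\ge 0$, so that dominated convergence in $\rho$ (with majorant of the form $\mathrm{const}\cdot(\rho+1)e^{-\rho}$) yields $d(C_k,C)\to 0$.

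For the equivalence with epi-convergence, I would split the argument into two standard equivalences. First, $d$-convergence $C_k\to C$ in this space is equivalent to locally uniform convergence of the distance functions $\delta_{C_k}\to\delta_C$, which in turn is equivalent to Kuratowski--Painlev\'e set convergence of $(C_k)$ to $C$; the $1$-Lipschitz control on the $\delta_{C_k}$ is the tool that translates uniform convergence on compacts into the simultaneous inner-limit and outer-limit conditions. Second, I would invoke the classical identification of epi-convergence $u_k\eto u$ with Kuratowski--Painlev\'e convergence of the epigraphs $\epi u_k\to\epi u$. Composing the two equivalences produces the desired characterization. The hard part will be the first of these equivalences: showing that locally uniform convergence of distance functions implies the outer-limit inclusion (every accumulation point of sequences $x_k\in C_k$ must lie in $C$) and the inner-limit inclusion (every $x\in C$ is approachable by such a sequence) requires careful subsequence extractions, and one has to rule out pathologies such as $\delta_{C_k}(0)\to\infty$, which the Cauchy/convergence condition in $d$ does prevent but only by a quantitative argument that tracks how the weight $e^{-\rho}$ interacts with the linear growth of $d_\rho$.
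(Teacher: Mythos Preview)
The paper does not give its own proof of this statement: it is quoted verbatim as a known result from Rockafellar--Wets (Theorems~4.42 and~7.58) and used as background in Section~\ref{subse:epi_conv}. There is therefore nothing in the paper to compare your argument against.

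That said, your outline is precisely the standard route taken in the cited reference: rewrite $d_\rho$ as the sup-norm of $\delta_C-\delta_D$ on $\rho B^{n+1}$, exploit the $1$-Lipschitz property of distance functions for integrability and for completeness, and identify $d$-convergence with locally uniform convergence of distance functions, which in turn is equivalent to Painlev\'e--Kuratowski set convergence and hence to epi-convergence of the associated functions. One step you pass over quickly deserves a flag: showing that the locally uniform limit $\delta$ of the $\delta_{C_k}$ is again a distance function, i.e.\ $\delta=\delta_C$ for $C=\delta^{-1}(0)$, is not automatic for arbitrary nonnegative $1$-Lipschitz functions and uses that each $\delta_{C_k}$ actually attains the value $0$ together with a short approximation argument. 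Apart from that, your plan is correct and matches the source the paper defers to.
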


On $\CVc$, epi-convergence is also closely related to Hausdorff-convergence of the level sets. In the following we say that $\{u_k\leq t \} \to \emptyset$ as $k\to\infty$ if there exists $k_0\in\N$ such that $\{u_k\leq t \}=\emptyset$ for all $k\geq k_0$.

\begin{lemma}[\!\!\cite{colesanti_ludwig_mussnig}, Lemma 5 and \cite{beer_rockafellar_wets}, Theorem 3.1]
\label{le:hd_conv_lvl_sets}
Let $u_k,u\in\CVc$. If $u_k\eto u$, then $\{u_k\leq t \}\to \{u\leq t\}$ as $k\to\infty$ for every $t\neq \min_{x\in\Rn} u(x)$. Conversely, if for every $t\in\R$ there exists a sequence $t_k\to t$ such that $\{u_k\leq t_k\}\to \{u\leq t\}$, then $u_k\eto u$ as $k\to\infty$.
\end{lemma}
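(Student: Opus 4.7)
The strategy is to pass back and forth between epi-convergence (a convergence at the level of functions) and Hausdorff convergence of the sublevel sets (a convergence at the level of convex bodies), using the liminf/limsup characterization of epi-convergence directly together with coercivity: Lemma~\ref{le:coercive_super_coercive} produces affine minorants that, upgraded to a form uniform in $k$, will confine all relevant sublevel sets to a common bounded set so that mere set-theoretic convergence can be promoted to Hausdorff convergence.

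For the forward direction, fix $u_k \eto u$ and $t \neq \min u$. If $t < \min u$, suppose towards a contradiction that some subsequence contains points $x_{k_j} \in \{u_{k_j} \leq t\}$. Combining Lemma~\ref{le:coercive_super_coercive} with the epi-convergence at a minimizer of $u$ produces a common affine minorant $u_k(x) \geq a|x|+b$ for large $k$, which bounds the $x_{k_j}$ inside a fixed ball; extracting a convergent subsequence $x_{k_j} \to x$ and invoking the liminf inequality gives $u(x) \leq \liminf u_{k_j}(x_{k_j}) \leq t < \min u$, a contradiction. Hence $\{u_k \leq t\} = \emptyset$ eventually. If $t > \min u$, the two set-theoretic inclusions required for Hausdorff convergence are verified as follows. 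The inclusion $\limsup_k \{u_k \leq t\} \subseteq \{u \leq t\}$ follows immediately from the liminf inequality: any cluster point $x$ of a sequence $x_k \in \{u_k \leq t\}$ satisfies $u(x) \leq \liminf_k u_k(x_k) \leq t$. For the reverse inclusion, any $x$ with $u(x) < t$ admits, by the limsup part of epi-convergence, a recovery sequence $x_k \to x$ with $u_k(x_k) \leq t$ eventually, so $x \in \liminf_k \{u_k \leq t\}$; since $t > \min u$ we have $\{u \leq t\} = \cl \{u < t\}$, and as the lower limit is always closed, the inclusion extends to all of $\{u \leq t\}$. Uniform boundedness, again via the common affine minorant, upgrades these inclusions to Hausdorff convergence.

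For the converse, assume the threshold-sequence hypothesis and verify the two defining conditions of epi-convergence at an arbitrary $x \in \Rn$. For the limsup inequality with $u(x) = s < \infty$, choose $t_k \to s$ with $\{u_k \leq t_k\} \to \{u \leq s\}$; Hausdorff convergence provides $x_k \in \{u_k \leq t_k\}$ with $x_k \to x$, and then $u_k(x_k) \leq t_k \to s = u(x)$ yields the bound (the case $u(x) = +\infty$ is trivial with $x_k = x$). For the liminf inequality, suppose for contradiction that some $x_k \to x$ have $\liminf_k u_k(x_k) =: L < u(x)$, pick $s \in (L, u(x))$ (or any $s > L$ if $u(x) = +\infty$), and let $t_k \to s$ witness the hypothesis. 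Along a subsequence realizing $L$, $u_{k_j}(x_{k_j}) < s$ and $t_{k_j} \to s$, so $x_{k_j} \in \{u_{k_j} \leq t_{k_j}\}$ eventually; Hausdorff convergence then forces $x \in \{u \leq s\}$, contradicting $s < u(x)$.

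The main obstacle I expect is the lower inclusion in the forward direction at levels $t > \min u$: extending the inclusion from points of the strict sublevel $\{u < t\}$, which are handled by the recovery sequence, to all of $\{u \leq t\}$ is precisely what fails at $t = \min u$ and explains the exclusion of that level. A secondary delicate point is establishing uniform boundedness of all the sublevel sets $\{u_k \leq t\}$, needed to turn set-theoretic inclusions into Hausdorff convergence, which requires extracting an affine minorant from the epi-convergent sequence that is uniform in $k$.
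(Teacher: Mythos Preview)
The paper does not prove this lemma; it is quoted from \cite{colesanti_ludwig_mussnig} and \cite{beer_rockafellar_wets}, so there is no in-paper proof to compare against. Your argument is the standard direct verification via the liminf/limsup definition of epi-convergence and Kuratowski convergence of sublevel sets, and it is correct.

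Two remarks. First, the uniform affine minorant you invoke (``Combining Lemma~\ref{le:coercive_super_coercive} with the epi-convergence at a minimizer of $u$'') is precisely Lemma~\ref{le:uniform_cone} in the paper, itself cited from \cite{colesanti_ludwig_mussnig}; your sketch of how to obtain it is a bit vague, but the result is available and may simply be invoked. Second, your key step $\{u\leq t\}=\cl\{u<t\}$ for $t>\min u$ is exactly where convexity and lower semicontinuity of $u$ enter and is what singles out the excluded level $t=\min u$; you identify this correctly. In the converse liminf argument you should also note that if the chosen $s$ satisfies $s<\min u$ then $\{u\leq s\}=\emptyset$, so by the convention in the paper the hypothesis forces $\{u_{k_j}\leq t_{k_j}\}=\emptyset$ eventually, which already contradicts $x_{k_j}\in\{u_{k_j}\leq t_{k_j}\}$; the case $s\geq \min u$ proceeds as you wrote.
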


We will also need the following basic facts about epi-convergence.

\begin{lemma}[\!\!\cite{rockafellar_wets}, Proposition 7.4 and Theorem 7.17]
\label{le:epi_lims_lsc_cvx}
If $u_k\in\CV$ is epi-convergent to some function $u:\Rn\to\R\cup\{+\infty,-\infty\}$, then also $u$ is lower semicontinuous and convex.
\end{lemma}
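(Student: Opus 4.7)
The plan is to translate the defining properties of epi-convergence into Painlev\'e--Kuratowski set-convergence of the epigraphs, and then deduce the two conclusions from the standard equivalences that a function is lower semicontinuous if and only if its epigraph is closed, and convex if and only if its epigraph is convex.

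First I would show that $u_k \eto u$ is equivalent to $\epi u = \liminf_k \epi u_k = \limsup_k \epi u_k$, where the right-hand side denotes the inner and outer set-limits in $\R^{n+1}$. The liminf condition rephrases as follows: whenever $(x_{k_j}, t_{k_j}) \in \epi u_{k_j}$ with $(x_{k_j}, t_{k_j}) \to (x,t)$ along some subsequence, then $t_{k_j} \geq u_{k_j}(x_{k_j})$ forces $t \geq \liminf_j u_{k_j}(x_{k_j}) \geq u(x)$, so $(x,t) \in \epi u$; this is precisely $\limsup_k \epi u_k \subseteq \epi u$. The limsup condition, namely the existence of a recovery sequence $x_k \to x$ with $\limsup_k u_k(x_k) \leq u(x)$, applied to $(x,t) \in \epi u$ produces $(x_k, t_k) \in \epi u_k$ (take $t_k = \max\{u_k(x_k), t\}$) converging to $(x,t)$, giving $\epi u \subseteq \liminf_k \epi u_k$. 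Since $\liminf \subseteq \limsup$ is tautological, equality follows.

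For lower semicontinuity I would invoke that the outer limit $\limsup_k \epi u_k$ is always a closed subset of $\R^{n+1}$: if $p^{(m)} \to p$ and each $p^{(m)}$ is the limit along some subsequence of points $q_j^{(m)} \in \epi u_{k_j^{(m)}}$, a diagonal extraction yields indices $k_m \to \infty$ and $q_m \in \epi u_{k_m}$ with $q_m \to p$, so $p \in \limsup_k \epi u_k$. Hence $\epi u$ is closed, and $u$ is lower semicontinuous. For convexity I would use the inner limit side: given $p, q \in \liminf_k \epi u_k$ and $\lambda \in [0,1]$, pick $p_k, q_k \in \epi u_k$ with $p_k \to p$ and $q_k \to q$; convexity of each $u_k$ gives $\lambda p_k + (1-\lambda) q_k \in \epi u_k$, and passing to the limit places $\lambda p + (1-\lambda) q$ in $\liminf_k \epi u_k = \epi u$. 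Thus $\epi u$ is convex, so $u$ is convex.

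The only mildly delicate ingredient is the translation step between epi-convergence and Painlev\'e--Kuratowski convergence of epigraphs; once that is in hand, the two conclusions follow from purely set-theoretic facts. Note that no issue arises from $u$ possibly taking the values $\pm\infty$: the epigraph characterizations of lower semicontinuity and convexity remain valid in this extended-real setting, and the argument above never requires $u$ to be proper.
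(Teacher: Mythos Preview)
The paper does not supply its own proof of this lemma; it simply records the statement and cites \cite[Proposition~7.4 and Theorem~7.17]{rockafellar_wets}. Your argument is correct and is in fact the standard route taken in that reference: one identifies epi-convergence with Painlev\'e--Kuratowski convergence of epigraphs, then reads off lower semicontinuity from the closedness of the outer set-limit and convexity from the convexity of the inner set-limit. Two minor points worth tightening: in the inclusion $\limsup_k \epi u_k \subseteq \epi u$ you pass from a subsequence $x_{k_j}\to x$ to the liminf condition, which is stated for full sequences; this is handled by extending $(x_{k_j})$ to a full sequence equal to $x$ off the subsequence, and then using that the liminf along a subsequence dominates the liminf of the full sequence. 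In the recovery-sequence step, $t_k=\max\{u_k(x_k),t\}$ is only defined once $u_k(x_k)<+\infty$, which holds for all large $k$ since $\limsup_k u_k(x_k)\le u(x)<+\infty$; for the finitely many remaining indices one may pick any point of $\epi u_k$ (nonempty because $u_k$ is proper). With these clarifications the proof is complete.
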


\begin{lemma}[\!\!\cite{rockafellar_wets}, Theorem 7.17]
\label{le:epi_conv_equiv}
Let $u_k,u\in\CVc$. If $u\in\CVcn$, then the following are equivalent:
\begin{itemize}
	\item $u_k\eto u$,
	\item there exists a dense subset $D\subseteq \Rn$ such that $u_k(x)\to u(x)$  for every $x\in D$,
	\item $u_k$ converges to $u$ uniformly on every compact set $C\subset \Rn$ such that $C\cap \bd \dom u=\emptyset$.
\end{itemize}
\end{lemma}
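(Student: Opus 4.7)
The strategy is to close the cycle $(3) \Rightarrow (2) \Rightarrow (1) \Rightarrow (3)$. The first implication is immediate: set $D := \Rn \setminus \bd \dom u$, which is dense in $\Rn$ since $\dom u$ is full-dimensional and hence its boundary has empty interior, and apply (3) to each compact singleton $\{x\}$ with $x \in D$ (disjoint from $\bd \dom u$) to obtain $u_k(x) \to u(x)$.

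For $(1) \Rightarrow (3)$, I would invoke Lemma~\ref{le:hd_conv_lvl_sets} to pass from epi-convergence to Hausdorff convergence $\{u_k \leq t\} \to \{u \leq t\}$ for every $t \neq \min u$. For a compact $C \subset \interior \dom u$, continuity of $u$ on $\interior \dom u$ supplies levels $t_1 < t_2$ with $t_1 < u < t_2$ on $C$; Hausdorff convergence then yields $C \subset \{u_k \leq t_2\}$ together with $C \cap \{u_k \leq t_1\} = \emptyset$ for large $k$, and local equicontinuity of uniformly locally bounded convex functions combined with the pointwise convergence on $\interior \dom u$ that epi-convergence provides upgrades this to uniform convergence. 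For compact $C$ with $C \cap \cl \dom u = \emptyset$, we have $d(C, \cl \dom u) > 0$, so Hausdorff convergence of $\{u_k \leq M\}$ to $\{u \leq M\} \subset \cl \dom u$ forces $C \cap \{u_k \leq M\} = \emptyset$ eventually for every $M$, i.e., $u_k \to +\infty$ uniformly on $C$.

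The substantial implication is $(2) \Rightarrow (1)$. The key initial step is the classical fact that a sequence of convex functions converging pointwise on a dense subset of an open convex set $\Omega$ to a finite continuous limit converges uniformly on compact subsets of $\Omega$: upper bounds come from enclosing each point in a simplex with vertices in $D$, and convexity supplies the local equicontinuity needed to conclude. Applied with $\Omega = \interior \dom u$, this yields uniform convergence on compact subsets of $\interior \dom u$. The limsup condition is then trivial at $x \in \interior \dom u$ (take $x_k = x$) and at $x$ with $u(x) = +\infty$ (again $x_k = x$); at $x \in \bd \dom u \cap \dom u$, I approach $x$ along a segment from a fixed $y \in \interior \dom u$, using the one-dimensional fact $u((1-t)y + tx) \to u(x)$ as $t \to 1^-$ (convexity plus lsc) together with a diagonal argument against the interior uniform convergence. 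The liminf condition is immediate on $\interior \dom u$; at $x \in \cl \dom u$ (including boundary points whether or not in $\dom u$) a contradiction argument works: assuming $\liminf u_k(x_k) =: c < u(x)$, the open segment $(y, x)$ lies in $\interior \dom u$, so $z_k(t) := (1-t)y + t x_k \to z(t) \in \interior \dom u$ for $t \in (0, 1)$; the convexity inequality $u_k(z_k(t)) \leq (1-t) u_k(y) + t u_k(x_k)$ passes via interior uniform convergence to $u(z(t)) \leq (1-t) u(y) + tc$, and $t \to 1^-$ forces $u(x) \leq c$ by lsc.

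The main obstacle is the liminf condition at $x \notin \cl \dom u$: here the open segment $(y, x)$ exits $\cl \dom u$ at some $z(t^{**})$ with $t^{**} < 1$, so the preceding argument controls $z(t)$ only for $t \in [0, t^{**})$. If $z(t^{**}) \in \bd \dom u \setminus \dom u$, lsc of $u$ at $z(t^{**})$ completes the contradiction directly; otherwise $z(t^{**}) \in \bd \dom u \cap \dom u$, and my plan is to exploit density of $D$ to choose $w \in D$ in the open set $\Rn \setminus \cl \dom u$, so that $u_k(w) \to +\infty$, and then re-route the argument through $w$: by a projection/perturbation construction, express $x_k$ as an approximate convex combination of $y$ and $w$ (or a nearby point of $D$) and rearrange the convexity inequality into a lower bound on $u_k(x_k)$ in terms of the blow-up of $u_k$ near $w$. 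Controlling this transfer robustly in $k$, in particular ensuring that the auxiliary convex combinations do not fall back into $\dom u$, is the delicate step, and it is here that the full-dimensionality of $\dom u$ (which guarantees ample room in the complement of $\cl \dom u$ in every direction) becomes essential.
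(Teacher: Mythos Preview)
The paper does not give its own proof of this lemma; it is quoted directly from \cite{rockafellar_wets}, Theorem~7.17, so there is nothing in the paper to compare your argument against.

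On its own merits, your cycle $(3)\Rightarrow(2)\Rightarrow(1)\Rightarrow(3)$ is sound, and the sketches for $(3)\Rightarrow(2)$, $(1)\Rightarrow(3)$, and most of $(2)\Rightarrow(1)$ can be completed as written. The genuine gap is exactly where you flag it, the liminf condition at $x\notin\cl\dom u$, but your proposed repair has the convexity inequality pointing the wrong way. If you express $x_k$ as a convex combination of $y\in\interior\dom u$ and a point $w$ with $u_k(w)\to+\infty$, convexity yields only the \emph{upper} bound $u_k(x_k)\le(1-\lambda)u_k(y)+\lambda u_k(w)$, and there is no way to rearrange this into a lower bound on $u_k(x_k)$. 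You need the roles reversed: $w$ must be the convex combination, not $x_k$.

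A clean fix: choose $\rho>0$ with $\overline{B(y,\rho)}\subset\interior\dom u$ and set $K:=\conv\big(B(y,\rho)\cup\{x\}\big)$. Points of the open segment $(y,x)$ sufficiently close to $x$ lie in $\interior K\cap(\Rn\setminus\cl\dom u)$, which is open and nonempty, so by density pick $w\in D$ there; then $u_k(w)\to u(w)=+\infty$. Since $x_k\to x$, for large $k$ one has $w\in\conv\big(B(y,\rho)\cup\{x_k\}\big)$, hence $w=(1-t_k)b_k+t_k x_k$ with $b_k\in B(y,\rho)$ and $t_k\in(0,1)$ bounded away from $0$ (because $w\notin B(y,\rho)$ and the geometry is stable as $x_k\to x$). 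Convexity now gives
\[
u_k(x_k)\ \ge\ \frac{u_k(w)-(1-t_k)\,u_k(b_k)}{t_k}\ \longrightarrow\ +\infty,
\]
since $u_k(b_k)$ stays bounded by the already-established uniform convergence on $\overline{B(y,\rho)}$. With this correction your argument goes through.
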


\begin{remark}
\label{re:ptw_unif_conv_cvx}
We remark, that if a sequence $u_k\in\CV$ converges pointwise to a function $u\in\CVcn$, then the sequence already converges uniformly on every compact subset of $\interior\dom u$.
\end{remark}

\begin{theorem}[\!\!\cite{rockafellar_wets}, Theorem 11.34]
\label{thm:epi_conv_conjugate}
If $u_k,u\in\CV$, then $u_k\eto u$ if and only if $u_k^*\eto u^*$ as $k\to\infty$.
\end{theorem}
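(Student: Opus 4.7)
Plan: By the involution $u^{**}=u$ on $\CV$, it suffices to prove the forward implication $u_k \eto u \implies u_k^* \eto u^*$; the converse follows by applying the same implication to the sequence of conjugates. I would then verify the $\liminf$- and $\limsup$-conditions of epi-convergence for $(u_k^*)$ separately.

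For the $\liminf$-condition, fix $y\in\Rn$ and let $y_k\to y$. For any $x\in\Rn$, the limsup-part of $u_k\eto u$ supplies a sequence $x_k \to x$ with $\limsup_k u_k(x_k)\le u(x)$. From $u_k^*(y_k)\ge \langle x_k,y_k\rangle - u_k(x_k)$ one obtains $\liminf_k u_k^*(y_k)\ge \langle x,y\rangle - u(x)$, and taking the supremum over $x$ yields $\liminf_k u_k^*(y_k)\ge u^*(y)$. The case $u(x)=+\infty$ contributes $-\infty$ and can be ignored.

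For the $\limsup$-condition, one must produce a recovery sequence $y_k\to y$ with $\limsup_k u_k^*(y_k)\le u^*(y)$. This is the main obstacle, since $u^*(y)$ is itself a supremum and approximating one near-maximizer does not yield the required global upper bound. My plan is to reformulate epi-convergence as Kuratowski--Painlev\'e convergence of epigraphs in $\R^{n+1}$ and exploit polar duality. Concretely, associate with each $u\in\CV$ the closed convex cone $K_u\subseteq \R^{n+2}$ obtained as the closure of the conic hull of $\epi u \times \{1\}$. Kuratowski--Painlev\'e convergence $\epi u_k \to \epi u$ translates, after accounting for recession directions, to $K_{u_k}\to K_u$. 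The classical continuity of polarity for closed convex cones in finite dimensions then gives $K_{u_k}^\circ \to K_u^\circ$, and the slice of $K_u^\circ$ with $\{(y,s,r):r=-1\}$ recovers $\epi u^*$ (and similarly for each $u_k^*$). Hence $\epi u_k^* \to \epi u^*$ in the Kuratowski--Painlev\'e sense, which is exactly $u_k^* \eto u^*$.

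The main technical difficulty I anticipate is tracking the compatibility between Kuratowski--Painlev\'e convergence of the epigraphs and that of the homogenized cones, in particular the behavior of recession rays for non-coercive $u$. Once this geometric correspondence is in place, the polarity step is standard and follows from the bipolar theorem together with classical set-convergence results for polar cones. A more abstract route, which essentially packages the same content, is to use the identity between epi-convergence and Mosco convergence in finite dimensions together with Mosco's theorem on stability of the Legendre--Fenchel transform under Mosco convergence.
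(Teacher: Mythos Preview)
The paper does not supply a proof of this theorem; it merely cites it as Theorem~11.34 in Rockafellar--Wets, so there is no ``paper's own proof'' to compare against. Your plan is therefore more ambitious than what the paper does.

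As for the content of your proposal: the reduction to one implication via biconjugation is correct, and your $\liminf$-argument is complete and standard. For the $\limsup$-part you rightly identify the real difficulty. The route via homogenized cones and polar duality is a valid strategy---this is essentially the Walkup--Wets continuity theorem for polar cones (also recorded in Rockafellar--Wets), and the correspondence between $\epi u^*$ and a slice of $K_u^\circ$ is exactly the classical ``perspective cone'' picture. Your caveat about recession rays is the right place to be careful: one must show that Painlev\'e--Kuratowski convergence of $\epi u_k$ implies the same for the closed conic hulls, and here one uses that $u\in\CV$ is proper and lower semicontinuous so that no spurious rays in $\{r=0\}$ appear in the limit that are not already accounted for by the recession function. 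The alternative via Mosco convergence (which in finite dimensions coincides with epi-convergence for convex functions) together with Mosco's stability theorem for the Legendre--Fenchel transform is equally legitimate and arguably cleaner. Either way, your outline is sound; just be aware that neither route is entirely elementary and each ultimately relies on a nontrivial set-convergence stability result.
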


The following lemma tells us that the epigraphs of a epi-convergent sequence in $\CVc$ are contained in a pointed cone. Hence, one may also call this a \emph{uniform cone property}.
\begin{lemma}[\!\!\cite{colesanti_ludwig_mussnig}, Lemma 8]
\label{le:uniform_cone}
If $u_k,u\in\CVc$ are such that $u_k\eto u$ as $k\to\infty$, then there exist $a>0$ and $b\in\R$ such that
$$u_k(x)\geq a|x|+b\quad \text{and} \quad u(x)\geq a|x|+b$$
for every $x\in\Rn$ and $k\in\N$.
\end{lemma}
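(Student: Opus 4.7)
The plan is to pass to convex conjugates, exploit the dual characterization of coercivity, and then dualize back. By Theorem~\ref{thm:epi_conv_conjugate} we have $u_k^* \eto u^*$, and by Lemma~\ref{le:coercive_super_coercive_duals} the coercivity of $u$ is equivalent to $0 \in \interior \dom u^*$. The identity driving everything is that any upper bound of the form $u_k^*(y) \leq M$ valid for all $y \in B(0,a)$ yields, via $u_k = u_k^{**}$,
$$u_k(x) = \sup_{y\in\Rn}\big(\langle x,y\rangle - u_k^*(y)\big) \geq a|x| - M$$
(take $y = a\, x/|x|$, or $y=0$ if $x=0$), and likewise for $u$. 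Thus it will suffice to find $a > 0$ and $M \in \R$ such that $u_k^* \leq M$ on $B(0, a)$ for all sufficiently large $k$; the limit $u$ and the finitely many initial $u_k$ can be absorbed at the end.

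For this step I would fix $\rho > 0$ with $B(0, 2\rho) \subset \dom u^*$; since $u^*$ is convex and finite on this open ball it is continuous there, so there exists $M_0$ with $u^* \leq M_0$ on $B(0, \rho)$. Next I would pick vertices $y_0, \ldots, y_n$ of a non-degenerate simplex $S \subset B(0, \rho)$ with $0 \in \interior S$, and fix $r > 0$ with $B(0, 2r) \subset \interior S$. The $\limsup$-part of epi-convergence supplies, for each $j$, a sequence $y_j^{(k)} \to y_j$ with $\limsup_k u_k^*(y_j^{(k)}) \leq u^*(y_j) \leq M_0$. I would then choose $k_0$ large enough so that for every $k \geq k_0$ one has $u_k^*(y_j^{(k)}) \leq M_0 + 1$ for all $j$ and also $B(0, r) \subseteq \conv\{y_0^{(k)}, \ldots, y_n^{(k)}\}$ (the latter by Hausdorff continuity of the convex-hull operation under vertex perturbation). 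For any $y \in B(0, r)$ one can then write $y = \sum_j \lambda_j y_j^{(k)}$ with $\lambda_j \geq 0$ and $\sum \lambda_j = 1$, and the convexity of $u_k^*$ immediately gives $u_k^*(y) \leq M_0 + 1$.

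Dualizing as above delivers the uniform cone bound $u_k(x) \geq r|x| - (M_0+1)$ for all $k \geq k_0$, and a bound of the same shape for $u$ itself. The finitely many indices $k < k_0$ can be handled by applying Lemma~\ref{le:coercive_super_coercive} to each $u_k$ individually, obtaining $a_k > 0$ and $b_k \in \R$ with $u_k(x) \geq a_k|x| + b_k$, and then setting
$$a := \min\{r, a_1, \ldots, a_{k_0-1}\} > 0, \qquad b := \min\{-(M_0+1), b_1, \ldots, b_{k_0-1}\}.$$
The main obstacle I anticipate is the simplex step: epi-convergence only supplies $\limsup$-control of $u_k^*$ along a single approximating sequence per vertex, and it is precisely the convexity of each $u_k^*$ that lets one propagate these isolated bounds to an entire neighborhood of the origin uniformly in $k$.
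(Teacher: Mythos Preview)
The paper does not supply its own proof of this lemma; it is quoted verbatim as Lemma~8 from \cite{colesanti_ludwig_mussnig}. Your argument via conjugates is correct: epi-convergence transfers to $u_k^*\eto u^*$ by Theorem~\ref{thm:epi_conv_conjugate}, the simplex trick propagates the $\limsup$-bounds at finitely many vertices to a uniform bound on a ball about the origin (this is exactly where convexity of each $u_k^*$ is essential, as you note), and biconjugation turns that into the desired cone. The handling of the finitely many initial indices via Lemma~\ref{le:coercive_super_coercive} is routine.

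For comparison, the proof in the cited reference proceeds on the primal side, using Hausdorff convergence of sublevel sets (essentially Lemma~\ref{le:hd_conv_lvl_sets}) to trap $\{u_k\le t\}$ in a fixed ball for two different levels and large $k$, and then extracting the cone from convexity. Your dual route is arguably cleaner in that it avoids the case analysis on level sets and makes the role of $0\in\interior\dom u^*$ transparent; the level-set approach, on the other hand, stays entirely within the toolkit already assembled in Section~\ref{subse:epi_conv} and does not invoke Theorem~\ref{thm:epi_conv_conjugate}.
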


We conclude this section with some functionals on $\CVc$ that are continuous with respect to epi-convergence.

\begin{lemma}[\!\!\cite{colesanti_ludwig_mussnig}, Lemma 12]
\label{le:min_cont}
If $u_k,u\in\CVc$ are such that $u_k\eto u$, then\linebreak$\min_{x\in\Rn} u_k(x) \to \min_{x\in\Rn}u(x)$ as $k\to\infty$.
\end{lemma}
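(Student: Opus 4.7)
The plan is to prove the two inequalities $\limsup_{k\to\infty} m_k \leq m$ and $\liminf_{k\to\infty} m_k \geq m$, where $m_k = \min_{x\in\Rn} u_k(x)$ and $m = \min_{x\in\Rn} u(x)$. Note that these minima are attained since the functions in $\CVc$ are lower semicontinuous and coercive.

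For the $\limsup$ inequality, I would pick any $x_0\in\Rn$ with $u(x_0) = m$. By the second condition in the definition of epi-convergence, there exists a sequence $x_k \to x_0$ with $\limsup_{k\to\infty} u_k(x_k) \leq u(x_0) = m$. Since $m_k \leq u_k(x_k)$ for every $k$, this immediately yields $\limsup_{k\to\infty} m_k \leq m$.

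For the $\liminf$ inequality, let $x_k\in\Rn$ be a minimizer of $u_k$, so that $u_k(x_k)=m_k$. The main obstacle here is to prevent the minimizers from escaping to infinity; this is precisely where the uniform cone property of Lemma~\ref{le:uniform_cone} enters. It provides $a>0$ and $b\in\R$ with $u_k(x)\geq a|x|+b$ for all $x\in\Rn$ and $k\in\N$. In particular, $m_k\geq b$, so $\liminf_{k\to\infty} m_k > -\infty$. Suppose for contradiction that $\liminf_{k\to\infty} m_k =: L < m$. Passing to a subsequence (not relabelled), assume $m_k \to L$. From $a|x_k|+b \leq u_k(x_k) = m_k$ we see that $(x_k)_k$ is bounded, so a further subsequence satisfies $x_k \to x^\ast$ for some $x^\ast\in\Rn$. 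Applying the first property of epi-convergence along this subsequence gives
$$u(x^\ast) \leq \liminf_{k\to\infty} u_k(x_k) = L < m,$$
contradicting the definition of $m$. Hence $\liminf_{k\to\infty} m_k \geq m$, and combining both inequalities yields $m_k\to m$.

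The only non-routine ingredient is controlling the location of the minimizers $x_k$; without a uniform coercivity bound the minimizers could drift off to infinity and the first epi-convergence inequality would be useless. Lemma~\ref{le:uniform_cone} removes exactly this obstacle, so the rest of the argument is a standard subsequence extraction.
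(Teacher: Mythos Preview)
Your argument is correct. The paper itself does not supply a proof of this lemma; it is quoted from \cite{colesanti_ludwig_mussnig} (their Lemma~12) without proof, so there is nothing in the present paper to compare against. Your approach---handling the $\limsup$ direction directly via the recovery-sequence part of the definition of epi-convergence, and the $\liminf$ direction by invoking the uniform cone bound of Lemma~\ref{le:uniform_cone} to confine the minimizers $x_k$ to a bounded set before extracting a convergent subsequence and applying the lower-bound part of the definition---is the standard one and is self-contained modulo results already stated in the paper. One minor remark: you implicitly use that epi-convergence is inherited by subsequences, which is immediate from the definition but worth stating once.
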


\begin{lemma}[\!\!\cite{colesanti_ludwig_mussnig}, Lemma 14]
\label{le:int_zeta_finite}
Let $\zeta:\R\to[0,\infty)$ be continuous. For every $u\in\CVc$
$$\int_{\dom u} \zeta(u(x)) \d x < +\infty$$
if and only if $\zeta$ has finite moment of order $n-1$.
\end{lemma}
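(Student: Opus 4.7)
I will express the integral as a one-dimensional Lebesgue--Stieltjes integral against the distribution function $V(t):=V_n(\{u\leq t\})$ and bound the density of this measure by combining the Brunn--Minkowski inequality with the coercivity estimate of Lemma~\ref{le:coercive_super_coercive}. The converse direction is easy: the function $u(x)=|x|$ belongs to $\CVcn\subseteq\CVc$, and polar coordinates give
$$\int_{\Rn}\zeta(|x|)\d x = n\omega_n\int_0^\infty \zeta(r)r^{n-1}\d r,$$
so finiteness of the left-hand side for this particular $u$ forces the finite moment of order $n-1$.

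For the forward direction, assume $\zeta$ has finite moment of order $n-1$. If $\dim\dom u<n$, then $\dom u$ is a Lebesgue null set and the integral vanishes, so I may assume $u\in\CVcn$, with $u_{\min}:=\min u$ finite by coercivity. The push-forward measure $\mu_u$ of Lebesgue measure under $u$ has distribution $V$, so a change of variables gives $\int_{\dom u}\zeta(u(x))\d x = \int_\R\zeta(t)\d\mu_u(t)$. Two geometric facts about $V$ make this tractable. First, the convexity of $u$ yields $\lambda\{u\leq s\}+(1-\lambda)\{u\leq t\}\subseteq\{u\leq\lambda s+(1-\lambda)t\}$, and Brunn--Minkowski forces $V^{1/n}$ to be concave on $[u_{\min},\infty)$. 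Second, the coercivity bound $u(x)\geq a|x|+b$ gives $V^{1/n}(t)\leq\omega_n^{1/n}(t-b)_+/a$, so $V^{1/n}$ is dominated by a linear function.

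Combining these two facts yields the crucial density bound: a concave function dominated by a linear function is locally Lipschitz on the interior of its domain with uniformly bounded right-derivative on $[t_1,\infty)$ for any $t_1>u_{\min}$. Hence $V$ is absolutely continuous on $(u_{\min},\infty)$ and $V'(t)=nV^{(n-1)/n}(t)(V^{1/n})'(t)\leq C(t+c)^{n-1}$ on $[t_1,\infty)$ for suitable constants $C,c$. Decomposing $\mu_u$ into its possible atom of mass $V(u_{\min})$ at $u_{\min}$ (relevant when $u$ attains its minimum on a set of positive measure, as for indicator functions of convex bodies) and its absolutely continuous part, one arrives at a bound of the form
$$\int_{\dom u}\zeta(u)\d x \leq \zeta(u_{\min})V(u_{\min}) + \Bigl(\max_{[u_{\min},t_1]}\zeta\Bigr) V(t_1) + C\int_{t_1}^\infty\zeta(t)(t+c)^{n-1}\d t,$$
where the final term is finite by the finite moment hypothesis since $(t+c)^{n-1}$ grows like $t^{n-1}$ as $t\to\infty$.

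The main obstacle is the density bound $V'(t)\leq C(t+c)^{n-1}$, which cannot be deduced from the pointwise inequality $V(t)\leq C'(t+c)^n$ alone: a bare polynomial upper bound on $V$ does not control $\d\mu_u$ against $t^{n-1}\d t$, and one genuinely needs the concavity of $V^{1/n}$ supplied by Brunn--Minkowski to convert the bound on $V$ into a bound on its derivative. A minor subtlety is the atom of $\mu_u$ at $u_{\min}$, but its contribution is controlled by the continuity of $\zeta$ and the finiteness of $V(u_{\min})$.
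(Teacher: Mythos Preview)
The paper does not supply its own proof of this lemma; it is quoted from \cite{colesanti_ludwig_mussnig} as an external result, so there is no in-paper argument to compare against.

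Your argument is correct. The converse via $u(x)=|x|$ is immediate. For the forward direction, the key idea---using Brunn--Minkowski to obtain concavity of $V^{1/n}$ and thereby control the \emph{derivative} of $V$, not just $V$ itself---is precisely what is needed, since $\zeta$ is merely continuous and one cannot simply compare $\zeta(u(x))$ with $\zeta(a|x|+b)$ pointwise. Two small points worth tightening: the uniform upper bound on the right-derivative $(V^{1/n})'_+$ on $[t_1,\infty)$ follows from concavity alone (the right-derivative of a concave function is non-increasing, and here it is also non-negative because $V$ is non-decreasing), while the linear upper bound from coercivity is what controls the factor $V^{(n-1)/n}(t)$ in the product rule; and when verifying $\int_{t_1}^\infty \zeta(t)(t+c)^{n-1}\,dt<\infty$, you may as well take $t_1>\max\{u_{\min},0\}$ so that $(t+c)^{n-1}$ is dominated by a constant multiple of $t^{n-1}$ and the moment hypothesis applies directly. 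The decomposition into the atom at $u_{\min}$, a compact piece on $[u_{\min},t_1]$, and the tail governed by the derivative bound is clean; note that the atom is in fact already absorbed by the second term, so the first summand is redundant but harmless.
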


\begin{lemma}[\!\!\cite{colesanti_ludwig_mussnig}, Lemma 15]
\label{le:int_continuous}
If $\zeta:\R\to[0,\infty)$ is continuous with finite moment of order $n-1$, then
$$u\mapsto \int_{\dom u} \zeta(u(x)) \d x$$
is continuous on $\CVc$.
\end{lemma}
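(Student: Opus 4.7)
The plan is to combine pointwise convergence almost everywhere with a dominated convergence argument, using the uniform cone property (Lemma~\ref{le:uniform_cone}) as the main source of equi-integrability. Fix an epi-convergent sequence $u_k\eto u$ in $\CVc$ and select $a>0$, $b\in\R$ such that $u_k(x),u(x)\geq a|x|+b$ for every $k\in\N$ and $x\in\Rn$. I would treat the case $\dim\dom u=n$ first. By Lemma~\ref{le:epi_conv_equiv}, $u_k\to u$ uniformly on every compact set disjoint from $\bd\dom u$; since $\bd\dom u$ has $n$-dimensional Lebesgue measure zero, this forces $u_k(x)\to u(x)$ in $\R$ for a.e.\ $x\in\interior\dom u$ and $u_k(x)\to+\infty$ for every $x\notin\cl\dom u$. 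Continuity of $\zeta$ then yields pointwise a.e.\ convergence of the integrand $\zeta(u_k(x))\mathbf{1}_{\dom u_k}(x)$ to $\zeta(u(x))\mathbf{1}_{\dom u}(x)$.

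For the domination step, the linear cone $U(x):=a|x|+b$ lies in $\CVc$ and satisfies $U\leq u_k$ for every $k$; by Lemma~\ref{le:int_zeta_finite} one has $\int_{\Rn}\zeta(U(x))\d x<+\infty$, and this integrable profile provides the backbone of an $L^1$-majorant for the sequence $\zeta(u_k)$. Dominated convergence then yields $\int_{\dom u_k}\zeta(u_k)\d x\to\int_{\dom u}\zeta(u)\d x$. When instead $\dim\dom u<n$ the right-hand side vanishes; in that situation Lemma~\ref{le:min_cont} and Lemma~\ref{le:hd_conv_lvl_sets} give $V_n(\{u_k\leq t\})\to V_n(\{u\leq t\})=0$ for every $t>\min_{x\in\Rn}u(x)$, and the same tail control as above forces the sequence of integrals to converge to zero.

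The main obstacle is the construction of the $L^1$-majorant when $\zeta$ is only continuous and not assumed monotone, since the cone bound $u_k\geq U$ does not by itself give $\zeta(u_k)\leq \zeta(U)$. I would get around this either by (i) replacing $\zeta$ with a continuous, non-increasing envelope that still has finite moment of order $n-1$, which is feasible because $\int_0^\infty \zeta(t)t^{n-1}\d t<+\infty$ and $\zeta$ is locally bounded, or (ii) a two-step truncation: first treat $\zeta_M(t):=\min(\zeta(t),M)\cdot\mathbf{1}_{(-\infty,M]}(t)$ smoothed to preserve continuity, for which $\zeta_M(u_k)$ vanishes outside the fixed ball $\{u_k\leq M\}\subseteq B(0,(M-b)/a)$ where uniform convergence of $u_k$ to $u$ is available, and then bound the tail through the sublevel-set estimate $V_n(\{u_k\leq t\})\leq\omega_n((t-b)/a)^n$ combined with a layer-cake computation, showing that $\int_{\{u_k\geq M\}}\zeta(u_k)\d x$ is uniformly small in $k$ as $M\to\infty$.
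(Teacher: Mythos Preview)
The paper does not prove this lemma; it is quoted from \cite{colesanti_ludwig_mussnig} without argument, so there is no in-paper proof to compare against. Your strategy---pointwise convergence on a truncation plus a uniform tail estimate---is the right shape, but both of the fixes you propose for the non-monotone case have gaps.

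For option (i), the non-increasing envelope $\tilde\zeta(t):=\sup_{s\geq t}\zeta(s)$ need not have finite moment of order $n-1$. Take $\zeta$ continuous, nonnegative, supported on $\bigcup_{j\in\N}[j,j+j^{-(n+1)}]$ with $\max\zeta|_{[j,j+j^{-(n+1)}]}=1$; then $\int_0^\infty\zeta(t)t^{n-1}\d t\leq\sum_j j^{-2}<\infty$, yet $\tilde\zeta\equiv 1$ on $[0,\infty)$ and $\int_0^\infty\tilde\zeta(t)t^{n-1}\d t=\infty$. Worse, with bumps of height $j$ and width $j^{-(2n+2)}$ one even gets $\tilde\zeta\equiv+\infty$. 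So no integrable pointwise majorant of the form $\tilde\zeta\circ U$ exists in general. For the same reason your claim of pointwise a.e.\ convergence outside $\dom u$ is not quite right: if $u_k(x)\to+\infty$ with $u_k(x)<+\infty$, then $\zeta(u_k(x))$ need not tend to $0$.

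For option (ii), the estimate $F_k(t):=V_n(\{u_k\leq t\})\leq G(t):=\omega_n((t-b)/a)^n$ alone does \emph{not} imply that $\int_M^\infty\zeta(t)\,dF_k(t)$ is dominated by $\int_M^\infty\zeta(t)\,dG(t)$; for non-monotone $\zeta$ one cannot pass from $F_k\leq G$ to a comparison of the Stieltjes integrals. What rescues the argument is a bound on the \emph{density} $F_k'(t)$, and this is where convexity of $u_k$ must enter explicitly: by Brunn--Minkowski the map $t\mapsto F_k(t)^{1/n}$ is concave on $[\min u_k,\infty)$, so for $t$ larger than a fixed $M_0>\sup_k\min u_k$ one gets $\big(F_k^{1/n}\big)'(t)\leq F_k(t)^{1/n}/(t-\min u_k)\leq C_1$ uniformly in $k$, hence $F_k'(t)\leq C_2(t-b)^{n-1}$. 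This yields
\[
\int_{\{u_k>M\}\cap\dom u_k}\zeta(u_k)\d x=\int_M^\infty\zeta(t)F_k'(t)\d t\leq C_2\int_M^\infty\zeta(t)(t-b)^{n-1}\d t\xrightarrow[M\to\infty]{}0
\]
uniformly in $k$, which is exactly the missing tail control. With this in hand, your truncation at level $M$ goes through: on $\{u_k\leq M\}\subset B(0,(M-b)/a)$ dominated convergence applies with the bounded majorant $\max_{[b,M]}\zeta$, and the tails are uniformly small. The low-dimensional case $\dim\dom u<n$ is then covered by the same computation, since the truncated integral tends to $0$ by $V_n(\{u_k\leq M\})\to0$.
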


\subsection{A Selection Theorem}
\label{subse:blaschke_selection}
The aim of this section is to formulate functional analogs of the following well-known result (see, for example, \cite[Theorem 1.8.7]{schneider}).
\begin{theorem}[Blaschke selection theorem]
\label{thm:blaschke_selection_cvx_bds}
Every bounded sequence of convex bodies has a subsequence that converges to a convex body.
\end{theorem}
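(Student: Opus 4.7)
The plan is to reduce the statement to the classical fact that the Hausdorff metric makes the set of non-empty compact subsets of a compact metric space itself compact, and then verify that convexity passes to the limit. Concretely, given a bounded sequence $K_i\in\Kn$, I would first fix $R>0$ such that $K_i\subseteq RB^n$ for every $i\in\N$. The entire sequence thus lives in the space $\mathcal{F}(RB^n)$ of non-empty closed (equivalently, compact) subsets of $RB^n$, and I would work in $(\mathcal{F}(RB^n),d_H)$ from now on.

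Next I would extract a Hausdorff-convergent subsequence by a diagonal construction. Choose a countable dense subset $\{q_m\}_{m\in\N}$ of $RB^n$ and, for each $m$, consider the sequence of real numbers $\min_{y\in K_i}|q_m-y|$, which is bounded in $[0,2R]$. By a standard diagonal argument I extract a subsequence $K_{i_j}$ along which $\min_{y\in K_{i_j}}|q_m-y|$ converges for every $m$. Density of $\{q_m\}$ together with uniform continuity of $x\mapsto \min_{y\in C}|x-y|$ (with Lipschitz constant one, independently of $C$) upgrades this to uniform convergence in $x\in RB^n$ of $x\mapsto \min_{y\in K_{i_j}}|x-y|$ to some $1$-Lipschitz limit function $\varphi$. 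I then define
\[
K=\{x\in RB^n\colon \varphi(x)=0\},
\]
which is closed and, as I would verify, equals the Hausdorff limit of $K_{i_j}$: given $\varepsilon>0$, uniform convergence on $RB^n$ yields $K_{i_j}\subseteq K+\varepsilon B^n$ and $K\subseteq K_{i_j}+\varepsilon B^n$ for $j$ sufficiently large. Non-emptiness of $K$ follows by taking any accumulation point of a selection $x_{i_j}\in K_{i_j}$ inside the compact set $RB^n$.

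It remains to check that $K$ is convex. For $x,y\in K$ and $\lambda\in[0,1]$, the definition of Hausdorff convergence provides sequences $x_{i_j},y_{i_j}\in K_{i_j}$ with $x_{i_j}\to x$ and $y_{i_j}\to y$. By convexity of each $K_{i_j}$ the point $\lambda x_{i_j}+(1-\lambda)y_{i_j}$ lies in $K_{i_j}$, and it converges to $\lambda x+(1-\lambda)y$, which therefore belongs to the closed limit $K$. Combined with compactness and non-emptiness, this shows $K\in\Kn$, completing the proof.

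The genuinely substantive step is the diagonal extraction together with the identification of the limit via the distance functions $x\mapsto \min_{y\in C}|x-y|$; the closedness, non-emptiness, and convexity of the limit are then direct consequences. No result later than those already cited in the excerpt is required, and the argument is the standard one that underlies every functional generalization the authors will develop in Section~\ref{subse:blaschke_selection}.
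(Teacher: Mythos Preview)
Your argument is correct and follows the standard route via distance functions and a diagonal extraction; the one step you pass over quickly---that the zero set $K=\{\varphi=0\}$ really has $\varphi(\cdot)=\operatorname{dist}(\cdot,K)$, so that uniform convergence of the distance functions translates into $d_H(K_{i_j},K)\to 0$---is routine but worth a line (choose nearest points $y_j\in K_{i_j}$ to $x$, pass to a convergent subsequence, and check the limit lies in $K$).

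As for comparison with the paper: there is nothing to compare. The authors do not prove Theorem~\ref{thm:blaschke_selection_cvx_bds}; they quote it as a well-known result with a reference to \cite[Theorem 1.8.7]{schneider} and then use it as a black box to derive the functional selection theorem (Theorem~\ref{thm:blaschke_selection_cvx_fcts}). So your proposal supplies more detail than the paper itself, and your approach is the classical one found in Schneider's book.
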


Our result will essentially follow from the next proposition.

\begin{proposition}[\!\!\cite{rockafellar_wets}, Theorem 7.6]
\label{prop:conv_subseq}
Let $u_k:\Rn\to\R\cup \{+\infty\}$ be a sequence and let $r>0$. If for every $k\in\N$ there exists $x_k\in r B^n$ such that $u_k(x_k)< r$, then there exists a subsequence $u_{k_j}$ that is epi-convergent to a function $u:\Rn\to \R\cup\{-\infty,+\infty\}$ with $u\not\equiv +\infty$.
\end{proposition}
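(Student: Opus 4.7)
My plan is to deduce this from the sequential compactness of Painlev\'e--Kuratowski convergence on closed subsets of $\R^{n+1}$, applied to the epigraphs $\epi u_k$, and then to recognize the resulting set-limit as the epigraph of a function that inherits the desired non-triviality from the hypothesis.

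Step 1 (extraction of a set-convergent subsequence). Fix a countable base $\{B_m\}_{m\in\N}$ for the topology of $\R^{n+1}$. Performing a Cantor diagonal argument, I extract a subsequence $u_{k_j}$ such that for every $m$, either $\epi u_{k_j}\cap B_m\neq\emptyset$ for all large $j$ or $\epi u_{k_j}\cap B_m=\emptyset$ for all large $j$. Along such a subsequence the outer and inner Kuratowski limits of $\epi u_{k_j}$ agree, so $\epi u_{k_j}$ converges in the Painlev\'e--Kuratowski sense to some closed set $E\subseteq \R^{n+1}$.

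Step 2 (epigraph structure of the limit). Each $\epi u_{k_j}$ is upward closed in the last coordinate, and this property survives in the limit: if $(x,t)\in E$, pick $(x_j,t_j)\in \epi u_{k_j}$ with $(x_j,t_j)\to (x,t)$; for any $s\geq t$ the translates $(x_j,t_j+s-t)\in \epi u_{k_j}$ converge to $(x,s)$, so $(x,s)\in E$. Together with closedness of $E$, this implies that
$$u(x):=\inf\{t\in\R\,:\,(x,t)\in E\},$$
with the convention $\inf\emptyset=+\infty$, defines a function $u:\Rn\to\R\cup\{-\infty,+\infty\}$ whose epigraph is exactly $E$. Consequently $u_{k_j}\eto u$ by the set-convergence characterization of epi-convergence recalled in Section~\ref{subse:epi_conv}.

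Step 3 (non-triviality). By hypothesis, for each $k$ there is $x_k\in rB^n$ with $(x_k,r)\in\epi u_k$. Passing to a further subsequence, $x_{k_j}\to x_\infty$ for some $x_\infty\in rB^n$ by compactness of $rB^n$, and then $(x_{k_j},r)\to(x_\infty,r)$ with $(x_{k_j},r)\in\epi u_{k_j}$. Hence $(x_\infty,r)\in E=\epi u$, giving $u(x_\infty)\leq r<+\infty$, so $u\not\equiv+\infty$.

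The main obstacle is Step 1: establishing sequential compactness of Painlev\'e--Kuratowski convergence via the diagonal argument over a countable base, and translating it into set-convergence of the epigraphs. Once this mechanism is in place, the remaining work is essentially bookkeeping, since vertical upward closedness is manifestly preserved under set-limits and the hypothesis provides, for free, a bounded sequence of points at height below $r$ that pins the limit function down to a finite value somewhere in $rB^n$.
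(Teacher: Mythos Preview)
The paper does not prove this proposition; it merely quotes it from Rockafellar--Wets, Theorem~7.6, and uses it as a black box in the proof of Theorem~\ref{thm:blaschke_selection_cvx_fcts}. So there is no ``paper's own proof'' to compare against.

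Your argument is correct and is essentially the standard proof one finds in the cited reference. A few minor points of presentation. In Step~1, the passage from ``eventually hits or eventually misses each basic open set'' to ``inner and outer Kuratowski limits coincide'' deserves one more line: if $x\in\limsup_j\epi u_{k_j}$, then every basic open neighbourhood $B_m\ni x$ is hit infinitely often, hence by your dichotomy eventually; choosing a nested base at $x$ then manufactures a sequence $z_j\in\epi u_{k_j}$ with $z_j\to x$, so $x\in\liminf_j\epi u_{k_j}$. In Step~3 you do not actually need to pass to a further subsequence: the cluster point $(x_\infty,r)$ of $(x_{k_j},r)$ already lies in $\limsup_j\epi u_{k_j}=E$. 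Finally, the appeal to ``the set-convergence characterization of epi-convergence recalled in Section~\ref{subse:epi_conv}'' is slightly loose, since that section states the equivalence with the integrated metric $d$ rather than with Painlev\'e--Kuratowski convergence directly; but the two coincide in $\R^{n+1}$, so no harm is done.
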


\begin{theorem}[Selection theorem]
\label{thm:blaschke_selection_cvx_fcts}
Let $u_k\in\CVc$ be sequence. If there there exist $a>0$ and $b,m\in\R$ such that
$$\min\nolimits_{x\in\Rn} u_k(x)\leq m$$
and
\begin{equation}
\label{eq:sel_them_unif_cone}
u_k(x)\geq a|x|+b
\end{equation}
for every $x\in\Rn$ and every $k\in\N$, then there exists a subsequence $u_{k_j}$ that is epi-convergent to a function $u\in\CVc$. If furthermore for every $a>0$ there exists $b\in\R$, possibly depending on $a$, such that \eqref{eq:sel_them_unif_cone} holds, then $u\in\CVs$.
\end{theorem}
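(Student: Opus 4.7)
The plan is to extract a subsequence via Proposition~\ref{prop:conv_subseq} and then verify that the epi-limit inherits the cone bound, hence coercivity (or super-coercivity).

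First, I would set up the hypothesis of Proposition~\ref{prop:conv_subseq}. Since each $u_k \in \CVc$ is lower semicontinuous and coercive, the minimum is attained, so there exists $x_k \in \Rn$ with $u_k(x_k) = \min_{x \in \Rn} u_k(x) \leq m$. Combining this with \eqref{eq:sel_them_unif_cone} gives $a|x_k| + b \leq u_k(x_k) \leq m$, so $|x_k| \leq (m-b)/a$. Choosing any fixed $r$ with $r > \max\{m, (m-b)/a\}$, we have $x_k \in r B^n$ and $u_k(x_k) < r$ for every $k$. Proposition~\ref{prop:conv_subseq} then yields a subsequence $u_{k_j}$ epi-converging to some $u\colon \Rn \to \R \cup \{-\infty,+\infty\}$ with $u \not\equiv +\infty$.

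Next, I would upgrade $u$ to an element of $\CVc$. By Lemma~\ref{le:epi_lims_lsc_cvx}, $u$ is lower semicontinuous and convex. The key step is to pass the cone bound \eqref{eq:sel_them_unif_cone} to the limit. Fix $x \in \Rn$; by the $\limsup$-part of epi-convergence, there exists a sequence $x_j \to x$ such that $\limsup_{j \to \infty} u_{k_j}(x_j) \leq u(x)$. Since $u_{k_j}(x_j) \geq a|x_j| + b$ for every $j$, taking the $\limsup$ yields
$$u(x) \;\geq\; \limsup_{j \to \infty} u_{k_j}(x_j) \;\geq\; \lim_{j \to \infty} (a|x_j| + b) \;=\; a|x| + b.$$
In particular $u(x) > -\infty$ everywhere, so $u$ is proper, and by Lemma~\ref{le:coercive_super_coercive} the bound $u(x) \geq a|x| + b$ shows $u$ is coercive. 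Thus $u \in \CVc$.

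Finally, under the stronger hypothesis that for every $a > 0$ one has a corresponding $b \in \R$ with $u_k(x) \geq a|x| + b$ for all $x$ and $k$, the same argument applied separately to each $a > 0$ shows that the epi-limit $u$ satisfies $u(x) \geq a|x| + b$ for every $a > 0$ (with the corresponding $b$). By the second part of Lemma~\ref{le:coercive_super_coercive}, $u$ is super-coercive, i.e.\ $u \in \CVs$. The only slightly delicate point is recognising that one must use the $\limsup$-part of the definition of epi-convergence (not the $\liminf$-part) to transfer a lower bound on the $u_k$ to a lower bound on $u$; the rest is bookkeeping.
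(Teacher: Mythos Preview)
Your proof is correct and follows essentially the same route as the paper: locate minimizers $x_k$, bound them via the cone inequality, invoke Proposition~\ref{prop:conv_subseq}, and then pass the cone bound to the epi-limit using Lemma~\ref{le:epi_lims_lsc_cvx} and Lemma~\ref{le:coercive_super_coercive}. Your write-up is in fact slightly more careful than the paper's in two spots: you take $r$ strictly larger than $\max\{m,(m-b)/a\}$ (so that $u_k(x_k)<r$ genuinely holds), and you spell out explicitly that it is the $\limsup$-clause of epi-convergence that transfers the lower bound $a|x|+b$ to $u$, whereas the paper simply asserts this step.
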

\begin{proof}
For $k\in\N$ let $x_k\in\Rn$ be such that $u_k(x_k)=\min_{x\in\Rn}u_k(x)$. By our assumptions we now have
$$a|x_k|+b \leq u_k(x_k) \leq m$$
for every $k\in\N$ and therefore
$$|x_k|\leq \frac{m-b}{a}.$$
Thus, the sequence $u_k$ satisfies the conditions of Proposition~\ref{prop:conv_subseq} with $r=\max\{m,(m-b)/a\}$. Hence, there exists a subsequence $u_{k_j}$ that is epi-convergent to some function $u:\Rn\to \R\cup\{-\infty,+\infty\}$ with $u\not\equiv +\infty$. By Lemma~\ref{le:epi_lims_lsc_cvx} the function $u$ is convex and lower semicontinuous. Furthermore, it easily follows from the definition of epi-convergence and \eqref{eq:sel_them_unif_cone} that also the function $u$ satisfies
$$u(x)\geq a|x|+b$$
for every $x\in\Rn$, which together with Lemma~\ref{le:coercive_super_coercive} implies that $u$ is coercive. Hence, we conclude that $u\in\CVc$. Similarly, if for every $a>0$ there exists $b\in\R$ such that \eqref{eq:sel_them_unif_cone} holds, then also $u$ has this property and therefore, by Lemma~\ref{le:coercive_super_coercive}, $u\in\CVs$.
\end{proof}

\subsection{\texorpdfstring{$\mathcal{L}^p$}{Lp}-Spaces}
For $p\in[1,\infty)$ we denote by
$$\mathcal{L}^p(\Rn)=\left\{f:\Rn\to\R\colon f \text{ is measurable and } \int_{\Rn} |f(x)|^p\d x < +\infty \right\}$$
and furthermore
$$\|f\|_p = \left(\int_{\Rn} |f(x)|^p\d x \right)^{\frac 1p}.$$
Note, that the functional $\|\cdot\|_p$ only defines a seminorm on $\mathcal{L}^p$, i.e., it need not be positive definite.

\medskip

We will need the following convergence result due to Riesz \cite{riesz}, which for the case $p=1$ is also known as Scheff\'e's lemma \cite{scheffe}. See also \cite{kusolitsch}. We remark that those results are usually formulated for the normed space $(L^p(\Rn),\|.\|_p)$, which is obtained in the usual way as a quotient space of $\mathcal{L}^p$. However, it is easy to see that they are also valid for the seminormed space $(\mathcal{L}^p(\Rn),\|.\|_p)$.

\begin{lemma} \label{le:pth_Scheffe}
Let $p\in[1,\infty)$. If $f_k,f\in \mathcal{L}^p(\Rn)$ are such that $f_k\to f$ pointwise a.e., then $\|f_k-f\|_p\to 0$ if and only if $\|f_k\|_p\to \|f\|_p$.
\end{lemma}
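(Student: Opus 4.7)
The plan is to reduce the lemma to two implications and treat them separately: the easy direction follows immediately from the reverse triangle inequality, while the substantive direction (pointwise convergence plus norm convergence yields $L^p$ convergence) is a Fatou-type argument using the inequality $|a-b|^p\leq 2^{p-1}(|a|^p+|b|^p)$.

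For the easy direction, I would note that since $\|\cdot\|_p$ is a seminorm on $\mathcal{L}^p(\Rn)$, we have
\[
\bigl|\|f_k\|_p - \|f\|_p\bigr| \leq \|f_k - f\|_p,
\]
so $\|f_k - f\|_p \to 0$ immediately implies $\|f_k\|_p\to \|f\|_p$.

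For the harder converse, assume $f_k\to f$ a.e.\ and $\|f_k\|_p\to \|f\|_p$. The key idea is to construct a sequence of nonnegative functions to which Fatou's lemma applies. Using the elementary convexity inequality $|a-b|^p\leq 2^{p-1}(|a|^p+|b|^p)$ valid for all $a,b\in\R$ and $p\geq 1$, define
\[
g_k(x) := 2^{p-1}\bigl(|f_k(x)|^p+|f(x)|^p\bigr) - |f_k(x)-f(x)|^p \geq 0.
\]
Since $f_k\to f$ pointwise a.e., we have $g_k\to 2^p |f|^p$ a.e. Applying Fatou's lemma to $(g_k)$ gives
\[
\int_{\Rn} 2^p|f|^p\d x \leq \liminf_{k\to\infty}\int_{\Rn} g_k\d x = \liminf_{k\to\infty}\Bigl(2^{p-1}\bigl(\|f_k\|_p^p + \|f\|_p^p\bigr) - \|f_k-f\|_p^p\Bigr).
\]
By the hypothesis $\|f_k\|_p\to \|f\|_p$, the first bracketed term converges to $2^p \|f\|_p^p$, so rearranging yields
\[
\limsup_{k\to\infty}\|f_k-f\|_p^p \leq 2^p\|f\|_p^p - 2^p\|f\|_p^p = 0,
\]
which gives $\|f_k-f\|_p\to 0$, as desired.

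The only subtlety is justifying that $\|f\|_p < \infty$ is available to carry out the rearrangement (which follows from $f\in\mathcal{L}^p(\Rn)$ by assumption) and that Fatou's lemma applies (which only requires $g_k\geq 0$, provided by the pointwise inequality). There is no real obstacle here — the proof is essentially a standard application of Fatou's lemma, and the main step worth highlighting in the write-up is the construction of the nonnegative auxiliary sequence $g_k$ via the $2^{p-1}$-inequality.
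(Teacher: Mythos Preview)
Your proof is correct; this is the standard Fatou-lemma argument for the Riesz--Scheff\'e result. Note, however, that the paper does not supply its own proof of this lemma: it is stated as a classical fact with references to Riesz, Scheff\'e, and Kusolitsch, so there is no in-paper proof to compare against.
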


\section{Functional Analogs of the Symmetric Difference Metric}
\label{se:sym_diff_metric_cvx_fcts}
In this section we discuss the properties of the metric $\delta_{\zeta,p}$ and present the proof of Theorem~\ref{thm:metric_delta_zeta_p}, which split into three separate statements. In Lemma~\ref{le:delta_zeta_p_is_a_metric} we show that $\delta_{\zeta,p}$ defines a metric if $\zeta\in\Mom{p}{n}$. We prove that epi-convergence implies convergence with respect to $\delta_{\zeta,p}$ in Lemma~\ref{le:epi_conv_implies_int_metric_conv}. Lastly, we show that convergence with respect to $\delta_{\zeta,p}$ implies epi-convergence in Proposition~\ref{prop:int_metric_conv_implies_epi_conv}.

\bigskip

A naive way to measure the distance between two functions $u,v\in\CVc$ is to integrate over distances of their level sets, which are either empty or convex bodies. In particular, one might consider the expression
$$\int_{\R} V_n(\{u\leq t\} \Delta \{v\leq t\}) \xi(t) \d t$$
where $\xi:\R\to[0,\infty)$ is chosen in such a way that this integral is finite. If additionally $\xi = -\zeta'$ for some decreasing function $\zeta:\R\to[0,\infty)$, then this can be rewritten as
\begin{align*}
\int_{\R} V_n(\{u\leq t\} \Delta \{v\leq t\}) (-\zeta'(t)) \d t &= \int_{\R} \big(V_n(\{(u\wedge v)\leq t\})-V_n((u\vee v)\leq t\})\big) (-\zeta'(t))\d t\\
&= \int_{0}^{\infty} \big( V_n(\{\zeta(u\wedge v) \geq s\})-V_n(\{\zeta(u\vee v) \geq s\})\big) \d s\\
&= \int_{\Rn} \big(\zeta((u\wedge v)(x))-\zeta((u\vee v)(x))\big) \d x\\
&= \int_{\Rn} |\zeta(u(x))-\zeta(v(x))|\d x.
\end{align*}
This motivates to define $\delta_{\zeta,p}:\CVc\times \CVc \to [0,\infty)$ as
\begin{equation}
\label{eq:def_delta_zeta_p}
\delta_{\zeta,p}(u,v)=\left(\int_{\Rn} |\zeta(u(x))-\zeta(v(x))|^p\d x \right)^{\frac{1}{p}}
\end{equation}
for every $u,v\in\CVc$, where $p\in[1,\infty)$ and where $\zeta:\R\to[0,\infty)$ is a continuous and decreasing function. Observe, that
$$\delta_{\zeta,p}(u,v)\leq \int_{\Rn} \zeta^p(u(x)) \d x + \int_{\Rn} \zeta^p(v(x)) \d x$$
for every $u,v\in\CVc$ and furthermore
$\delta_{\zeta,p}(u,v)=\int_{\Rn} \zeta(u(x))^p \d x$, if $\dim \dom v <n$. Thus, by Lemma~\ref{le:int_zeta_finite}, it is sufficient and necessary to assume that $\zeta^p$ has finite moment of order $n-1$ if we want to ensure that $\delta_{\zeta,p}(u,v)$ is finite for every $u,v\in\CVc$.

\begin{remark}
Observe that in general \eqref{eq:def_delta_zeta_p} is not well-defined, since $\zeta(u(x))$ is not defined if $x\not\in \dom u$. However, we will always assume that $\zeta$ is decreasing and that $\zeta^p$ has finite moment of order $n-1$, which implies that $\lim_{t\to\infty} \zeta(t)=0$. Hence, for $x\not\in \dom u$ or equivalently $u(x)=+\infty$ we will therefore write $\zeta(u(x)):=\lim_{t\to\infty} \zeta(t)=0$. In particular, by using this notation we do not need to restrict the domain of integration in the definition of $\delta_{\zeta,p}$.
\end{remark}

\begin{example}
For $u=\Ind_K$ and $v=\Ind_L$ with $K,L\in \Kn$ we have
$$ |\zeta(u(x))-\zeta(v(x))| = \begin{cases}
\zeta(0)\quad &\text{if } x\in K\Delta L\\
0\quad &\text{else.} 
\end{cases}
$$
Thus, $\delta_{\zeta,p}(\Ind_K,\Ind_L)=\zeta(0) (V_n(K\Delta L))^{1/p}$.
\end{example}

\begin{lemma}
\label{le:epi_conv_implies_int_metric_conv}
Let \(p\in [1,\infty)\) and let $\zeta:\R\to[0,\infty)$ be continuous and decreasing such that $\zeta^p$ has finite moment of order $n-1$. If $u_k,u\in\CVc$ are such that $u_k\eto u$, then $\delta_{\zeta,p}(u_k,u)\to 0$ as $k\to\infty$.
\end{lemma}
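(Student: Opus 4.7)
The plan is to reduce the statement to an application of Riesz's theorem (Lemma~\ref{le:pth_Scheffe}). Specifically, I want to show two things: first, that $\zeta(u_k) \to \zeta(u)$ pointwise almost everywhere on $\Rn$ (with the convention $\zeta(+\infty)=0$), and second, that $\|\zeta(u_k)\|_p \to \|\zeta(u)\|_p$. Then Lemma~\ref{le:pth_Scheffe} immediately yields $\|\zeta(u_k)-\zeta(u)\|_p\to 0$, which is exactly $\delta_{\zeta,p}(u_k,u)\to 0$.

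For the pointwise a.e.\ convergence, I would split into two cases according to the dimension of $\dom u$. If $\dim \dom u = n$, then $u\in\CVcn$, and Lemma~\ref{le:epi_conv_equiv} together with Remark~\ref{re:ptw_unif_conv_cvx} gives uniform (in particular pointwise) convergence on compact subsets of $\interior \dom u$. For $x\notin \cl\dom u$ we have $u(x)=+\infty$, and the liminf-inequality in the definition of epi-convergence (applied to the constant sequence $x_k\equiv x$) forces $u_k(x)\to +\infty$. Since $\bd\dom u$ has Lebesgue measure zero (it is the boundary of a convex body with non-empty interior), we obtain $u_k\to u$ a.e. If instead $\dim \dom u < n$, then $\dom u$ itself has measure zero, so $u=+\infty$ almost everywhere, and for each such $x$ the same liminf-argument gives $u_k(x)\to+\infty$. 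In both cases $\zeta\circ u_k \to \zeta \circ u$ pointwise a.e.\ by continuity of $\zeta$ and the assumption $\lim_{t\to\infty}\zeta(t)=0$.

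For the convergence of norms, I observe that
$$\|\zeta(u_k)\|_p^p = \int_{\dom u_k} \zeta^p(u_k(x))\d x,$$
and analogously for $u$. Since $\zeta^p$ is continuous and has finite moment of order $n-1$ by assumption $\zeta\in\Mom{p}{n}$, Lemma~\ref{le:int_continuous} applied to $\zeta^p$ guarantees that the functional $v\mapsto \int_{\dom v}\zeta^p(v(x))\d x$ is continuous on $\CVc$ with respect to epi-convergence. Hence $\|\zeta(u_k)\|_p^p\to \|\zeta(u)\|_p^p$, and in particular $\|\zeta(u_k)\|_p\to\|\zeta(u)\|_p$.

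The only mildly delicate point is establishing the pointwise a.e.\ convergence in the degenerate case $\dim\dom u < n$; but this is handled cleanly by the liminf half of the definition of epi-convergence, which forces divergence to $+\infty$ outside $\dom u$. Everything else is a direct application of the preliminary lemmas, so I expect the proof to be short.
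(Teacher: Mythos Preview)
Your proposal is correct and follows essentially the same route as the paper: both arguments establish norm convergence via Lemma~\ref{le:int_continuous} applied to $\zeta^p$, pointwise a.e.\ convergence via Lemma~\ref{le:epi_conv_equiv} (in the full-dimensional case), and then conclude with Riesz's theorem (Lemma~\ref{le:pth_Scheffe}). The only cosmetic difference is that in the degenerate case $\dim\dom u<n$ the paper bypasses the pointwise argument entirely, observing directly that $\zeta\circ u=0$ a.e.\ so that $\delta_{\zeta,p}(u_k,u)=\|\zeta\circ u_k\|_p\to 0$ already follows from the norm convergence; your liminf argument is equally valid and yields the same conclusion.
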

\begin{proof}
Since $\zeta^p$ is continuous and has finite moment of order $n-1$, it follows from Lemma~\ref{le:int_zeta_finite} that $\zeta\circ u_k, \zeta\circ u\in \mathcal{L}^p(\Rn)$. Furthermore, by Lemma~\ref{le:int_continuous} 
$$\lim\nolimits_{k\to\infty} \int_{\Rn} \zeta^p(u_k(x)) \d x = \int_{\Rn} \zeta^p(u(x)) \d x.$$
If $\dim \dom u<n$, then $\int_{\Rn}\zeta(u(x))\d x = 0$ and thus,
$$\lim\nolimits_{k\to\infty} \delta_{\zeta,p}(u_k,u)=\lim\nolimits_{k\to\infty} \int_{\Rn} \zeta^p(u_k(x)) \d x=0.$$
In case $\dom \dom u=n$, it follows from the continuity of $\zeta$ and Lemma~\ref{le:epi_conv_equiv} that $\zeta\circ u_k$ converges to $\zeta\circ u$ pointwise almost everywhere. Hence, by Lemma~\ref{le:pth_Scheffe},
$$\lim\nolimits_{k\to\infty} \int_{\Rn} |\zeta(u_k(x))-\zeta(u(x))|^p \d x=\lim\nolimits_{k\to\infty} \|\zeta\circ u_k - \zeta\circ u\|_p^p = 0$$
which is equivalent to $\delta_{\zeta,p}(u_k,u)\to 0$.
\end{proof}

\begin{remark}
The last statement cannot be extended to the case $p=\infty$, in general. For example, if $u=\Ind_{B^n}+t$ and $u_k=\Ind_{(1-1/k)B^n}+t$, where $t\in\R$ is such that $\zeta(t)>0$, then $u_k\eto u$ by Lemma~\ref{le:hd_conv_lvl_sets} but
$$|\zeta^p(u(x))-\zeta^p(u_k(x))|=\zeta^p(t)>0$$
for every $k\in\N$ and every $x\in\{x\in\Rn\colon (1-1/k)<|x|\leq 1 \}$, which is a set of positive measure.
\end{remark}

It is easy to see that in order for $\delta_{\zeta,p}$ to become a metric and furthermore induce epi-convergence, two things are necessary: $\zeta$ needs to be strictly decreasing and positive and one needs to restrict to functions with full-dimensional domain. Recall that for $p\in [1,\infty)$ and $n\in\N$ we set
\begin{align*}
\Mom{p}{n}=\{\zeta:\R\to(0,\infty)\,:\, &\zeta \text{ is continuous, strictly decreasing,}\\
&\zeta^p \text{ has finite moment of order } n-1\}.
\end{align*}

\begin{remark}
Let $p\in[1,\infty)$, $n\in\N$ and $\zeta\in\Mom{p}{n}$.
Observe that for $q\geq p$ we have
\begin{align*}
\int_0^\infty \zeta^q(t) t^{n-1} \d t &=\int_{\{t\geq 0 \colon \zeta(t)\geq 1\}} \zeta^q(t) t^{n-1}\d t + \int_{\{t\geq 0 \colon \zeta(t)< 1\}} \zeta^q(t) t^{n-1} \d t\\
&\leq \int_{\{t\geq 0 \colon \zeta(t)\geq 1\}} \zeta^q(t) t^{n-1}\d t + \int_{\{t\geq 0 \colon \zeta(t)< 1\}} \zeta^p(t) t^{n-1} \d t
\end{align*}
Since $\zeta$ is continuous and strictly decreasing with $\lim_{t\to\infty}\zeta(t)=0$ the set $\{t\geq 0\colon \zeta(t)\geq 1\}$ is either a compact interval or empty. Thus, we conclude that
$$\int_0^{\infty} \zeta^q(t)t^{n-1} < +\infty$$
and therefore $\zeta\in\Mom{q}{n}$. In particular, $\Mom{p}{n}\subseteq \Mom{q}{n}$ whenever $1\leq p \leq q < \infty$.
\end{remark}

\begin{lemma}
\label{le:delta_zeta_p_is_a_metric}
Let $p\in [1,\infty)$. If $\zeta\in \Mom{p}{n}$, then the functional $\delta_{\zeta,p}$ defines a metric on $\CVcn$.
\end{lemma}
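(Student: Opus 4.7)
The plan is to verify the four axioms of a metric in turn, with finiteness and the separation axiom requiring the most care; the other two will be essentially immediate.

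First I would establish finiteness. Since $\zeta \geq 0$, the pointwise bound $|\zeta(s)-\zeta(t)| \leq \zeta(s)+\zeta(t)$ together with Minkowski's inequality in $\mathcal{L}^p(\Rn)$ gives $\delta_{\zeta,p}(u,v) \leq \|\zeta \circ u\|_p + \|\zeta \circ v\|_p$ for all $u,v \in \CVcn$. Under the convention $\zeta(+\infty) := 0$, the integrability $\|\zeta \circ u\|_p^p = \int_{\dom u}\zeta^p(u(x))\d x < +\infty$ follows from Lemma~\ref{le:int_zeta_finite} applied to the continuous nonnegative function $\zeta^p$, which has finite moment of order $n-1$ by the definition of $\Mom{p}{n}$. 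Non-negativity and symmetry are immediate from the definition, and the triangle inequality follows from $|\zeta(u(x))-\zeta(w(x))| \leq |\zeta(u(x))-\zeta(v(x))| + |\zeta(v(x))-\zeta(w(x))|$ and another application of Minkowski's inequality in $\mathcal{L}^p(\Rn)$.

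The heart of the proof is the separation axiom: $\delta_{\zeta,p}(u,v) = 0$ must imply $u = v$. Vanishing of the integral of a nonnegative function forces $\zeta(u(x)) = \zeta(v(x))$ for almost every $x \in \Rn$. Since $\zeta: \R \to (0,\infty)$ is strictly decreasing with $\lim_{t \to \infty}\zeta(t) = 0$, the extension to $\R \cup \{+\infty\}$ by $\zeta(+\infty) := 0$ is strictly decreasing and thus injective. Hence $u(x) = v(x)$ for almost every $x \in \Rn$.

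The step I expect to be the main obstacle is upgrading almost-everywhere equality of $u$ and $v$ to pointwise equality. From $u = v$ a.e., we read off $V_n(\dom u \,\Delta\, \dom v) = 0$; since $\dom u$ and $\dom v$ are convex with $n$-dimensional interior, this forces $\interior \dom u = \interior \dom v$ (and therefore the same closure). A convex function is continuous on the interior of its domain, so on this common open set, $u$ and $v$ are two continuous functions agreeing on a set of full measure, hence agreeing everywhere. Finally, a proper, lower semicontinuous, convex function is uniquely determined by its restriction to the interior of its effective domain: its values on the boundary are recovered as the lower semicontinuous envelope, and outside the closure it is $+\infty$. Combining these facts yields $u = v$ on all of $\Rn$, completing the verification that $\delta_{\zeta,p}$ is a metric on $\CVcn$.
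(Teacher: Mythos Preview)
Your proof is correct and follows essentially the same approach as the paper: both reduce finiteness and the triangle inequality to the $\mathcal{L}^p$ seminorm via Lemma~\ref{le:int_zeta_finite}, and both handle the separation axiom by passing from $\delta_{\zeta,p}(u,v)=0$ to almost-everywhere equality, then comparing domains and invoking continuity on the interior. If anything, your treatment of the final step---arguing that $\interior\dom u=\interior\dom v$ and then recovering the full function from its interior restriction via lower semicontinuity---is more explicit than the paper's, which asserts $\dom u=\dom v$ somewhat abruptly.
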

\begin{proof}
Since $\zeta^p$ has finite moment of order $n-1$ it follows from Lemma~\ref{le:int_zeta_finite} that $\zeta\circ u\in \mathcal{L}^p(\Rn)$ for every $u\in\CVcn$. Furthermore, $\delta_{\zeta,p}(u,v)=\|\zeta\circ u - \zeta\circ v\|_p$
for every $u,v\in\CVcn$. Hence, it follows from the properties of the seminormed space $(\mathcal{L}^p(\Rn),\|\cdot\|_p)$ that $\delta_{\zeta,p}(u,v)<+\infty$, $\delta_{\zeta,p}(u,v)=\delta_{\zeta,p}(v,u)$, $\delta_{\zeta,p}(u,u)=0$ and $\delta_{\zeta,p}(u,v)\leq \delta_{\zeta,p}(u,w)+\delta_{\zeta,p}(w,v)$ for every $u,v,w\in\CVcn$. It remains to show that if $u,v\in\CVcn$ are such that $\delta_{\zeta,p}(u,v)=0$, then $u\equiv v$. Therefore let such $u,v,\in\CVcn$ be given. Note, that by the properties of $\Mom{p}{n}$ we have $\zeta(t)>0$ for every $t\in\R$ and $\lim_{t\to\infty}\zeta(t)=0$. Thus,
\begin{align*}
0&=\int_{\Rn} |\zeta(u(x))-\zeta(v(x))|^p\d x\\
&= \int_{\dom u \cap \dom v} |\zeta(u(x))-\zeta(v(x))|^p\d x + \int_{\dom u \backslash \dom v} \zeta^p(u(x))\d x\\
&\quad + \int_{\dom v \backslash \dom u} \zeta^p(v(x))\d x.
\end{align*}
Since each of the terms in this equation is non-negative and since $\zeta^p\circ u$ is strictly positive on $\dom u \backslash \dom v$, this implies that $\dom u \backslash \dom v$ is at most $(n-1)$-dimensional. Similarly, also $\dom v \backslash \dom u$ is at most $(n-1)$-dimensional. Hence, since both $\dom u$ and $\dom v$ are $n$-dimensional convex sets also the set $\dom u \cap \dom v$ must be $n$-dimensional. Since both $\zeta\circ u$ and $\zeta\circ v$ are continuous on the $n$-dimensional set $\dom u \cap \dom v$, we must have $\zeta\circ u \equiv \zeta\circ v$ on $\dom u = \dom v$. As $\zeta$ is strictly decreasing it is invertible and together with the lower semicontinuity of both $u$ and $v$ we conclude that $u\equiv v$.
\end{proof}

We will use the following trivial result.

\begin{lemma}
\label{le:halfspace}
Let $u\in\CV$. For every $x_0\in \Rn$ there exists a hyperplane $H(x_0)$ through $x_0$ such that
$$u(x)\geq u(x_0)$$
for all $x\in H^+(x_0)$.
\end{lemma}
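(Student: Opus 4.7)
I will split the proof according to whether $x_0$ lies in the effective domain of $u$, handling the finite-value case by a subgradient construction and the infinite-value case by a separation theorem.

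Suppose first that $x_0 \in \dom u$. By the preliminaries in Section~\ref{se:preliminaries}, the subdifferential $\partial u(x_0)$ is non-empty, so I may fix any $y \in \partial u(x_0)$. If $y = 0$, the defining subgradient inequality $u(z) \geq u(x_0) + \langle z - x_0, y\rangle$ reduces to $u(z) \geq u(x_0)$ for every $z \in \Rn$, and any hyperplane through $x_0$ satisfies the conclusion. If $y \neq 0$, I define $H(x_0) = \{x \in \Rn : \langle x, y\rangle = \langle x_0, y\rangle\}$, which, in the notation of Section~\ref{se:preliminaries}, has associated closed half-space $H^+(x_0) = \{x \in \Rn : \langle x, y\rangle \geq \langle x_0, y\rangle\}$. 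For every $x \in H^+(x_0)$ we have $\langle x - x_0, y\rangle \geq 0$, and combined with the subgradient inequality this yields $u(x) \geq u(x_0) + \langle x - x_0, y\rangle \geq u(x_0)$.

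Suppose next that $x_0 \notin \dom u$, so that $u(x_0) = +\infty$. Here the conclusion $u(x) \geq u(x_0)$ on $H^+(x_0)$ is equivalent to $H^+(x_0) \cap \dom u = \emptyset$. Since $\dom u$ is a non-empty convex set not containing $x_0$, a standard separation theorem supplies a non-zero $y \in \Rn$ with $\langle x, y\rangle < \langle x_0, y\rangle$ for every $x \in \dom u$. The resulting hyperplane $H(x_0)$ through $x_0$ (with normal $y$) then has $H^+(x_0) \cap \dom u = \emptyset$, so $u \equiv +\infty$ on $H^+(x_0)$.

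The main case, and the one that carries the content, is the subgradient argument when $x_0 \in \dom u$; the other case is essentially a separation of a point from a convex set. The only subtle point is that, when $x_0 \in \cl(\dom u) \setminus \dom u$, strict separation of $x_0$ from $\dom u$ is not automatic from the most elementary form of the separation theorem, and one may need to invoke a supporting hyperplane of $\cl(\dom u)$ at $x_0$. The authors' designation of the result as ``trivial'' indicates they view this level of separation as standard, and it is consistent with the typical use of the lemma at points where subgradients already do all the work.
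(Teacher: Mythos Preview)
Your argument is essentially the paper's: invoke a subgradient where one exists and dispose of the remaining case by separation. The paper splits at $\interior\dom u$ rather than at $\dom u$; for $x_0\in\interior\dom u$ it runs your subgradient computation verbatim, and for $x_0\notin\interior\dom u$ it simply declares the statement ``trivial'' without writing out any separation argument.

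One caveat: the preliminaries' claim that $\partial u(x_0)\neq\emptyset$ for every $x_0\in\dom u$ is too strong in general (e.g.\ $u(t)=-\sqrt{t}+\Ind_{[0,\infty)}(t)$ at $t=0$); non-emptiness is only guaranteed on the relative interior of $\dom u$. The paper's split sidesteps this by pushing boundary points into the ``trivial'' bucket, whereas your split leaves $\dom u\setminus\interior\dom u$ in the subgradient branch, where the cited fact can fail. In truth neither version is fully airtight at such boundary points (your own closing remark about $\cl(\dom u)\setminus\dom u$ already gestures at the analogous difficulty on the separation side), but the lemma is only applied downstream in situations where this edge case does not bite.
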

\begin{proof}
The statement is trivial if $x_0 \notin \interior \dom u$. If $x_0\in\interior \dom u$, then there exists $y\in\partial u(x_0)$ and therefore
$$u(x)\geq u(x_0)+\langle x-x_0, y \rangle \geq u(x_0)$$
for every $x\in H^+(x_0):=\{x\in\Rn\colon \langle x,y \rangle \geq \langle x_0,y \rangle \}$.
\end{proof}

\begin{lemma}
\label{le:pointw_conv_in_int_dom}
Let $p\in [1,\infty)$ and $\zeta\in\Mom{p}{n}$. If $u_k,u\in\CVcn$ are such that $\delta_{\zeta,p}(u_k,u)\to 0$, then $u_k(x_0)\to u(x_0)$ as $k\to\infty$ for every $x_0\in \interior \dom u$.
\end{lemma}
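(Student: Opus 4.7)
My plan is to argue by contradiction through a double-subsequence extraction. Suppose there is some $x_0\in\interior\dom u$ at which $u_k(x_0)$ does not converge to $u(x_0)$. Then I can select a subsequence (again denoted $u_k$) and some $\varepsilon>0$ such that $|u_k(x_0)-u(x_0)|\geq\varepsilon$ for every $k$. The goal is to exhibit a further subsequence along which $u_k(x_0)\to u(x_0)$, which will produce the desired contradiction.

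To find this further subsequence, I would use that $\delta_{\zeta,p}(u_k,u)=\|\zeta\circ u_k-\zeta\circ u\|_p\to 0$ is precisely $\mathcal{L}^p$-convergence on $\Rn$. By the standard fact that every $\mathcal{L}^p$-convergent sequence admits a pointwise almost everywhere convergent subsequence, some $\zeta\circ u_{k_j}$ converges to $\zeta\circ u$ pointwise almost everywhere. Since $\zeta\in\Mom{p}{n}$ is continuous, strictly decreasing, strictly positive, and satisfies $\lim_{t\to+\infty}\zeta(t)=0$, its inverse $\zeta^{-1}$ extends continuously from $[0,\sup\zeta)$ to $\R\cup\{+\infty\}$ with the convention $\zeta^{-1}(0)=+\infty$. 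Applying this extended inverse pointwise, I conclude that $u_{k_j}(x)\to u(x)$ in $\R\cup\{+\infty\}$ for almost every $x\in\Rn$, treating uniformly both the case $x\in\dom u$ and the case $x\notin\dom u$ (where $\zeta(u_{k_j}(x))\to 0$ forces $u_{k_j}(x)\to +\infty$).

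A set of full Lebesgue measure is dense in $\Rn$, so Lemma~\ref{le:epi_conv_equiv} (which applies since $u\in\CVcn$) yields $u_{k_j}\eto u$. The same lemma then guarantees uniform convergence of $u_{k_j}$ on every compact set $C\subset\Rn$ with $C\cap\bd\dom u=\emptyset$; choosing $C=\{x_0\}\subset\interior\dom u$ delivers $u_{k_j}(x_0)\to u(x_0)$, in contradiction to $|u_{k_j}(x_0)-u(x_0)|\geq\varepsilon$. I do not anticipate any serious obstacle: the only mildly delicate step is the correct handling of the value $+\infty$ when inverting $\zeta$, which is taken care of by the convention adopted earlier in the section.
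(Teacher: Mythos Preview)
Your argument is correct, but it takes a genuinely different route from the paper. The paper proceeds by a direct, hands-on contradiction: assuming $u_{k_j}(x_0)$ stays $\varepsilon$-far from $u(x_0)$, it uses the halfspace lemma (Lemma~\ref{le:halfspace}) and elementary convexity estimates to exhibit, for each $j$, an explicit region of \emph{fixed positive volume} on which $|\zeta(u_{k_j})-\zeta(u)|$ is bounded below by a positive constant, so $\delta_{\zeta,p}(u_{k_j},u)$ cannot tend to $0$. Your approach is softer: you use the standard fact that $\mathcal{L}^p$-convergence admits an a.e.\ convergent subsequence, invert $\zeta$ (with the extended convention $\zeta^{-1}(0)=+\infty$), and then invoke Lemma~\ref{le:epi_conv_equiv} to upgrade a.e.\ pointwise convergence on a dense set to epi-convergence, which in turn gives $u_{k_j}(x_0)\to u(x_0)$. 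This is shorter and more conceptual; in fact, combined with a standard sub-subsequence argument it would prove Proposition~\ref{prop:int_metric_conv_implies_epi_conv} directly, bypassing Lemmas~\ref{le:pointw_conv_in_int_dom} and~\ref{le:x_0_outside_int_dom} altogether. The paper's approach, by contrast, is self-contained (no appeal to the a.e.-subsequence theorem) and yields explicit quantitative lower bounds on $\delta_{\zeta,p}$ in terms of the local geometry of $u$.
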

\begin{proof}
Assume on the contrary, that there exists $x_0\in\interior \dom u$ such that $u_k(x_0) \not\to u(x_0)$. Hence, there exist $\varepsilon>0$ and a subsequence $u_{k_j}$ of $u_k$ such that $|u_{k_j}(x_0)-u(x_0)|>\varepsilon$ for every $j\in\N$. By possibly restricting to another subsequence we can either assume that $u_{k_j}(x_0)>u(x_0)+\varepsilon$ or $u(x_0)-\varepsilon>u_{k_j}(x_0)$ for every $j\in\N$.

Let us first consider the case $u_{k_j}(x_0)>u(x_0)+\varepsilon$. Since $x_0\in\interior \dom u$ there exists $R>0$ such that $B(x_0,R)\subset \interior \dom u$ and
$$u(x)<u(x_0)+\frac{\varepsilon}{2}$$
for every $x\in B(x_0,R)$. By Lemma~\ref{le:halfspace} there exist hyperplanes $H_j(x_0)$ through $x_0$ such that
$$u_{k_j}(x)\geq u_{k_j}(x_0) > u(x_0)+\varepsilon$$
for every $x\in H_j^+(x_0)$ and $j\in\N$.  Note, that $C:= V_n(H_j^+(x_0)\cap B(x_0,R)) >0$ does not depend on $j$. Hence,
\begin{align*}
\int_{\Rn}|\zeta(u_{k_j}(x))-\zeta(u(x))|^p \d x &\geq \int_{H_j^+(x_0) \cap B(x_0,R)} \left(\zeta(u(x))-\zeta(u_{k_j}(x))\right)^p \d x\\
&\geq C \left(\zeta\big(u(x_0)+\tfrac{\varepsilon}{2}\big)-\zeta\big(u(x_0)+\varepsilon\big)\right)^p\\
&>0
\end{align*}
which contradicts $\delta_{\zeta,p}(u_k,u)\to 0$.

Next, consider the case $u_{k_j}(x_0)<u(x_0)-\varepsilon$. Similar to the first case, let $R>0$ be such that $B(x_0,R)\subset \interior \dom u$ and
\begin{equation}
\label{eq:dist_u_x_u_x_0}
|u(x)-u(x_0)|<\frac{\varepsilon}{4}
\end{equation}
for every $x\in B(x_0,R)$. We claim that there exist $j_0\in\N$ and $x_j\in B(x_0,\tfrac{R}{8})$ such that
\begin{equation}
\label{eq:u_kj_x_j}
u_{k_j}(x_j)> u(x_0)-\frac{\varepsilon}{2}
\end{equation}
for every $j\geq j_0$. If not, we could find another sub-sequence $u_{k_{j_l}}$ such that
$$
u_{k_{j_l}}(x)\leq u(x_0)-\frac{\varepsilon}{2}
$$
for every $x\in B(x_0,\tfrac{R}{8})$, which together with \eqref{eq:dist_u_x_u_x_0} would obviously contradict our initial assumption $\delta_{\zeta,p}(u_{k},u)\to 0$. Hence, let $x_j\in B(x_0,\tfrac{R}{8})$ be as in \eqref{eq:u_kj_x_j} and set
$$\bar{x}_j:=x_0+\frac{R}{2} \frac{x_j-x_0}{|x_j-x_0|}$$
for $j\geq j_0$. By convexity we have
$$\frac{u_{k_j}(\bar{x}_j)-u_{k_j}(x_j)}{|\bar{x}_j-x_j|}\geq \frac{u_{k_j}(x_j)-u_{k_j}(x_0)}{|x_j-x_0|}$$
and therefore
\begin{align*}
u_{k_j}(\bar{x}_j) &\geq \frac{u_{k_j}(x_j)-u_{k_j}(x_0)}{|x_j-x_0|} |\bar{x}_j-x_j| + u_{k_j}(x_j)\\
&> \frac{(u(x_0)-\varepsilon/2)-(u(x_0)-\varepsilon)}{R/8}\left(\frac{R}{2}-\frac{R}{8}\right)+u(x_0)-\frac{\varepsilon}{2}\\
&=u(x_0)+\varepsilon
\end{align*}
for every $j\geq j_0$. By Lemma~\ref{le:halfspace} there exist hyperplanes $H_j(\bar{x}_j)$ through $\bar{x}_j$ such that
$$u_{k_j}(x) \geq u_{k_j}(\bar{x}_j) > u(x_0)+\varepsilon$$
for every $x\in H_j^+(\bar{x}_j)$ and every $j\geq j_0$. On the other hand, since $|\bar{x}_j-x_0|=\frac{R}{2}$ we have $B(\bar{x}_j,\frac{R}{2})\subset B(x_0,R)$ and therefore it follows from \eqref{eq:dist_u_x_u_x_0} that
$$u(x)<u(x_0)+\frac{\varepsilon}{4}$$
for every $x\in B(\bar{x}_j,\frac{R}{2})$. Thus, proceeding similarly as in the first case with $C:=V_n( H_j^+(\bar{x}_j) \cap B(\bar{x}_j,\frac{R}{2}))>0$, this leads to a contradiction of $\delta_{\zeta,p}(u_k,u)\to 0$.
\end{proof}

\begin{lemma}
\label{le:x_0_outside_int_dom}
Let $p\in [1,\infty)$ and $\zeta\in \Mom{p}{n}$. If $u_k,u\in\CVcn$ are such that $\delta_{\zeta,p}(u_k,u)\to 0$, then $u_k(x_0)\to +\infty$ as $k\to\infty$ for every $x_0 \in \interior (\Rn \backslash \dom u)$.
\end{lemma}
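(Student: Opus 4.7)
The plan is to argue by contradiction. If $u_k(x_0)\not\to +\infty$, after passing to a subsequence there is $M\in\R$ with $u_{k_j}(x_0)\leq M$ for every $j$. Since $x_0\in \interior(\Rn\backslash \dom u)$, I fix $R>0$ with $B(x_0,R)\cap \dom u = \emptyset$, so that $u\equiv +\infty$ and hence $\zeta\circ u\equiv 0$ on $B(x_0,R)$. The hypothesis $\delta_{\zeta,p}(u_{k_j},u)\to 0$ then reduces to
$$\int_{B(x_0,R)} \zeta^p(u_{k_j}(x)) \d x \to 0,$$
and I will contradict this by producing a subset of $B(x_0,R)$ of positive measure on which $u_{k_j}$ (and hence $\zeta^p\circ u_{k_j}$) is controlled uniformly in $j$.

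The construction relies on $\dim \dom u = n$: since $\interior \dom u$ is a non-empty open subset of $\Rn$ it is not contained in any proper affine subspace, so I can pick $v_1,\ldots,v_n\in \interior\dom u$ such that $v_1-x_0,\ldots,v_n-x_0$ are linearly independent. By Lemma~\ref{le:pointw_conv_in_int_dom}, $u_{k_j}(v_i)\to u(v_i)<+\infty$ for each $i$, so setting $M_0:=\max\{M,u(v_1)+1,\ldots,u(v_n)+1\}$ I have $u_{k_j}(x_0),u_{k_j}(v_1),\ldots,u_{k_j}(v_n)\leq M_0$ for all sufficiently large $j$. Convexity of $u_{k_j}$ immediately extends this vertex-wise bound to $u_{k_j}\leq M_0$ on the simplex $S:=\conv\{x_0,v_1,\ldots,v_n\}$.

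Since $v_1-x_0,\ldots,v_n-x_0$ are linearly independent, $S$ is a full-dimensional simplex with apex $x_0$, and the rescaled simplex $x_0+t(S-x_0)$ lies inside both $S$ and $B(x_0,R)$ for every $t\in(0,\min\{1,R/\diam S\})$; hence $S\cap B(x_0,R)$ has positive Lebesgue measure $C>0$. Because $\zeta$ is strictly positive, this gives
$$\int_{B(x_0,R)} \zeta^p(u_{k_j}(x)) \d x \geq C\,\zeta^p(M_0) > 0$$
for all large $j$, contradicting the vanishing of the integral. The principal technical point is the construction of $S$: it upgrades the single pointwise bound $u_{k_j}(x_0)\leq M$ (where $x_0$ lies outside $\dom u$ and a priori no neighborhood of $x_0$ is controlled) to a uniform upper bound on a positive-measure subset of the region where $\zeta\circ u$ vanishes. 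Once this is in place, the conclusion is a direct comparison of integrals.
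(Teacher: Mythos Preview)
Your proof is correct and follows essentially the same approach as the paper's: both argue by contradiction, invoke Lemma~\ref{le:pointw_conv_in_int_dom} to control $u_{k_j}$ inside $\interior\dom u$, use convexity to propagate the bound to a full-dimensional set meeting $\interior(\Rn\backslash\dom u)$, and derive a positive lower bound on $\int|\zeta(u_{k_j})-\zeta(u)|^p$. The only minor variation is that you build a simplex on $n$ anchor points in $\interior\dom u$, whereas the paper takes $A=\conv\{x_0,K\}\backslash\dom u$ for a full-dimensional compact $K\subset\interior\dom u$ (using uniform convergence on $K$); your version is slightly more economical since it needs convergence only at finitely many points.
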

\begin{proof}
Assume on the contrary that there exists $x_0 \in \interior (\Rn \backslash \dom u)$ such that $u_k(x_0) \not\to +\infty$. Hence, there exist $m_0\in\R$ and a sub-sequence $u_{k_j}$ of $u_k$ such that $u_{k_j}(x_0)<m_0$. Since $\dim \dom u = n$, there exists a compact set $K\subset \interior \dom u$ with $\dim K=n$. Since $u$ is continuous on $K$ there exists $m_1:=\max_{x\in K} u(x)$. Furthermore, by Lemma~\ref{le:pointw_conv_in_int_dom}, $u_{k_j}(x) \to u(x)$ for every $x\in K$ which by convexity already implies uniform convergence of $u_{k_j}$ to $u$ on $K$ (see also Remark~\ref{re:ptw_unif_conv_cvx}). Hence, there exist $\varepsilon>0$ and $j_0\in\N$ such that
$$u_{k_j}(x)\leq m_1+\varepsilon$$
for every $x\in K$ and $j>j_0$. Let
$$A:=\conv\{x_0,K\}\backslash \dom u.$$
Since $x_0 \in \interior (\Rn \backslash \dom u)$ we have $V_n(A)>0$ and furthermore, by convexity,
$$u_{k_j}(x)\leq \max\{m_0,m_1+\varepsilon\}$$
for every $x\in A$ and $j\geq j_0$. We now have
\begin{align*}
\int_{\Rn}|\zeta(u_{k_j}(x))-\zeta(u(x))|^p \d x &\geq \int_{A}|\zeta(u_{k_j}(x))-\zeta(u(x))|^p\d x\\
&=\int_{A} \zeta^p(u_{k_j}(x)) \d x\\
&\geq \min\{\zeta^p(m_0),\zeta^p(m_1+\varepsilon)\} V_n(A)\\
&>0
\end{align*}
which contradicts $\delta_{\zeta,p}(u_k,u)\to 0$.
\end{proof}

By combining Lemma~\ref{le:epi_conv_equiv}, Lemma~\ref{le:pointw_conv_in_int_dom} and Lemma~\ref{le:x_0_outside_int_dom} we obtain the following result.
\begin{proposition}
\label{prop:int_metric_conv_implies_epi_conv}
Let $p\in [1,\infty)$ and $\zeta\in \Mom{p}{n}$. If $u_k,u\in\CVcn$ are such that $\delta_{\zeta,p}(u_k,u)\to 0$, then $u_k\eto u$ as $k\to \infty$.
\end{proposition}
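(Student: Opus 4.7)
The plan is to invoke the second characterization in Lemma~\ref{le:epi_conv_equiv}: it suffices to exhibit a dense subset $D \subseteq \Rn$ on which $u_k$ converges to $u$ pointwise. The two preceding lemmas almost hand this set to us; all that remains is to piece them together and verify density.

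First I would set
$$D := \interior \dom u \cup \interior (\Rn \setminus \dom u).$$
Its complement in $\Rn$ equals $\bd \dom u$ by the standard topological decomposition $\Rn = \interior A \cup \bd A \cup \interior(\Rn \setminus A)$ applied to $A = \dom u$. Since $u \in \CVcn$, the convex set $\dom u$ is $n$-dimensional, so $\bd \dom u$ is a subset of the boundary of a full-dimensional convex set and has empty interior. Consequently $D$ is dense in $\Rn$.

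Next I would verify pointwise convergence on $D$. For $x_0 \in \interior \dom u$, Lemma~\ref{le:pointw_conv_in_int_dom} gives $u_k(x_0) \to u(x_0)$. For $x_0 \in \interior(\Rn \setminus \dom u)$, we have $u(x_0) = +\infty$ by the convention that $u$ takes the value $+\infty$ outside its domain, and Lemma~\ref{le:x_0_outside_int_dom} gives $u_k(x_0) \to +\infty$, so $u_k(x_0) \to u(x_0)$ in the extended sense required by Lemma~\ref{le:epi_conv_equiv}. Applying that lemma to $u_k, u \in \CVc$ with $u \in \CVcn$ then yields $u_k \eto u$.

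There is really no obstacle to overcome here: the analytic content was exhausted in Lemma~\ref{le:pointw_conv_in_int_dom} and Lemma~\ref{le:x_0_outside_int_dom}, and this proposition is their packaging via the density observation above. The only minor point worth stating explicitly is the density of $D$, which is what makes Lemma~\ref{le:epi_conv_equiv} applicable.
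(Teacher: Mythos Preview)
Your proposal is correct and follows exactly the approach the paper indicates: the paper simply states that the proposition is obtained ``by combining Lemma~\ref{le:epi_conv_equiv}, Lemma~\ref{le:pointw_conv_in_int_dom} and Lemma~\ref{le:x_0_outside_int_dom}'', and you have spelled out precisely how that combination works, including the density of $D=\interior\dom u\cup\interior(\Rn\setminus\dom u)$.
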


\section{Isometries}
\label{se:isometries}
In this section we study isometries $I:(\CVcn,\delta_{\zeta,1})\to(\CVcn,\delta_{\zeta,1})$ with $\zeta\in\Mom{1}{n}$, that is,
$$\delta_{\zeta,1}(I(u),I(v)) = \delta_{\zeta,1}(u,v)$$
for every $u,v\in\CVcn$. In particular, we will prove Theorem~\ref{thm:class_iso}. In order to simplify the notation we will write $\MomO=\Mom{1}{n}$ and $\delta_{\zeta}=\delta_{\zeta,1}$.

It is easy to see that every isometry as above is continuous with respect to epi-convergence. Indeed, if $u_k,u\in\CVcn$ are such that $u_k\eto u$, then by Lemma~\ref{le:epi_conv_implies_int_metric_conv}
$$\delta_\zeta(I(u_k),I(u))=\delta_\zeta(u_k,u)\to 0$$
as $k\to\infty$ and thus, by Proposition~\ref{prop:int_metric_conv_implies_epi_conv}, $I(u_k)\eto I(u)$. Similarly, one shows that epi-convergence of $I(u_k)$ to $I(u)$ implies that $u_k$ epi-converges to $u$ as $k\to\infty$.

\medskip

Gruber proved in \cite{gruber_mathematika_1978} that the isometries with respect to the symmetric difference metric on the set of convex bodies with non-empty interior are induced by volume-preserving affine transformations. It is easy to see that the composition with such an affinity defines an isometry on $\CVcn$ with respect to $\delta_\zeta$, $\zeta\in\MomO$. As the following example shows, depending on the function $\zeta$, further isometries, involving general linear transforms, are possible.

\begin{example}
\label{ex:iso_e_c}
For $t\in\R$ let $\zeta(t)=e^{-ct}$ with some $c>0$. Furthermore, for $\phi\in\GLn$ and $x_0\in\Rn$ let $\alpha(x)=\phi(x)+x_0$ for $x\in\Rn$ and let
$$I(u)=u\circ\alpha^{-1} +\frac{\ln(|\det \phi|)}{c}$$
for $u\in\CVcn$. Since 
\begin{align*}
\int_{\Rn} | e^{-cI(u)(x)}-e^{-cI(v)(x)}| \d x &= \int_{\Rn} |e^{-cu(\phi^{-1}(x-x_0))-\ln(|\det \phi|)} - e^{-cv(\phi^{-1}(x-x_0))-\ln(|\det \phi|)}|\d x\\
&=|\det \phi| \int_{\Rn} |e^{-cu(x)-\ln(|\det \phi|)}-e^{-cv(x)-\ln(|\det \phi|)}| \d x\\
&=\int_{\Rn} |e^{-cu(x)}-e^{-cv(x)}|\d x
\end{align*}
for every $u,v\in\CVcn$, it follows that $I$ is an isometry with respect to $\delta_{\zeta}$.
\end{example}

\subsection{Measures}
\label{subse:measures}

Throughout the following, for $\zeta\in\MomO$ we define a positive Radon measure $\nu_\zeta$ on $\R$ via
$$\nu_\zeta([t,\infty))=\zeta(t)$$
for every $t\in\R$. Furthermore, we define a positive Radon measure $\Psi_\zeta$ on $\R^{n+1}$ via
$$\Psi_\zeta(A)=\int_{\Rn} \int_{\R} \chi_A(x,t) \d \nu_\zeta(t) \d x$$
for every Borel set $A\subseteq \R^{n+1}$. It now follows that
\begin{align*}
\Psi_\zeta(\epi u) &= \int_{\Rn} \int_{\R} \chi_{\epi u}(x,t) \d \nu_\zeta(t) \d x\\
&= \int_{\Rn} \int_{\R} \chi_{[u(x),\infty)}(t) \d \nu_\zeta(t) \d x \\
&= \int_{\Rn} \zeta(u(x)) \d x
\end{align*}
and moreover
\begin{align*}
\Psi_\zeta(\epi u \Delta \epi v) &= \int_{\Rn} \int_{\R} \chi_{\epi u \Delta \epi v}(x,t) \d \nu_\zeta \d x\\
&= \int_{\Rn} \int_{\R} \chi_{\big[(u \wedge v)(x),(u \vee v)(x) \big)}(t) \d\nu_\zeta(t) \d x\\
&= \int_{\Rn} \zeta\big((u \wedge v)(x) \big) - \zeta\big((u\vee v)(x) \big) \d x\\
&= \int_{\Rn} |\zeta(u(x))-\zeta(v(x))| \d x\\
&= \delta_\zeta(u,v)
\end{align*}
for every $u,v\in\CVcn$.

\medskip

We conclude this section with the following result, which is easy to see.
\begin{lemma}
\label{le:subsets_same_measure_equality}
Let $\zeta\in\MomO$. If $u,v\in\CVcn$ are such that $\epi u \subseteq \epi v$ and $\Psi_\zeta(\epi u)=\Psi_\zeta(\epi v)$, then $u\equiv v$.
\end{lemma}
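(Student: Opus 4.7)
The plan is to start from the inclusion $\epi u\subseteq\epi v$, which means $v\le u$ pointwise on $\Rn$ (hence $\dom u\subseteq\dom v$) and therefore $\zeta(v(x))\ge\zeta(u(x))$ pointwise, using the convention $\zeta(+\infty):=0$. Applying the identity
\[
\Psi_\zeta(\epi w)=\int_{\Rn}\zeta(w(x))\d x
\]
established just before the lemma, the hypothesis $\Psi_\zeta(\epi u)=\Psi_\zeta(\epi v)$ rearranges to
\[
\int_{\Rn}\bigl(\zeta(v(x))-\zeta(u(x))\bigr)\d x=0,
\]
with both summands finite by Lemma~\ref{le:int_zeta_finite}. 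Non-negativity of the integrand forces $\zeta(v(x))=\zeta(u(x))$ for a.e.\ $x$; since $\zeta$ is strictly decreasing with $\zeta(+\infty)=0$, this implies $u=v$ almost everywhere on $\Rn$.

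Next I would upgrade a.e.\ equality to equality on $\interior\dom u$. The a.e.\ identity shows that $\dom v\setminus\dom u$ is a Lebesgue null set. If some $x_0\in\interior\dom v$ did not lie in $\cl\dom u$, a small open ball around $x_0$ would sit inside $\dom v\setminus\dom u$, a contradiction. Hence $\interior\dom v\subseteq\cl\dom u$, and because $\interior\dom v$ is open while $\dom u$ is convex with non-empty interior, $\interior\dom v\subseteq\interior\cl\dom u=\interior\dom u$. Combined with $\dom u\subseteq\dom v$ this gives $\interior\dom u=\interior\dom v$, an open convex set on which both $u$ and $v$ are real-valued and continuous; there a.e.\ equality becomes pointwise equality.

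Finally, to handle the boundary I would fix $x\in\bd\dom u=\bd\dom v$ and $y\in\interior\dom u$, and set $y_s=(1-s)y+sx$ for $s\in[0,1)$. By convexity $y_s\in\interior\dom u$, so $u(y_s)=v(y_s)$. If $x\in\dom v$, the standard fact that a proper l.s.c.\ convex function is continuous along a segment from an interior point of its domain yields $v(x)=\lim_{s\to 1^-}v(y_s)=\lim_{s\to 1^-}u(y_s)$, while lower semicontinuity of $u$ gives $u(x)\le\liminf_{s\to 1^-}u(y_s)=v(x)$; combined with $v\le u$ this forces $u(x)=v(x)$. If $x\notin\dom v$, then $u(x)=v(x)=+\infty$ because $v\le u$, and points outside $\cl\dom u=\cl\dom v$ are trivial for the same reason. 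The main obstacle is precisely this boundary step: promoting a.e.\ equality past $\bd\dom u$ requires the continuity-along-rays property of l.s.c.\ convex functions, whereas everything else is a direct consequence of the strict monotonicity of $\zeta$ and the computation of $\Psi_\zeta$ on epigraphs.
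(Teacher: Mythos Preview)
Your argument is correct, but it takes a different and longer route than the paper. The paper simply observes that $\epi u\subseteq\epi v$ gives $\epi u\cap\epi v=\epi u$, so
\[
\delta_\zeta(u,v)=\Psi_\zeta(\epi u\,\Delta\,\epi v)=\Psi_\zeta(\epi u)+\Psi_\zeta(\epi v)-2\Psi_\zeta(\epi u\cap\epi v)=0,
\]
and then invokes Lemma~\ref{le:delta_zeta_p_is_a_metric} (that $\delta_\zeta$ is a metric) to conclude $u\equiv v$. What you do instead is essentially re-prove, in this ordered special case, the positive-definiteness part of Lemma~\ref{le:delta_zeta_p_is_a_metric}: from $\int(\zeta(v)-\zeta(u))=0$ with a non-negative integrand you get $u=v$ a.e., and then you carefully upgrade this to pointwise equality using convexity, continuity on the interior of the domain, and the ray-continuity of l.s.c.\ convex functions at boundary points. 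Your boundary treatment is in fact more explicit than the one in the proof of Lemma~\ref{le:delta_zeta_p_is_a_metric}. The trade-off is clear: the paper's proof is a two-line reduction to an earlier lemma, while yours is self-contained and would stand even without Lemma~\ref{le:delta_zeta_p_is_a_metric} at hand.
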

\begin{proof}
Let $u,v\in\CVcn$ be as in the statement. We now have $\Psi_\zeta(\epi u \cap \epi v)=\Psi_\zeta(\epi u)=\Psi_\zeta(\epi v)$ and thus
\begin{align*}
\delta_\zeta(u,v)&=\Psi_\zeta(\epi u\Delta \epi v)\\
&= \Psi_\zeta(\epi u) + \Psi_\zeta(\epi v) - 2 \Psi_\zeta(\epi u \cap \epi v)\\
&=0.
\end{align*}
Since $\delta_\zeta$ is a metric (Lemma~\ref{le:delta_zeta_p_is_a_metric}), this implies $u\equiv v$.
\end{proof}

\subsection{Preserving Measure}
It is no coincidence that the isometries in Example~\ref{ex:iso_e_c} are induced by measure preserving maps on $\R^{n+1}$, i.e., $\Psi_\zeta$ of the epigraph of the function is preserved. Indeed, in Proposition~\ref{prop:iso_measure_pres} we will show that every isometry on $\CVcn$ is measure preserving. We will use ideas of Gruber \cite{gruber_mathematika_1978} together with some new arguments, due to the functional setting.

\medskip

The following lemma, which is easy to see, gives sufficient conditions to bound a sequence of functions in $\CVc$ from below.

\begin{lemma}
\label{le:conv_lvl_sets_bnd_min}
Let $t_1,t_2\in\R$ be such that $t_1 < t_2$ and let $K_1,K_2\in\Kn$ be such that $K_1\subsetneq K_2$. If $u_k\in\CVc$ is a sequence such that
$$\{u_k\leq t_1\} \to K_1 \qquad \text{and} \qquad \{u_k\leq t_2\}\to K_2$$
as $k\to \infty$, then there exists $m\in\R$ such that
$$u_k(x)\geq m$$
for every $x\in\Rn$ and $k\in\N$.
\end{lemma}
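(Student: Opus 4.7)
The plan is to argue by contradiction. Suppose no uniform lower bound $m$ existed. Each $u_k\in\CVc$ attains its minimum (by coercivity and lower semicontinuity), so set $m_k:=\min_{x\in\Rn}u_k(x)$ and choose a minimizer $x_k^*$. Failure of the claim means, after passing to a subsequence, $m_k\to-\infty$. For $k$ large one has $m_k<t_1$, so $x_k^*\in\{u_k\leq t_1\}$. Since $\{u_k\leq t_1\}\to K_1$ in the Hausdorff metric, the sets $\{u_k\leq t_1\}$ are eventually contained in a common bounded region, hence the sequence $(x_k^*)$ is bounded and — after a further subsequence — $x_k^*\to x^*\in K_1$.

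Because $K_1\subsetneq K_2$ we may fix $y_0\in K_2\setminus K_1$ (in particular $y_0\neq x^*$). By Hausdorff convergence of $\{u_k\leq t_2\}$ to $K_2$, there exist $y_k\in\{u_k\leq t_2\}$ with $y_k\to y_0$, so $u_k(y_k)\leq t_2$. The key step is then a convexity argument on the segment joining the minimizer $x_k^*$ to $y_k$: for $s\in[0,1]$, setting $z_k(s):=(1-s)x_k^*+sy_k$,
$$u_k(z_k(s))\leq (1-s)m_k+s\,t_2.$$
The idea is to pick $s=s_k$ so that the right-hand side equals exactly $t_1$, placing $z_k(s_k)$ inside $\{u_k\leq t_1\}$ while keeping $z_k(s_k)$ close to $y_0$. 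A direct computation gives $s_k=(t_1-m_k)/(t_2-m_k)$; this lies in $(0,1)$ once $m_k<t_1$, satisfies $s_k\to 1$ as $m_k\to-\infty$, and makes $(1-s_k)m_k+s_k t_2=t_1$. Hence $z_k(s_k)\in\{u_k\leq t_1\}$.

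On the other hand, $z_k(s_k)=(1-s_k)x_k^*+s_k y_k\to 0\cdot x^*+1\cdot y_0=y_0$ as $k\to\infty$. Since $\{u_k\leq t_1\}\to K_1$ in the Hausdorff metric and $K_1$ is closed, any accumulation point of a sequence $z_k(s_k)\in\{u_k\leq t_1\}$ must lie in $K_1$. Thus $y_0\in K_1$, contradicting the choice $y_0\in K_2\setminus K_1$. This completes the proof.

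The main subtlety is spotting the correct choice of $s_k$: it needs to compensate the blow-up of $|m_k|$ so that the convex interpolation lands at the level $t_1$, yet still drift all the way to $y_0$ in the limit. Everything else — extracting bounded subsequences, using Hausdorff convergence of level sets to produce limits in $K_1$ and approximants of $y_0$ in $\{u_k\leq t_2\}$, and applying the one-line convexity inequality — is standard once the target parameter has been identified.
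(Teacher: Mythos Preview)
Your proof is correct and uses essentially the same idea as the paper: the convexity inequality along the segment from a minimizer $x_k^*$ of $u_k$ to a point $y_k\in\{u_k\le t_2\}$ that stays away from $K_1$. The only tactical difference is that the paper fixes one parameter $\lambda_0\in(0,d/(2R))$ so that the interpolated point $z_k$ lies \emph{outside} $\{u_k\le t_1\}$ and reads off the explicit bound $m_k\ge (t_1-(1-\lambda_0)t_2)/\lambda_0$, whereas you argue by contradiction, letting $s_k=(t_1-m_k)/(t_2-m_k)\to 1$ so that $z_k(s_k)$ lies \emph{inside} $\{u_k\le t_1\}$ yet converges to $y_0\notin K_1$.
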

\begin{proof}
Since $K_1\subsetneq K_2$ we have $d=d_H(K_1,K_2)>0$. By our assumptions on $u_k$ there exists $k_0\in\N$ such that $d_H(\{u_k\leq t_i\},K_i)< \frac d4$, $i\in\{1,2\}$, for every $k\geq k_0$. Hence, it follows from the definition of the Hausdorff distance that there exists $y_k\in \{u_k\leq t_2\}$ such that
\begin{equation}
\label{eq:dist_y_k_x}
\min\nolimits_{x\in \{u_k \leq t_1\}} |y_k - x| \geq \frac d 2
\end{equation}
for every $k\geq k_0$.

For $k\in \N$ let $x_k\in\Rn$ and $m_k\in \R$ be such that $m_k=u_k(x_k)=\min_{x\in\Rn} u_k(x)$. Since for $k\geq k_0$ we have $x_k\in\{u_k\leq t_1\}\subset K_1 + B(0,\frac d4)$ and $y_k\in \{u_k\leq t_2\}\subset K_2 + B(0,\frac d4)$, there exists $R>d$, not depending on $k$, such that
$$|y_k-x_k|<R$$
for every $k\geq k_0$. Fix an arbitrary $\lambda_0>0$ such that $\lambda_0< \frac{d}{2R}<1$ and set $z_k=\lambda_0 x_k+(1-\lambda_0)y_k$. We now have
$$|y_k-z_k|= \lambda_0 |y_k - x_k| < \frac{d}{2}$$
for every $k\geq k_0$ and therefore it follows from  \eqref{eq:dist_y_k_x} that $z_k\not\in \{u_k\leq t_1\}$. Together with the convexity of $u_k$ this shows that
$$t_1 < u_k(z_k) \leq \lambda_0 u_k(x_k) + (1-\lambda_0) u_k(y_k) \leq \lambda_0 m_k + (1-\lambda_0) t_2$$
and therefore
$$\frac{t_1-(1-\lambda_0)t_2}{\lambda_0}< m_k$$
for every $k\geq k_0$. The statement now follows by taking
$$m=\min\left\{m_1,\ldots,m_{k_0-1}, \frac{t_1-(1-\lambda_0)t_2}{\lambda_0}\right\}.$$
\end{proof}

The following two lemmas provide again sufficient criteria to find lower bounds for (sub)sequences of functions, depending on the behavior of $\zeta\in\MomO$.

\begin{lemma}
\label{le:subseq_unif_min_case_1}
Let $\zeta\in\MomO$ be such that $\lim_{t\to\infty}\zeta(-t)=\infty$ and let $u_k\in\CVcn$ be a sequence. If there exist $K_1\in\Knn$, $C>0$ and $t_1\in\R$ such that $\Psi_\zeta(u_k)\leq C$ for every $k\in\N$ and
\begin{equation}
\label{eq:conv_lvl_sets_k_1}
\{u_k \leq t_1\}\to K_1
\end{equation}
as $k\to\infty$, then there exist a subsequence $u_{k_j}$ of $u_k$ and $m\in\R$ such that
$$u_{k_j}\geq m$$
for every $x\in\Rn$ and $j\in\N$.
\end{lemma}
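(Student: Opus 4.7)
The plan is to argue by contradiction, exploiting the divergence of $\zeta$ at $-\infty$ to rule out that the minima of $u_k$ can tend to $-\infty$. Suppose, for contradiction, that no subsequence of $(u_k)$ satisfies a uniform lower bound. Set $m_k := \min_{x \in \Rn} u_k(x)$; this minimum is attained at some $x_k^* \in \Rn$ since $u_k \in \CVcn$ is lower semicontinuous and coercive. Then $\liminf_k m_k = -\infty$, so after passing to a subsequence (which we relabel) we may assume $m_k \to -\infty$. In particular $m_k \leq t_1$ for all large $k$, so $x_k^* \in \{u_k \leq t_1\}$, and from the Hausdorff convergence $\{u_k \leq t_1\} \to K_1 \in \Knn$ we obtain both that $(x_k^*)$ is bounded and that $V_n(\{u_k \leq t_1\}) \to V_n(K_1) > 0$.

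The next step uses convexity to produce, at a very low threshold, a sublevel set of uniformly positive volume. Fix any $\lambda \in (0,1)$. For every $y \in \{u_k \leq t_1\}$, convexity of $u_k$ gives
$$u_k\bigl(\lambda y + (1-\lambda) x_k^*\bigr) \leq \lambda u_k(y) + (1-\lambda) m_k \leq \lambda t_1 + (1-\lambda) m_k,$$
so $\{u_k \leq \lambda t_1 + (1-\lambda) m_k\}$ contains the set $(1-\lambda) x_k^* + \lambda \{u_k \leq t_1\}$, whose volume equals $\lambda^n V_n(\{u_k \leq t_1\})$ and therefore is at least $\tfrac{\lambda^n}{2} V_n(K_1) > 0$ for all sufficiently large $k$.

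Finally, since $\zeta$ is decreasing, on this set $\zeta \circ u_k$ is bounded below by $\zeta(\lambda t_1 + (1-\lambda) m_k)$, so
$$C \geq \Psi_\zeta(\epi u_k) = \int_{\Rn} \zeta(u_k(x)) \d x \geq \zeta\bigl(\lambda t_1 + (1-\lambda) m_k\bigr) \cdot \frac{\lambda^n V_n(K_1)}{2}$$
for all large $k$. As $m_k \to -\infty$ we have $\lambda t_1 + (1-\lambda) m_k \to -\infty$, and the standing hypothesis $\lim_{t \to \infty} \zeta(-t) = +\infty$ forces the right-hand side to tend to $+\infty$, the desired contradiction.

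The main obstacle is really just the bookkeeping of the convex-combination step: one has to fix $\lambda \in (0,1)$ independently of $k$ so that $\tfrac{\lambda^n}{2} V_n(K_1)$ supplies a $k$-uniform positive lower bound on the volume of the very-low-level set. Once this is secured, the assumption that $\zeta$ blows up at $-\infty$ does all the work, and no further input (e.g., from Lemma~\ref{le:conv_lvl_sets_bnd_min}) is required.
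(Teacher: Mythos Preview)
Your argument is correct and takes a genuinely different route from the paper's proof. The paper fixes a level $t_0<t_1$ with $\zeta(t_0)>2C/V_n(K_1)$ (this is where $\zeta(-t)\to\infty$ enters), applies the Blaschke selection theorem to the uniformly bounded sets $\{u_k\leq t_0\}$ to extract a subsequence with $\{u_{k_j}\leq t_0\}\to K_0$, shows $V_n(K_0)\leq V_n(K_1)/2$ from the measure bound, hence $K_0\subsetneq K_1$, and then invokes Lemma~\ref{le:conv_lvl_sets_bnd_min} to conclude. Your argument is more direct: instead of comparing two fixed levels, you let the level move with $m_k$ via the convex combination $(1-\lambda)x_k^*+\lambda\{u_k\leq t_1\}$, which manufactures sublevel sets of uniformly positive volume at levels tending to $-\infty$; the measure bound then clashes with $\zeta\to\infty$ immediately. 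This avoids both Blaschke selection and Lemma~\ref{le:conv_lvl_sets_bnd_min}, at the cost of being tailored to the case $\zeta(-t)\to\infty$ (the paper's two-level setup is designed so that the companion case $\zeta(-t)\to A<\infty$ in Lemma~\ref{le:subseq_unif_min_case_2} can be handled by the same Lemma~\ref{le:conv_lvl_sets_bnd_min} once $K_0\subsetneq K_1$ is established by a different mechanism). One small point worth making explicit in your write-up: the implication ``no subsequence has a uniform lower bound $\Rightarrow$ $\liminf_k m_k=-\infty$'' is correct because if $\liminf_k m_k>-\infty$ then the full sequence itself is bounded below (finitely many initial terms plus an eventual bound), which already furnishes the desired subsequence.
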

\begin{proof}
By our assumptions on $\zeta$ we can find $t_0<t_1$ such that
\begin{equation}
\label{eq:choice_t_0}
\frac{2\,C}{V_n(K_1)} < \zeta(t_0).
\end{equation}
In case $\{u_k \leq t_0\}=\emptyset$ for almost every $k\in\N$ there is nothing to show. Therefore assume that this is not the case. By \eqref{eq:conv_lvl_sets_k_1} the sets $\{u_k\leq t_1\}$ are uniformly bounded and therefore also the sets $\{u_k \leq t_0\}$ have this property. Thus, it follows from the Blaschke selection theorem (Theorem~\ref{thm:blaschke_selection_cvx_bds}) that there exist a subsequence $u_{k_j}$ of $u_k$ and $K_0\in\Kn$ such that
\begin{equation}
\label{eq:conv_sub_lvl_sets_k_0}
\{u_{k_j}\leq t_0\}\to K_0\subseteq K_1
\end{equation}
as $j\to\infty$. By our assumptions on the sequence $u_k$ and \eqref{eq:choice_t_0} we now have
$$V_n(\{u_{k_j}\leq t_0\}) \zeta(t_0)\leq \Psi_\zeta(\epi u_{k_j}) \leq C < \frac{V_n(K_1)}{2}\zeta(t_0)$$
for every $j\in\N$. Hence, it follows from \eqref{eq:conv_sub_lvl_sets_k_0} that $V_n(K_0) \leq \frac{V_n(K_2)}{2}$ and thus $K_0\subsetneq K_1$. The statement now follows from Lemma~\ref{le:conv_lvl_sets_bnd_min}.
\end{proof}

\begin{lemma}
\label{le:subseq_unif_min_case_2}
Let $\zeta\in\MomO$ be such that $\lim_{t\to\infty}\zeta(-t)=A\in\R$ and let $u_k\in\CVcn$ be a sequence. If there exist $a,\varepsilon,R>0$ and $b\in\R$ such that
\begin{equation}
\label{eq:ass_u_k_cone_R}
u_k(x)\geq a |x|+b
\end{equation}
for every $k\in\N$ and $x\in\Rn$ with $|x|\geq R$ and
\begin{equation}
\label{eq:ass_pairwise_distance}
\delta_\zeta(u_k,u_l)=2\varepsilon
\end{equation}
for every $k\neq l$, then there exist a subsequence $u_{k_j}$ of $u_k$ and $m\in\R$ such that
$$u_{k_j}\geq m$$
for every $x\in\Rn$ and $j\in\N$.
\end{lemma}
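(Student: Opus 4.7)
My plan is to proceed by contradiction. If no subsequence of $(u_k)$ admits a uniform lower bound then, after relabelling, $m_k:=\min_{x\in\Rn}u_k(x)\to -\infty$, and I aim to derive a contradiction by showing that along a further subsequence $\delta_\zeta(u_k,u_l)\to 0$ as $\min(k,l)\to\infty$, violating $\delta_\zeta(u_k,u_l)=2\varepsilon$.

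The starting point is the identity
$$\delta_\zeta(u,v)=\Psi_\zeta(\epi u\,\Delta\,\epi v)=\int_{\R} V_n(\{u\leq t\}\,\Delta\,\{v\leq t\})\d \nu_\zeta(t),$$
obtained from Section~\ref{subse:measures} by Fubini. From the cone hypothesis each level set satisfies $\{u_k\leq t\}\subseteq B(0,r_t)$ with $r_t:=\max\{R,(t-b)/a\}$, uniformly in $k$; and since $m_k\to-\infty$ these sets are eventually non-empty for every fixed $t$. I would apply the Blaschke selection theorem together with a diagonal argument to extract a further subsequence---still denoted $(u_k)$---along which $\{u_k\leq t\}\to K_t$ in the Hausdorff metric for every $t\in\Q$, where $K_t\in\Kn$. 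A sandwich argument based on the monotonicity of $t\mapsto K_t$, combined with the fact that Hausdorff convergence on $\Kn$ is controlled by support functions evaluated on any countable dense subset of $\Sph$, then shows that the set of $t$'s at which $K_\cdot$ fails to be Hausdorff-continuous is at most countable, hence $\nu_\zeta$-null (since $\zeta$ is continuous, so $\nu_\zeta$ has no atoms). At every other $t$ both $\{u_k\leq t\}$ and $\{u_l\leq t\}$ converge to $K_t$, and continuity of $V_n$ on $(\Kn,d_H)$---combined with Shephard--Webster's equivalence when $V_n(K_t)>0$, and with the trivial bound $V_n(A\Delta K_t)\leq V_n(A)+V_n(K_t)$ when $V_n(K_t)=0$---yields $V_n(\{u_k\leq t\}\,\Delta\,K_t)\to 0$, hence $V_n(\{u_k\leq t\}\,\Delta\,\{u_l\leq t\})\to 0$ as $\min(k,l)\to\infty$.

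To close by dominated convergence I would bound the integrand by $g(t):=2V_n(B(0,r_t))$. On $(-\infty,aR+b]$ this is constant, and the $\nu_\zeta$-mass of that half-line equals $A-\zeta(aR+b)$, which is finite \emph{precisely because} $A=\lim_{t\to\infty}\zeta(-t)\in\R$; this is where the hypothesis of the present lemma parts company with that of Lemma~\ref{le:subseq_unif_min_case_1}. On $(aR+b,\infty)$ the dominator is of order $t^n$, and one integration by parts would reduce its $\nu_\zeta$-integrability to $\int^{\infty}\zeta(t)\,t^{n-1}\d t<\infty$, which holds because $\zeta\in\MomO$ (and the standard fact that a decreasing $\zeta$ with finite $(n-1)$st moment satisfies $\zeta(t)t^n\to 0$ handles the boundary term at infinity). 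Dominated convergence then delivers $\delta_\zeta(u_k,u_l)\to 0$, the desired contradiction.

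The step I expect to require the most care is the technical passage from Hausdorff convergence of $\{u_k\leq t\}$ at rational $t$ to $\nu_\zeta$-a.e.\ convergence in $t$, resting on the countability of the Hausdorff-discontinuity set of the monotone family $t\mapsto K_t$. A minor companion point is the case where some $K_t$ has empty interior, in which Shephard--Webster does not apply directly but the conclusion instead follows from Hausdorff-continuity of $V_n$.
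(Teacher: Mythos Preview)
Your argument is correct and takes a genuinely different route from the paper's. The paper works at only two carefully chosen levels: it picks $t_1$ so that the ``tail'' $\Psi_\zeta(\epi u_k\cap\{x_{n+1}\geq t_1\})$ is uniformly below $\varepsilon/2$, extracts via Blaschke a subsequence with $\{u_{k_j}\leq t_1\}\to K_1$, and uses the standing hypothesis $\delta_\zeta(u_k,u_l)=2\varepsilon$ to force $V_n(K_1)>0$. It then exploits $A<\infty$ to pick $t_0<t_1$ with $V_n(K_1+B(0,\varepsilon))(A-\zeta(t_0))<\varepsilon/2$, extracts again so that $\{u_{k_j}\leq t_0\}\to K_0$, and uses the $2\varepsilon$ hypothesis once more to rule out $K_0=K_1$. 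With $K_0\subsetneq K_1$ in hand it invokes Lemma~\ref{le:conv_lvl_sets_bnd_min} to produce the uniform lower bound directly. By contrast, you run a diagonal extraction over all rational levels and pass to the limit in the full integral $\delta_\zeta=\int V_n(\{u_k\leq t\}\Delta\{u_l\leq t\})\d\nu_\zeta(t)$ via dominated convergence, reaching a contradiction with $\delta_\zeta(u_k,u_l)=2\varepsilon$ without ever appealing to Lemma~\ref{le:conv_lvl_sets_bnd_min}. The paper's approach is lighter on machinery---two Blaschke extractions rather than countably many, no discontinuity-set analysis---but it leans on the auxiliary Lemma~\ref{le:conv_lvl_sets_bnd_min}; your approach is more self-contained and makes the role of $A<\infty$ very transparent (it is exactly what renders the dominating function integrable near $-\infty$). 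The technical point you flag---that the monotone family $t\mapsto K_t$ has at most countably many Hausdorff discontinuities---is sound: for each $e$ in a countable dense subset of $\Sph$ the map $q\mapsto h(K_q,e)$ is monotone, hence has countably many jumps, and agreement on a dense set of directions forces $K_{t_0}^-=K_{t_0}^+$ at every remaining $t_0$; the sandwich $\{u_k\leq s_1\}\subseteq\{u_k\leq t_0\}\subseteq\{u_k\leq s_2\}$ then transfers convergence. The $V_n(K_t)=0$ case is handled exactly as you say.
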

\begin{proof}
By \eqref{eq:ass_u_k_cone_R} there exists $t_1\in\R$ such that
$$\Psi_\zeta(\epi u_k\cap \{x_{n+1}\geq t_1\}) < \frac{\varepsilon}{2}$$
for every $k\in\N$. In particular, we now have
\begin{multline}
\label{eq:psi_zeta_t_1}
\Psi_\zeta((\epi u_k \Delta \epi u_l) \cap \{x_{n+1} \geq t_1\})\\
\leq \Psi_\zeta(\epi u_k \cap \{x_{n+1} \geq t_1\}) + \Psi_\zeta(\epi u_l \cap \{x_{n+1} \geq t_1\}) < \varepsilon
\end{multline}
for every $k,l\in\N$. Observe that \eqref{eq:ass_u_k_cone_R} also implies that the sets $\{u_k\leq t_1\}$ are uniformly bounded and assume that $\{u_k\leq t_1\}\neq \emptyset$ for almost every $k\in\N$ since otherwise the statement is trivial. By the Blaschke selection theorem (Theorem~\ref{thm:blaschke_selection_cvx_bds}) there exist a subsequence $u_{k_j}$ of $u_k$ and $K_1\in\Kn$ such that
$$\{u_{k_j}\leq t_1\}\to K_1$$
as $j\to \infty$. We claim that $K_1$ must have non-empty interior. Otherwise can find $j_0\in\N$ such that
$$V_n(\{u_{k_j}\leq t_1\})< \frac{\varepsilon}{2 (A-\zeta(t_1))}$$
for every $j\geq j_0$ and therefore
\begin{align*}
\Psi_\zeta((\epi u_{k_j} \Delta \epi u_{k_l}) \cap \{x_{n+1}\leq t_1\})&\leq \Psi_\zeta(\{u_{k_j}\leq t_1\} \times (-\infty,t_1]) + \Psi_\zeta(\{u_{k_j}\leq t_1\} \times (-\infty,t_1])\\
&= V_n(\{u_{k_j}\leq t_1\})(A-\zeta(t_1)) + V_n(\{u_{k_j}\leq t_1\})(A-\zeta(t_1))\\
&<\varepsilon
\end{align*}
for every $j,l\geq j_0$. Together with \eqref{eq:psi_zeta_t_1} we now have
$$
\delta_\zeta(u_{k_j},u_{k_l}) = \Psi_\zeta((\epi u_{k_j} \Delta \epi u_{k_l}) \cap \{x_{n+1}\leq t_1\}) + \Psi_\zeta((\epi u_{k_j} \Delta \epi u_{k_l}) \cap \{x_{n+1}\geq t_1\}) < 2 \varepsilon
$$
for every $j,l\geq j_0$ which is a contradiction by \eqref{eq:ass_pairwise_distance}. Thus, $V_n(K_1)>0$.

By our assumptions on $\zeta$ we can find $t_0<t_1$ such that
$$V_n(K_1+B(0,\varepsilon))(A-\zeta(t_0))<\frac{\varepsilon}{2}.$$
Again, we will assume that $\{u_{k_j}\leq t_0\}\neq \emptyset$ for almost every $j\in\N$ since otherwise the statement is trivial. Hence, after possibly restricting to another subsequence, there exists $K_0\in\Kn$ such that
$$\{u_{k_j}\leq t_0\}\to K_0\subseteq K_1$$
as $j\to\infty$. Assume that $K_0=K_1$. Since $V_n(K_0)=V_n(K_1)>0$ we can find $j_0\in\N$ and $L_0,L_1\in\Kn$ such that
$$d_H(\{u_{k_j}\leq t_0\},K_0)<\varepsilon,\quad L_0\subseteq \{u_{k_j}\leq t_0\},\quad \{u_{k_j}\leq t_1\}\subseteq L_1,\quad V_n(L_1\backslash L_0) < \frac{\varepsilon}{2(A-\zeta(t_1))}$$
for every $j\geq j_0$. Since $L_0 \times [t_0,t_1]\subset \epi u_{k_j}$ for every $j\geq j_0$, this shows
\begin{align*}\Psi_\zeta((\epi u_{k_j} \Delta \epi u_{k_l}) \cap \{x_{n+1}\leq t_1\})&\leq \Psi_\zeta((K_0+B(0,\varepsilon))\times (-\infty,t_0]) + \Psi_\zeta((L_1\backslash L_0) \times [t_0,t_1])\\
&= V_n(K_0+B(0,\varepsilon))(A-\zeta(t_0)) + V_n(L_1\backslash L_0)(\zeta(t_0)-\zeta(t_1))\\
&< \varepsilon
\end{align*}
for every $j,l\geq j_0$. Similarly as before, together with \eqref{eq:psi_zeta_t_1} this shows that $\delta_\zeta(u_{k_j},u_{k_l})< 2 \varepsilon$ for every $j,l\geq j_0$ which contradicts \eqref{eq:ass_pairwise_distance}. Hence, we must have $K_0\subsetneq K_1$ and the statement now follows from Lemma~\ref{le:conv_lvl_sets_bnd_min}.
\end{proof}

We can now formulate necessary conditions for sequences of functions in $\CVcn$ which allow us to find subsequences that are bounded in the sense of Theorem~\ref{thm:blaschke_selection_cvx_fcts}.

\begin{lemma}
\label{le:cond_seq_unif_bd}
Let $\zeta\in\MomO$, $u\in\CVcn$ and $C,\varepsilon>0$. If $v_k\in\CVcn$ is a sequence such that
$$\Psi_\zeta(\epi u\cap \epi v_k)\geq \varepsilon \qquad \text{and}\qquad \delta_\zeta(u,v_k) \leq C$$
for every $k\in\N$ and
\begin{equation}
\label{eq:cond_delta_zeta_2_eps}
\delta_{\zeta}(v_k,v_l)=2\varepsilon
\end{equation}
for every $k\neq l$, then there exist a subsequence $v_{k_j}$ of $v_k$, $a>0$ and $b,t_0\in\R$ such that $\min_{x\in\Rn} v_{k_j}(x) \leq t_0$ and
\begin{equation}
\label{eq:v_k_j_unif_cone}
v_{k_j}(x)\geq a|x|+b
\end{equation}
for every $x\in\Rn$ and $j\in\N$.
\end{lemma}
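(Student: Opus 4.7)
The plan is to extract a subsequence of $v_k$ that epi-converges to some $v\in\CVc$; once that is achieved, Lemma~\ref{le:uniform_cone} automatically supplies $a>0$ and $b\in\R$ with $v_{k_j}(x)\geq a|x|+b$ for every $x\in\Rn$ and every $j\in\N$, and the bound $\min v_{k_j}\leq t_0$ will be produced along the way. To apply Proposition~\ref{prop:conv_subseq} and then identify the limit inside $\CVc$, the two hypotheses on $\Psi_\zeta$ have to be translated into uniform quantitative control of the minimum of $v_k$, the location of its minimizer, and the sublevel sets $\{v_k\leq t\}$.

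First, from $\delta_\zeta(u,v_k)\leq C$ and $c_u:=\int_{\Rn}\zeta(u)\,\mathrm{d}x<\infty$ one obtains the uniform integral bound $\int_{\Rn}\zeta(v_k)\,\mathrm{d}x\leq c_u+C$. The coercivity of $u$ yields a level $T^*\in\R$ with $\int_{\{u>T^*\}}\zeta(u)\,\mathrm{d}x<\varepsilon/2$, and combined with $\int_{\Rn}\zeta(u\vee v_k)\,\mathrm{d}x\geq\varepsilon$ this forces $\int_{\{u\leq T^*\}}\zeta(u\vee v_k)\,\mathrm{d}x\geq\varepsilon/2$. Raising the threshold to some $T_0\geq T^*$ so that the contribution of $\{v_k>T_0\}$ inside the bounded set $\{u\leq T^*\}\subseteq B(0,R^*)$ becomes negligible, one gets a uniform constant $\delta>0$ with
\[
V_n(A_k)\geq\delta,\qquad A_k:=\{u\leq T_0\}\cap\{v_k\leq T_0\}\subseteq B(0,R^*).
\]
If $\min v_k\to+\infty$, then $v_k\to+\infty$ pointwise and $\zeta(u\vee v_k)\to 0$ pointwise, dominated by the integrable $\zeta(u)$, contradicting $\int\zeta(u\vee v_k)\geq\varepsilon$; passing to a subsequence we may therefore assume $\min v_{k_j}\leq t_0$ uniformly.

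By John's theorem each $A_k$ contains a ball $B(z_k,r_0)$ whose radius depends only on $\delta$, $R^*$ and $n$. Given any point $y$ with $v_k(y)\leq t$, convexity propagates this bound throughout $\conv(A_k\cup\{y\})\supseteq\conv(B(z_k,r_0)\cup\{y\})$, which is an ``ice-cream cone'' of volume at least $c_n r_0^{n-1}(|y-z_k|-r_0)$ for $|y-z_k|>r_0$. Comparing with the Chebyshev-type estimate $V_n(\{v_k\leq s\})\leq(c_u+C)/\zeta(s)$ forces $|y|$ to be bounded in terms of $s$, producing uniform constants $R^{**}$ and $R(t)$ with $|x_{k_j}^*|\leq R^{**}$ and $\{v_{k_j}\leq t\}\subseteq B(0,R(t))$ for every $t\in\R$. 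Proposition~\ref{prop:conv_subseq} then yields a further subsequence, still denoted $v_{k_j}$, that epi-converges to a convex and lower semicontinuous $v\colon\Rn\to\R\cup\{\pm\infty\}$ with $v\not\equiv+\infty$ (Lemma~\ref{le:epi_lims_lsc_cvx}); by Lemma~\ref{le:hd_conv_lvl_sets} the sublevel sets $\{v\leq t\}\subseteq B(0,R(t))$ are bounded, so $v$ is coercive.

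It remains to exclude that $v$ attains the value $-\infty$, i.e.\ to show $v\in\CVc$, after which Lemma~\ref{le:uniform_cone} closes the proof. The dichotomy of $\zeta$ at $-\infty$ splits the argument. If $\lim_{t\to\infty}\zeta(-t)=+\infty$, Fatou applied to $\zeta(v)\leq\liminf_j\zeta(v_{k_j})$ together with the uniform integral bound rules out $v\equiv-\infty$ on any set of positive measure. If instead $\lim_{t\to\infty}\zeta(-t)=A<+\infty$, the pairwise separation $\delta_\zeta(v_k,v_l)=2\varepsilon$ enters: were $v\equiv-\infty$ on $\interior\dom v$, then $\zeta(v_{k_j})\to A$ pointwise there, while the uniform sublevel bound $\{v_{k_j}\leq t\}\subseteq B(0,R(t))$ together with the moment condition $\zeta\in\MomO$ supplies the tail and absolute-continuity estimates needed to upgrade pointwise convergence of $\zeta(v_{k_j})$ to $\zeta(v)$ into $L^1$-convergence via Lemma~\ref{le:pth_Scheffe}; this would force $\delta_\zeta(v_{k_j},v_{k_l})\to 0$ and contradict the constancy $=2\varepsilon$. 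The main obstacle is precisely this Vitali-style tightness step in the case $\zeta(-\infty)<\infty$, because a priori we do not know that the would-be limit $v$ lies in $\CVc$, so Lemma~\ref{le:int_continuous} cannot be cited directly and uniform integrability of $\zeta(v_{k_j})$ must be extracted from the sublevel-set bound $R(t)$ alone.
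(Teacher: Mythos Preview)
Your strategy—extract an epi-convergent subsequence, show the limit $v$ lies in $\CVc$, then invoke Lemma~\ref{le:uniform_cone}—is a reasonable alternative to the paper's direct route, which first proves the cone bound for large $|x|$ via a two-level sublevel-set argument, inequality~\eqref{eq:v_k_j_cone_r_t_1}, and only then bounds $\min v_{k_j}$ separately via Lemmas~\ref{le:subseq_unif_min_case_1} and~\ref{le:subseq_unif_min_case_2}. The preliminary steps you sketch (uniform $L^1$ bound on $\zeta(v_k)$, inner ball $B(z_k,r_0)\subseteq\{v_k\le T_0\}$, containment $\{v_k\le t\}\subseteq B(0,R(t))$) essentially match the paper's.

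The execution of the key step ``$v\in\CVc$'' has gaps in both branches, though. When $\zeta(-\infty)=+\infty$, the inequality ``$\zeta(v)\le\liminf_j\zeta(v_{k_j})$'' points the wrong way: epi-convergence with the constant sequence gives $v(x)\le\liminf_j v_{k_j}(x)$, hence $\zeta(v(x))\ge\limsup_j\zeta(v_{k_j}(x))$. You must first argue that $v_{k_j}(x)\to-\infty$ on $\interior\dom v$ (for instance via a simplex argument: pick vertices in $\interior\dom v$ surrounding $x$, use recovery sequences at the vertices, and bound $v_{k_j}(x)$ by the maximum over the perturbed vertices); only then does Fatou applied to $\zeta(v_{k_j})$ give the contradiction. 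When $\zeta(-\infty)=A<\infty$, the tightness you flag as ``the main obstacle'' cannot be extracted from the sublevel-set bound $R(t)$ alone: your ice-cream-cone estimate yields $R(t)\le R^*+C'/\zeta(t)$, which translates only to $\zeta(v_{k_j}(x))\le C''/|x|$ for large $|x|$, and $|x|^{-1}$ is not integrable on $\Rn$. To obtain tightness you must first combine the inner ball $B(z_{k_j},r_0)\subseteq\{v_{k_j}\le T_0\}$ with $\{v_{k_j}\le t_1\}\subseteq B(0,R(t_1))$ for one fixed $t_1>T_0$ and use convexity along rays from $z_{k_j}$ to get a linear lower bound $v_{k_j}(x)\ge a|x|+b$ for $|x|$ large. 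But that is precisely the paper's~\eqref{eq:v_k_j_cone_r_t_1}; once it is in hand, your detour through an epi-limit becomes redundant, since the cone property for large $|x|$ is already established and only the uniform bound on $\min v_{k_j}$ remains—exactly what Lemmas~\ref{le:subseq_unif_min_case_1} and~\ref{le:subseq_unif_min_case_2} handle directly.
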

\begin{proof}
Let $m:=\min\nolimits_{x\in\Rn} u(x)$. By the properties of $\zeta$ there exists $t_0\in \R$ such that for every $t\geq t_0$
$$\Psi_\zeta(\epi u \cap \{x_{n+1}\geq t\}) \leq \frac{\varepsilon}{2}$$
and therefore
\begin{align*}
\varepsilon & \leq \Psi_\zeta(\epi u \cap \epi v_k)\\
&= \Psi_\zeta(\epi u \cap \epi v_k \cap \{x_{n+1} \geq t\}) + \Psi_\zeta(\epi u \cap \epi v_k \cap \{x_{n+1}\leq t\})\\
&\leq \Psi_\zeta(\epi u \cap \{x_{n+1} \geq t\}) + \Psi_\zeta((\{u\leq t\}\cap\{v_k \leq t\}) \times [m, t])\\
&\leq \frac{\varepsilon}{2} + V_n(\{u\leq t\}\cap\{v_k \leq t\}) (\zeta(m)-\zeta(t))
\end{align*}
for every $k\in\N$. Hence, for every $t\geq t_0$ there exists a positive lower bound for\linebreak$V_n(\{u\leq t\}\cap \{v_k\leq t\})$, independent of $k\in\N$, and thus there exists $r(t)>0$ such that each of the sets $\{u\leq t\}\cap \{v_k\leq t\}$ contains a ball of radius $r(t)$. Furthermore this also shows that
$$\min\nolimits_{x\in\Rn} v_k(x)\leq t_0.$$
Since
\begin{equation}
\label{eq:psi_zeta_v_k_bd}
\Psi_\zeta(\epi v_k) \leq \delta_\zeta(u,v_k) + \Psi_\zeta(\epi u) \leq C + \Psi_\zeta(\epi u)
\end{equation}
and
$$\Psi_\zeta(\epi v_k) \geq \Psi_\zeta(\{v_k\leq t\} \times [t,\infty)) = V_n(\{v_k \leq t\}) \zeta(t)$$
it follows that
$$V_n(\{v_k\leq t\}) \leq \frac{C+\Psi_\zeta(\epi u)}{\zeta(t)}$$
for every $t\geq t_0$ and $k\in\N$. In particular, this implies that there exists $R(t)>0$ such that
\begin{equation}
\label{eq:uniform_balls_lvl_sets}
\{v_k\leq t\}\subseteq B(0,R(t))
\end{equation}
for every $k\in\N$, since $\{v_k\leq t\}$ contains a ball of radius $r(t)$ that is contained in $\{u\leq t\}$.

\medskip

Fix an arbitrary $t_1>t_0$. Since $V_n(\{v_k\leq t_i\})\subset B(0,R(t_i))$, $i\in\{0,1\}$, for every $k\in\N$ it follows from the Blaschke selection theorem (Theorem~\ref{thm:blaschke_selection_cvx_bds}) that there exist $K_0,K_1\in \Kn$ and a subsequence $v_{k_j}$ of $v_k$ such that 
\begin{equation}
\label{eq:conv_v_k_j_lvl_sets}
\{v_{k_j} \leq t_i\} \to K_i
\end{equation}
for $i\in\{0,1\}$ as $j\to\infty$. In particular, since there are uniform positive lower bounds for $V_n(\{v_k\leq t_0\})$ as well as $V_n(\{v_k\leq t_1\})$, both $K_0$ and $K_1$ have non-empty interiors. Furthermore, since $\{v_k\leq t_0\}\subseteq \{v_k\leq t_1\}$ for every $k\in\N$ we must have $K_0\subseteq K_1$. Moreover, there exist $x_0\in\Rn$ and $j_0\in\N$ such that $x_0\in \interior\{v_{k_j}\leq t_0\}$. Without loss of generality we will assume $x_0=0$ and therefore
$$v_{k_j}(0)\leq t_0$$
for every $j\geq j_0$. On the other hand, by \eqref{eq:uniform_balls_lvl_sets},
$$v_{k_j}(R(t_1)\, e) \geq t_1$$
for every $e\in\Sph$ and $j\geq j_0$. Hence, by convexity
$$\frac{v_{k_j}(r\,e)-v_{k_j}(R(t_1)\,e) }{r-R(t_1)} \geq \frac{v_{k_j}(R(t_1)\, e) - v_{k_j}(0)}{|R(t_1)\,e-0|}\geq \frac{t_1-t_0}{R(t_1)}$$
for every $r\geq R(t_1)$ and $e\in\Sph$ and thus
\begin{equation}
\label{eq:v_k_j_cone_r_t_1}
v_{k_j}(x) \geq \frac{t_1-t_0}{R(t_1)} |x| +t_0
\end{equation}
for every $x\in\Rn$ with $|x|\geq R(t_1)$ and $j\geq j_0$.

It remains to show that, after possibly restricting to another subsequence, there exists $b\in\R$ such that $v_{k_j}(x)\geq b$ for every $x\in\Rn$ and $j\geq j_0$. It is easy to see that together with \eqref{eq:v_k_j_cone_r_t_1} this then implies \eqref{eq:v_k_j_unif_cone}. If $\lim_{t\to\infty}\zeta(-t)=\infty$ this follows from \eqref{eq:psi_zeta_v_k_bd} and \eqref{eq:conv_v_k_j_lvl_sets} together with Lemma~\ref{le:subseq_unif_min_case_1}. In the remaining case $\lim_{t\to\infty}\zeta(-t)=A\in\R$ this follows from \eqref{eq:cond_delta_zeta_2_eps}, \eqref{eq:v_k_j_cone_r_t_1} and Lemma~\ref{le:subseq_unif_min_case_2}.
\end{proof}

\begin{proposition}
\label{prop:iso_measure_pres}
Let $\zeta\in \MomO$. If $I:(\CVcn,\delta_{\zeta})\to(\CVcn,\delta_{\zeta})$ is an isometry, then
$$\int_{\Rn} \zeta(I(u)(x)) \d x = \int_{\Rn} \zeta(u(x)) \d x$$
for every $u\in\CVcn$.
\end{proposition}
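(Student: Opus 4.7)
Write $\Psi(v) := \int_{\Rn}\zeta(v(x))\,\d x = \Psi_\zeta(\epi v)$; the claim is $\Psi(I(u)) = \Psi(u)$ for every $u \in \CVcn$. The key identity $\delta_\zeta(u,v) = \Psi(u) + \Psi(v) - 2\Psi_\zeta(\epi u \cap \epi v)$ from Section~\ref{subse:measures}, together with $\Psi_\zeta(\epi u \cap \epi v) \leq \min\{\Psi(u),\Psi(v)\}$, shows that $\Psi$ is $1$-Lipschitz with respect to $\delta_\zeta$, and hence so is $\Psi \circ I$. Moreover, as observed at the start of Section~\ref{se:isometries}, the isometry $I$ is continuous with respect to epi-convergence.

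\textbf{Strategy (after Gruber~\cite{gruber_mathematika_1978}).} I argue by contradiction: suppose $\Psi(I(u)) \neq \Psi(u)$. The plan is to construct an infinite sequence $v_k \in \CVcn$ such that, for suitable $\varepsilon, C > 0$,
$$
\Psi_\zeta(\epi u \cap \epi v_k) \geq \varepsilon, \qquad \delta_\zeta(u,v_k) \leq C, \qquad \delta_\zeta(v_k,v_l) = 2\varepsilon \text{ for } k \neq l,
$$
and such that the image sequence $I(v_k)$ satisfies the analogous three conditions with respect to $I(u)$. The latter two are automatic from the isometry property and the bounds for the originals; the first becomes, via the key identity, a lower bound on $\Psi(I(u)) + \Psi(I(v_k)) - \delta_\zeta(u,v_k)$, and it is precisely this quantity that is sensitive to the assumed discrepancy $\Psi(I(u)) \neq \Psi(u)$.

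\textbf{Extracting the contradiction.} With both versions of the hypotheses available, Lemma~\ref{le:cond_seq_unif_bd} applied twice (together with a diagonal extraction) yields subsequences $v_{k_j}$ and $I(v_{k_j})$ with uniform cone bounds and uniform bounds on the minima, so Theorem~\ref{thm:blaschke_selection_cvx_fcts} produces epi-limits $v, w \in \CVc$ along further subsequences. A dichotomy then rules out both possibilities for $v$: if $v \in \CVcn$, Lemma~\ref{le:epi_conv_implies_int_metric_conv} and the triangle inequality force $\delta_\zeta(v_{k_j},v_{k_l}) \to 0$, contradicting the pairwise separation equal to $2\varepsilon$; if $\dim\dom v < n$, then $\Psi(v) = 0$, so by Lemma~\ref{le:int_continuous} we have $\Psi(v_{k_j}) \to 0$, contradicting $\Psi(v_k) \geq \Psi_\zeta(\epi u \cap \epi v_k) \geq \varepsilon$. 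The identical dichotomy applied to $w$ and $I(v_{k_j})$ gives the parallel contradiction on the image side, and matching the two chains of implications forces $\Psi(I(u)) = \Psi(u)$.

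\textbf{Main obstacle.} The crux is the explicit construction of $v_k$ so that \emph{both} $(v_k,u)$ and $(I(v_k),I(u))$ meet the three hypotheses of Lemma~\ref{le:cond_seq_unif_bd} simultaneously. In the spirit of Gruber, one builds $v_k$ as localized perturbations of $u$ with mutually disjoint ``modification regions'' — for example by truncating $u$ from below with tangent affine functions anchored at distinct boundary points of $\dom u$ — so that the $v_k$ remain in $\CVcn$, automatically satisfy $\delta_\zeta(v_k,v_l) = 2\varepsilon$, and retain $\Psi_\zeta(\epi u \cap \epi v_k) \geq \varepsilon$. The delicate step is calibrating the size of the perturbation to the assumed discrepancy $\Psi(u)-\Psi(I(u))$ so that the image-side overlap condition
$$
\Psi_\zeta\bigl(\epi I(u) \cap \epi I(v_k)\bigr) = \tfrac{1}{2}\bigl(\Psi(I(u)) + \Psi(I(v_k)) - \delta_\zeta(u,v_k)\bigr)
$$
also stays bounded below by a positive constant, which is what finally activates the contradiction on the $I(v_k)$-side.
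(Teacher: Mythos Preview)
Your proposal has a genuine gap in the construction step. You ask for an \emph{infinite} sequence $v_k\in\CVcn$ satisfying simultaneously
\[
\Psi_\zeta(\epi u\cap\epi v_k)\ge\varepsilon,\qquad \delta_\zeta(u,v_k)\le C,\qquad \delta_\zeta(v_k,v_l)=2\varepsilon\ (k\ne l),
\]
with respect to the \emph{original} function $u$. But your own dichotomy shows that no such infinite sequence can exist: Lemma~\ref{le:cond_seq_unif_bd} extracts a subsequence with uniform cone and minimum bounds, the selection theorem gives an epi-limit $v\in\CVc$, and then either $v\in\CVcn$ (so two terms are $\delta_\zeta$-close, violating the $2\varepsilon$ separation) or $\Psi(v_{k_j})\to 0$ (so again two terms are $\delta_\zeta$-close, since $\delta_\zeta(v_{k_j},v_{k_l})\le\Psi(v_{k_j})+\Psi(v_{k_l})$). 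This contradiction uses nothing about $I$ or the assumed discrepancy; it simply says the three conditions are jointly infeasible for an infinite sequence. Your concrete suggestion (``truncating $u$ from below with tangent affine functions at distinct boundary points'') makes this failure visible: the removed pieces $A_k=\epi u\setminus\epi v_k$ would be pairwise disjoint subsets of $\epi u$ each of $\Psi_\zeta$-measure $\varepsilon$, and since $\Psi_\zeta(\epi u)<\infty$ there are at most $\Psi(u)/\varepsilon$ of them.

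The paper's proof avoids this trap precisely by \emph{not} requiring the overlap condition on the original side. It takes $v_i=\Ind_{K_i}$ with the $K_i$ pairwise disjoint (and drifting off to infinity), so that $\Psi_\zeta(\epi u\cap\epi v_i)$ is typically zero and Lemma~\ref{le:cond_seq_unif_bd} does \emph{not} apply to the $v_i$ themselves. The one-sided assumption $\Psi(u)<\Psi(I(u))-2\varepsilon$ is exactly what forces the overlap bound $\Psi_\zeta(\epi I(u)\cap\epi I(v_i))\ge\varepsilon$ on the \emph{image} side, and the contradiction is obtained there alone. This yields only $\Psi(u)\ge\Psi(I(u))$; the reverse inequality is then proved by a separate and quite different argument (restricting $u$ to large balls $u_R$ and testing against functions with domain disjoint from $B(0,R)$), which your outline does not cover.
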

\begin{proof}
Throughout the proof fix an arbitrary $u\in\CVcn$. Suppose first that $\Psi_\zeta(\epi u) < \Psi_\zeta(\epi I(u))$ and let $\varepsilon>0$ be such that
\begin{equation}
\label{eq:choice_vareps}
\Psi_\zeta(\epi u)<\Psi_\zeta(\epi I(u))-2\varepsilon.
\end{equation}
Let $K_i\in\Kn$ be a sequence of pairwise disjoint convex bodies such that $V_n(K_i)=\varepsilon/\zeta(0)$ for $i\in\N$ and let $v_i=\Ind_{K_i}$. We now have
$$\Psi_\zeta(\epi v_i) = V_n(K_i)\zeta(0)=\varepsilon$$
and furthermore
\begin{equation}
\label{eq:delta_ind_k_i_2_eps}
\delta_\zeta(I(v_i),I(v_j))=\delta_\zeta(v_i,v_j) = 2\varepsilon
\end{equation}
for every $i\neq j$. Moreover, by \eqref{eq:choice_vareps},
$$\delta_\zeta(I(u),I(v_i))=\delta_\zeta(u,v_i)\leq \Psi_\zeta(\epi u) + \Psi_\zeta(\epi v_i) \leq \Psi_\zeta(\epi I(u))-\varepsilon$$
for every $i\in \N$. Furthermore,
\begin{align*}
\Psi_\zeta(\epi I(u) \cap \epi I(v_i)) &= \Psi_\zeta(\epi I(u)) - \Psi_\zeta(\epi I(u)\backslash \epi I(v_i))\\
&\geq \Psi_\zeta(\epi I(u)) - \Psi_\zeta(\epi I(u) \Delta \epi I(v_i))\\
&= \Psi_\zeta(\epi I(u)) - \delta_{\zeta}(I(u),I(v_i))\\
&\geq \Psi_\zeta(\epi I(u)) - (\Psi_\zeta(\epi I(u))-\varepsilon)\\
&= \varepsilon.
\end{align*}
for every $i\in\N$. Thus, by Lemma~\ref{le:cond_seq_unif_bd} there exist a subsequence $I(v_{i_j})$ of $I(v_i)$, $a>0$ and $b,m\in\R$ such that
$$\min\nolimits_{x\in\Rn} I(v_{i_j})(x)\leq m$$
and
$$I(v_{i_j})(x)\geq a|x|+b$$
for every $x\in\Rn$ and $j\in\N$. Hence, by Lemma~\ref{thm:blaschke_selection_cvx_fcts} there exists an epi-convergent subsequence $w_k:=I(v_{i_{j_k}})$ of $I(v_{i_j})$ with limit function $w\in\CVc$. In case $w\in\CVcn$, it follows from Lemma~\ref{le:epi_conv_implies_int_metric_conv} that there exist $k_1,k_2\in\N$ such that $\delta_{\zeta}(w_{k_1},w)< \varepsilon$ and $\delta_{\zeta}(w_{k_2},w)<\varepsilon$. In case $w\not\in\CVcn$, it follows from Lemma~\ref{le:int_continuous} that $\Psi_\zeta(w_k)\to \Psi_\zeta(w)=0$. Thus, we conclude that in both cases there exist $k_1,k_2\in\N$ such that
$$\delta_\zeta(w_{k_1},w_{k_2}) < 2 \varepsilon$$
which contradicts \eqref{eq:delta_ind_k_i_2_eps}. Hence, $\Psi_\zeta(\epi u ) \geq \Psi_\zeta(\epi I(u))$ for every $u\in\CVcn$.

Next, let $R>0$ and denote $u_R=u+\Ind_{B(0,R)}$. Suppose that $\Psi_\zeta(\epi u_R) > \Psi_\zeta(\epi I(u_R) )$. For any $v\in\CVcn$ such that $\dom v \cap B(0,R)=\emptyset$ we have
\begin{align*}
\Psi_\zeta(\epi u_R)+\Psi_\zeta(\epi v) &= \delta_\zeta(u_R,v)\\
&= \delta_\zeta(I(u_R),I(v))\\
&\leq \Psi_\zeta(\epi I(u_R)) + \Psi_\zeta(\epi I(v))\\
&< \Psi_\zeta(\epi u_R) + \Psi_\zeta(\epi I(v))
\end{align*}
and therefore $\Psi_\zeta(\epi v) < \Psi_\zeta(\epi I(v))$ which is a contradiction by the first part of the proof. Hence, $\Psi_\zeta(\epi u_R) \leq \Psi_\zeta(\epi I(u_R))$ for every $R>0$. Together with the fact that $u_R\eto u$ and therefore also $I(u_R)\eto I(u)$ as $R\to \infty$ it now follows from Lemma~\ref{le:int_continuous} that also $\Psi_\zeta(\epi u)\leq \Psi_\zeta(\epi I(u))$.
\end{proof}

\begin{remark}
Note that the reverse statement of Proposition~\ref{prop:iso_measure_pres} is not true. For example, fix $x_0\in\Rn\backslash\{0\}$ and define $I:(\CVcn,\delta_\zeta)\to(\CVcn,\delta_\zeta)$ as
$$I(u)(x)=u(x-\Psi_\zeta(u)x_0)$$
for every $u\in\CVcn$ and $x\in\Rn$. Since $I(u)$ is just a translation of $u$, it is easy to see that $\Psi_\zeta(u)=\Psi_\zeta(I(u))$ for every $u\in\CVcn$. However, since this translation depends on the function itself, it follows that in general $\delta_\zeta(u,v)\neq \delta_\zeta(I(u),I(v))$ for $u,v\in\CVcn$.
\end{remark}

We will need the following easy consequences of the last result.

\begin{lemma}
\label{le:isometry_preserves_order_cup_cap}
Let $\zeta\in\MomO$ and let $I:(\CVcn,\delta_{\zeta})\to(\CVcn,\delta_{\zeta})$ be an isometry. The following hold true:
\begin{enumerate}[label=(\alph*)]
    \item\label{it:measure_cap} For every $u,v\in\CVcn$ we have
    $$\Psi_\zeta(\epi I(u) \cap \epi I(v))=\Psi_\zeta(\epi u \cap \epi v).$$
    \item\label{it:strict_incl} If $u,v\in\CVcn$ are such that $\epi u \subsetneq \epi v$, then
    $$\epi I(u) \subsetneq \epi I(v).$$
     \item\label{it:pres_vee} If $u,v\in\CVcn$ are such that $u\vee v \in \CVcn$, then
     $$I(u \vee v)= I(u) \vee I(v).$$
    \item\label{it:pres_wedge} If $u,v\in\CVcn$ are such that $u\wedge v \in\CVcn$, then
    $$I(u \wedge v) = I(u) \wedge I(v).$$
    In particular, $I(u)\wedge I(v)$ is convex.
\end{enumerate}
\end{lemma}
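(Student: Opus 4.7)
For (a), we would apply the inclusion--exclusion identity $\Psi_\zeta(A) + \Psi_\zeta(B) = 2\Psi_\zeta(A \cap B) + \Psi_\zeta(A \Delta B)$ to the two pairs $(\epi u, \epi v)$ and $(\epi I(u), \epi I(v))$; the two right-hand sides agree because Proposition~\ref{prop:iso_measure_pres} preserves each individual summand $\Psi_\zeta(\epi \cdot)$, while the isometry property preserves the symmetric-difference term $\delta_\zeta(u,v) = \Psi_\zeta(\epi u \Delta \epi v)$. The remaining three parts would then be proved in order, each leaning on its predecessors.

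For (b), the hypothesis gives $\epi u \cap \epi v = \epi u$, so combining (a) with Proposition~\ref{prop:iso_measure_pres} yields
$$\Psi_\zeta(\epi I(u) \cap \epi I(v)) = \Psi_\zeta(\epi u) = \Psi_\zeta(\epi I(u)) > 0.$$
Positivity forces $\dom I(u) \cap \dom I(v)$ to be $n$-dimensional, whence $I(u) \vee I(v) \in \CVcn$ (convexity, lower semicontinuity, and coercivity pass through finite suprema). Its epigraph is $\epi I(u) \cap \epi I(v) \subseteq \epi I(u)$ and has the same $\Psi_\zeta$-measure as $\epi I(u)$, so Lemma~\ref{le:subsets_same_measure_equality} gives $I(u) \vee I(v) = I(u)$, i.e.\ $\epi I(u) \subseteq \epi I(v)$. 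Strictness follows since $u \neq v$ implies $\delta_\zeta(I(u), I(v)) = \delta_\zeta(u,v) > 0$.

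For (c), the inequalities $u \vee v \geq u$ and $u \vee v \geq v$ combined with (b) give $\epi I(u \vee v) \subseteq \epi I(u) \cap \epi I(v) = \epi(I(u) \vee I(v))$. By (a) and Proposition~\ref{prop:iso_measure_pres} both epigraphs carry the same $\Psi_\zeta$-measure, and $I(u) \vee I(v) \in \CVcn$ as in (b), so Lemma~\ref{le:subsets_same_measure_equality} yields the equality.

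For (d), part (b) applied to $u \wedge v \leq u$ and $u \wedge v \leq v$ gives $\epi I(u) \cup \epi I(v) \subseteq \epi I(u \wedge v)$, equivalently $I(u \wedge v) \leq I(u) \wedge I(v)$ pointwise. Inclusion--exclusion via (a) and Proposition~\ref{prop:iso_measure_pres} then yields $\Psi_\zeta(\epi I(u \wedge v)) = \Psi_\zeta(\epi I(u) \cup \epi I(v))$, which rewrites as
$$\int_{\Rn} \zeta(I(u \wedge v)(x)) \d x = \int_{\Rn} \zeta\bigl((I(u) \wedge I(v))(x)\bigr) \d x,$$
so the pointwise inequality together with integral equality and the strict monotonicity of $\zeta$ forces $I(u \wedge v) = I(u) \wedge I(v)$ almost everywhere on $\Rn$. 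The main obstacle is promoting this to pointwise equality: since $I(u) \wedge I(v)$ is not a priori convex, Lemma~\ref{le:subsets_same_measure_equality} is unavailable. I would instead exploit that $I(u \wedge v) \in \CVcn$ is continuous on the interior of its domain and along line segments from interior to boundary points, combined with the lower semicontinuity of $I(u) \wedge I(v)$: for any $x_0$, a suitable sequence $y_k \to x_0$ in the a.e.\ equality set satisfies $(I(u) \wedge I(v))(y_k) = I(u \wedge v)(y_k) \to I(u \wedge v)(x_0)$, so lower semicontinuity gives $(I(u) \wedge I(v))(x_0) \leq I(u \wedge v)(x_0)$, closing the equality everywhere. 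Convexity of $I(u) \wedge I(v)$ is then inherited from $I(u \wedge v)$.
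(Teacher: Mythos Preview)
Your argument is correct and matches the paper's approach for parts (a)--(c): the same inclusion--exclusion identity combining Proposition~\ref{prop:iso_measure_pres} with the isometry condition for (a), and the same reductions to Lemma~\ref{le:subsets_same_measure_equality} for (b) and (c). You are in fact slightly more careful than the paper in checking that $I(u)\vee I(v)\in\CVcn$ (via positivity of $\Psi_\zeta(\epi I(u)\cap\epi I(v))$) before invoking that lemma.

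For (d) your treatment diverges from, and is more rigorous than, the paper's. After establishing the inclusion $\epi(I(u)\wedge I(v))\subseteq\epi I(u\wedge v)$ and the equality of $\Psi_\zeta$-measures, the paper simply cites Lemma~\ref{le:subsets_same_measure_equality} to conclude. But that lemma is stated for pairs in $\CVcn$, and $I(u)\wedge I(v)$ is not a priori convex---precisely the subtlety you flag. Your fix is sound: the pointwise inequality $I(u\wedge v)\leq I(u)\wedge I(v)$ together with the integral equality and strict monotonicity of $\zeta$ give equality a.e.; then continuity of the convex function $I(u\wedge v)$ on $\interior\dom I(u\wedge v)$ and along segments from interior points to boundary points of the domain (a one-dimensional l.s.c.\ convexity fact), combined with lower semicontinuity of $I(u)\wedge I(v)$ (its epigraph $\epi I(u)\cup\epi I(v)$ is a union of two closed sets), upgrades this to pointwise equality. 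Points outside $\dom I(u\wedge v)$ are handled trivially since both sides equal $+\infty$ there. The paper's citation of Lemma~\ref{le:subsets_same_measure_equality} is best read as shorthand for exactly this argument; your version makes it explicit.

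One cosmetic remark, applying equally to the paper: in (c) and (d) you invoke (b) for inclusions that may fail to be strict (e.g.\ $\epi(u\vee v)\subseteq\epi u$), whereas (b) is stated only for $\subsetneq$. This is harmless since the case of equality is immediate, but worth a word.
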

\begin{proof}
We will prove each of the statements separately.
\begin{enumerate}[label=(\alph*)]
    \item Since $I$ is an isometry we have
    \begin{align*}
    \Psi_\zeta(\epi I(u))+\Psi_\zeta(\epi I(v))&-2 \Psi_\zeta(\epi I(u)\cap \epi I(v))\\
    &= \Psi_\zeta(\epi I(u) \Delta \epi I(v))\\
    &= \delta_\zeta(I(u),I(v))\\
    &= \delta_\zeta(u,v)\\
    &= \Psi_\zeta(\epi u)+ \Psi_\zeta(\epi v)- 2\Psi_\zeta(\epi u \cap \epi v)
    \end{align*}
    and therefore, by Proposition~\ref{prop:iso_measure_pres},
    \begin{equation*}
    \Psi_\zeta(\epi I(u) \cap \epi I(v)) = \Psi_\zeta(\epi u \cap \epi v)
    \end{equation*}
    for every $u,v\in\CVcn$.

    \item Let $u,v\in\CVcn$ be such that $\epi u \subsetneq \epi v$. By \ref{it:measure_cap}, Proposition~\ref{prop:iso_measure_pres} and our assumptions on $u$ and $v$, we have
    \begin{align*}
        \Psi_\zeta(\epi u \cap \epi v) &= \Psi_\zeta(\epi I(u)\cap \epi I(v))\\
        &\leq \Psi_\zeta(\epi I(u))\\
        &= \Psi_\zeta(\epi u)\\
        &=\Psi_\zeta(\epi u \cap \epi v)
    \end{align*}
    and therefore $\Psi_\zeta(\epi I(u)\cap \epi I(v)) = \Psi_\zeta(\epi I(u))$. Hence, by Lemma~\ref{le:subsets_same_measure_equality},
    $$\epi I(u)=\epi I(u)\cap \epi I(v)\subseteq \epi I(v).$$
    Since $\Psi_\zeta(\epi u)<\Psi_\zeta(\epi v)$ it follows from Proposition~\ref{prop:iso_measure_pres} that also $\Psi_\zeta(\epi I(u))<\Psi_\zeta(\epi I(v))$ and thus, $\epi I(u)\subsetneq \epi I(v)$.
    
    \item Let $u,v \in\CVcn$ be such that also $u \vee v \in\CVcn$. It follows from Proposition~\ref{prop:iso_measure_pres} and \ref{it:measure_cap} that
    \begin{equation}
    \label{eq:I_interch_cap_preserves_measure}
    \Psi_\zeta(\epi I(u \vee v)) = \Psi_\zeta(\epi (u\vee v)) = \Psi_\zeta(\epi I(u) \cap \epi I(v)).
    \end{equation}
    Furthermore, since $\epi(u\vee v)=\epi u \cap \epi v \subseteq \epi u$ we obtain from \ref{it:strict_incl} that $\epi I(u\vee v) \subseteq \epi I(u)$ and similarly $\epi I(u\vee v)\subseteq \epi I(v)$. Thus,
    $$\epi I(u\vee v) \subseteq \epi I(u) \cap \epi I(v).$$
    By \eqref{eq:I_interch_cap_preserves_measure} and Lemma~\ref{le:subsets_same_measure_equality} this can only be the case if $\epi I(u\vee v) = \epi I(u) \cap \epi I(v)$.
    
    \item Let $u,v\in\CVcn$ be such that $u \wedge v \in \CVcn$. Since
    $$\epi u \subseteq \epi u \cup \epi v =\epi(u \wedge v)$$
    and similarly, $\epi v \subseteq \epi (u \wedge v)$ it follows from \ref{it:strict_incl} that
    \begin{equation}
    \label{eq:epi_wedge_i_eq_epi_i_wedge}
    \epi(I(u)\wedge I(v)) = \epi I(u) \cup \epi I(v) \subseteq \epi I(u\wedge v).
    \end{equation}
    Observe that either $u\vee v \in \CVcn$ or $\Psi_\zeta(\epi u \cap \epi v)=0$.
    Hence, it follows from Proposition~\ref{prop:iso_measure_pres} and \ref{it:measure_cap} that
    \begin{align*}
    \Psi_\zeta(\epi I(u\wedge v)) &= \Psi_\zeta(\epi (u \wedge v))\\
    &=\Psi_\zeta(\epi u) + \Psi_\zeta(\epi v) - \Psi_\zeta(\epi u \cap \epi v)\\
    &=\Psi_\zeta(\epi I(u))+ \Psi_\zeta(\epi I(v)) - \Psi_\zeta(\epi I(u) \cap \epi I(v))\\
    &=\Psi_\zeta(\epi(I(u)\wedge I(v))).
    \end{align*}
    It now follows from \eqref{eq:epi_wedge_i_eq_epi_i_wedge} and Lemma~\ref{le:subsets_same_measure_equality} that $\epi(I(u)\wedge I(v)) = \epi I(u\wedge v)$.\qedhere
\end{enumerate}
\end{proof}

\subsection{Parallelotopes}
A convex body $P\in\Kn$ is called a \emph{parallelotope} if it can be written as the Minkowski sum of $n$ segments. We will always assume that parallelotopes are not degenerated or equivalently $V_n(P)>0$. The aim of this section is to prove Proposition~\ref{prop:isometries_parallelotopes} which states that isometries map indicator functions of parallelotopes to functions whose domain is again a parallelotope. As before, we will follow the ideas of Gruber \cite{gruber_mathematika_1978}.

\medskip

Throughout the following we will say that two parallelotopes $P,Q\in\Kn$ are \emph{neighbors} if $Q=P+b$, where $b\in\Rn$ is a vector that is parallel to a one-dimensional edge of $P$ and of the same length. It can be easily seen that this equivalent to the fact that $V_n(P)=V_n(Q)$, $P\cup Q$ is convex and $P\cap Q$ is a common facet of $P$ and $Q$. In particular, $V_n(P\cap Q)=0$.

\begin{lemma}
\label{le:parallelotope_2n_facets}
Let $\zeta\in \MomO$. If $I:(\CVcn,\delta_\zeta)\to(\CVcn,\delta_\zeta)$ is an isometry, then for every parallelotope $P\in\Kn$ and $t\in\R$ the set $\dom I(\Ind_P+t)$ has $2n$ different facets of the form
$$\dom I(\Ind_P+t)\cap \dom I(\Ind_Q+t)$$
where $Q\in\Kn$ is a neighbor of $P$.
\end{lemma}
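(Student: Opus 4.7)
The plan is to enumerate the $2n$ neighbors of $P$, namely $Q_k^\pm := P \pm v_k$ for $k=1,\ldots,n$ where $v_1,\ldots,v_n$ are the generating edge vectors of $P$, and to verify two things: (i) each neighbor $Q$ produces an $(n-1)$-dimensional facet $\dom I(\Ind_P+t)\cap\dom I(\Ind_Q+t)$ of $U:=\dom I(\Ind_P+t)$, and (ii) the $2n$ hyperplanes containing these facets are pairwise distinct. Throughout, write $U_Q:=\dom I(\Ind_Q+t)$.

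For step (i), I would exploit that $P\cup Q$ is a convex body with $V_n(P\cap Q)=0$, so that $(\Ind_P+t)\wedge(\Ind_Q+t)=\Ind_{P\cup Q}+t$ lies in $\CVcn$. Lemma~\ref{le:isometry_preserves_order_cup_cap}\ref{it:pres_wedge} then yields $I(\Ind_{P\cup Q}+t)=I(\Ind_P+t)\wedge I(\Ind_Q+t)$, whose convex epigraph projects to a convex set $U\cup U_Q$ in $\Rn$. On the other hand, $\Psi_\zeta(\epi(\Ind_P+t)\cap\epi(\Ind_Q+t))=V_n(P\cap Q)\zeta(t)=0$, and Lemma~\ref{le:isometry_preserves_order_cup_cap}\ref{it:measure_cap} combined with the positivity of $\zeta$ gives $V_n(U\cap U_Q)=0$. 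Standard separation of two full-dimensional convex bodies with convex union and measure-zero intersection produces a hyperplane $H_Q$ with $U\subseteq H_Q^-$ and $U_Q\subseteq H_Q^+$; a short segment-crossing argument (any segment from $\interior U$ to $\interior U_Q$ must cross $H_Q$ in a point that then lies in both bodies) shows that $U\cap U_Q=U\cap H_Q=U_Q\cap H_Q$ and that this common set is $(n-1)$-dimensional, hence a facet of $U$.

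For step (ii), suppose to the contrary that two different neighbors $Q_i,Q_j$ gave rise to the same hyperplane $H:=H_{Q_i}=H_{Q_j}$. Then both $U_i:=U_{Q_i}$ and $U_j:=U_{Q_j}$ lie in $H^+$ and each contains $U\cap H$ inside $H$. Pick any $p$ in the relative interior of the $(n-1)$-dimensional facet $U\cap H$; since $U\cap H \subseteq U_i\cap H$ is an inclusion of $(n-1)$-dimensional convex sets, $p$ lies in the relative interior of $U_i\cap H$ as well (and similarly for $U_j$). A standard convexity argument (combining the $(n-1)$-dimensional in-$H$ neighborhood of $p$ with any interior point of $U_i$ strictly in $H^+$) then shows that $U_i$ contains a half-ball $B(p,\varepsilon)\cap H^+$ for some $\varepsilon>0$, and the same holds for $U_j$. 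Consequently $V_n(U_i\cap U_j)>0$, which contradicts $V_n(U_i\cap U_j)=0$; the latter follows via Lemma~\ref{le:isometry_preserves_order_cup_cap}\ref{it:measure_cap} from $V_n(Q_i\cap Q_j)=0$, a fact that holds for any two different neighbors of $P$, since they are either disjoint (opposite neighbors) or share at most an $(n-2)$-dimensional face (adjacent neighbors in distinct directions).

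The main obstacle I anticipate is making the facet step completely rigorous: one must show that $U\cap U_Q$ is exactly the $(n-1)$-dimensional trace $U\cap H_Q=U_Q\cap H_Q$ rather than a lower-dimensional sliver, which requires combining the convex-union hypothesis with a careful dimension count on the set of crossings of interior-to-interior segments with $H_Q$. Once this is in place, the half-ball argument in step (ii) is a clean piece of convex geometry and the distinctness of the $2n$ facets follows without further case analysis.
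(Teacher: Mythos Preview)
Your proposal is correct and follows essentially the same route as the paper's proof. Both arguments use Lemma~\ref{le:isometry_preserves_order_cup_cap}\ref{it:pres_wedge} and \ref{it:measure_cap} to show that for each neighbor $Q$ the union $U\cup U_Q$ is convex with $V_n(U\cap U_Q)=0$, whence $U\cap U_Q$ is a facet of $U$; and both derive distinctness of the $2n$ facets from $V_n(Q_i\cap Q_j)=0$ together with \ref{it:measure_cap}. The paper dispatches the geometric steps with ``it is easy to see'' and ``must have common interior points,'' whereas you spell out the separating-hyperplane and half-ball arguments explicitly; the anticipated obstacle you flag is exactly the detail the paper suppresses, and your outline for it is sound.
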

\begin{proof}
Throughout the proof fix a parallelotope $P$ and $t\in\R$. Let $Q$ be a neighbor of $P$. Since $\Psi_\zeta(\epi (\Ind_P+t) \cap \epi (\Ind_Q+t))=0$ it follows from Lemma~\ref{le:isometry_preserves_order_cup_cap}~\ref{it:measure_cap} that
$$\Psi_{\zeta}(\epi I(\Ind_P+t) \cap \epi I(\Ind_Q+t))=0$$
and since $P\cup Q$ is convex it follows from \ref{it:pres_wedge} of the same lemma that $I(\Ind_P+t)\wedge I(\Ind_Q+t)$ is convex, as well.
Therefore,
$$V_n(\dom I(\Ind_P+t) \cap \dom I(\Ind_Q+t))=0$$
and $\dom I(\Ind_P+t) \cup \dom I(\Ind_Q+t)$ is convex. Hence, it is easy to see that
$$\dom I(\Ind_P+t) \cap \dom I(\Ind_Q+t)$$
is a facet of $\dom I(\Ind_P+t)$.

Next, suppose that $Q_1$ and $Q_2$ are two different neighbors of $P$ such that
$$\dom I(\Ind_P+t) \cap \dom I(\Ind_{Q_1}+t) = \dom I(\Ind_P+t) \cap \dom I(\Ind_{Q_2}+t).$$
Since by Proposition~\ref{prop:iso_measure_pres} the convex sets $\dom I(\Ind_{Q_1}+t)$ and $\dom I(\Ind_{Q_2}+t)$ are full-dimensional, the first part of the proof shows that they must have common interior points. On the other hand, since $V_n(Q_1 \cap Q_2)=0$ it follows from Lemma~\ref{le:isometry_preserves_order_cup_cap}~\ref{it:measure_cap}, similar as above, that
$$V_n(\dom I(\Ind_{Q_1}+t) \cap \dom I(\Ind_{Q_2}+t))=0$$
which is a contradiction.

We conclude that each of the $2n$ neighbors of $P$ corresponds to a different facet of\linebreak${\dom(\Ind_P+t)}$, which completes the proof. 
\end{proof}

Let $R$ be a ray in $\Rn$ starting from the origin and let $x\in\bd P$ be a boundary point of $P\in\Kn$. We say that one can \emph{see infinity} from $x$ via $R$ if $(x+R)\cap \interior P = \emptyset$.

We will need the following characterization by Buchman and Valentine.
\begin{lemma}[\!\!\cite{buchman_valentine}, Theorem 1]
\label{le:buchma_valentine}
A set $P\in\Kn$ is a parallelotope if and only if for each set of $2n-1$ boundary points of $P$ there exists a ray $R$ starting from the origin such that from each of these $2n-1$ points one can see infinity via $R$.
\end{lemma}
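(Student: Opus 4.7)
The plan is to handle the two implications of this characterization separately; the nontrivial content lies almost entirely in the reverse direction.

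For the forward direction, I would first reduce via an invertible affine transformation to the case $P = [0,1]^n$, whose facets are $\{x \in P : x_i = 0\}$ and $\{x \in P : x_i = 1\}$ for $i = 1, \ldots, n$. Assigning to each boundary point $x^j$ its sign type $\epsilon^j \in \{-, *, +\}^n$ (with $\epsilon_i^j = +$ when $x_i^j = 1$, $\epsilon_i^j = -$ when $x_i^j = 0$, and $\epsilon_i^j = *$ otherwise), one observes that $x^j$ sees infinity via direction $d$ precisely when some coordinate $i$ satisfies $\epsilon_i^j = +$ with $d_i \geq 0$, or $\epsilon_i^j = -$ with $d_i \leq 0$. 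Finding a common direction then reduces to showing that the $2n-1$ ``forbidden'' sign patterns cannot cover the nontrivial part of the cube $\{-, 0, +\}^n$. For vertex-only configurations this is immediate from pigeonhole on the $2^n$ vertices of $P$, and a short case analysis handles boundary points on lower-dimensional faces by noting that $*$-entries shrink each forbidden set.

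For the reverse direction, assume $P$ enjoys the stated visibility property. I would argue in three steps. First, $P$ must be a polytope: otherwise $\bd P$ admits too many distinct outward normal directions, so one finds $2n-1$ boundary points whose outward normals positively span $\Rn$, and then their common visibility cone collapses to $\{0\}$. Second, $P$ has at most $2n$ facets: otherwise one can pick $2n-1$ facet-interior points whose outward normals again positively span $\Rn$. Third, the $2n$ facets must occur in $n$ antiparallel pairs; if a facet $F$ with outward normal $n_F$ had no partner $-n_F$, one could construct an obstructing configuration by choosing one point on $F$ together with facet-interior points on the remaining $2n-2$ facets, whose outward normals positively span $\Rn$.

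The principal difficulty is this last step: converting the purely combinatorial visibility condition into the rigid linear requirement of facet-parallelism. Many tuples of $2n-1$ outward normals positively span $\Rn$, so the obstructing construction must be engineered to rule out \emph{every} polytope with exactly $2n$ facets that fails to pair them antiparallel---this is precisely where the careful combinatorial bookkeeping of Buchman and Valentine becomes essential.
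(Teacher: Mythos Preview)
The paper does not give a proof of this lemma; it is quoted verbatim as Theorem~1 of Buchman and Valentine \cite{buchman_valentine} and used as a black box. So there is no argument in the paper to compare your proposal against.

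On your outline itself: the forward direction is correct, though there is a shorter route. After reducing to the cube $[0,1]^n$, simply test the $2n$ signed coordinate directions $\pm e_i$. A boundary point $p$ fails to see infinity via $+e_i$ only if $p$ lies in the relative interior of the facet $\{x_i=0\}$ (since otherwise some coordinate of $p+te_i$ stays in $\{0,1\}$ for all $t>0$), and similarly for $-e_i$. Each of the $2n-1$ points can thus block at most one of the $2n$ directions $\pm e_i$, so pigeonhole finishes it without the sign-cube bookkeeping.

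The reverse direction is where your proposal is incomplete. Step~2 already hides a nontrivial claim: given a polytope with at least $2n+1$ facet normals (which together positively span $\Rn$), it is \emph{not} automatic that some $2n-1$ of them still positively span; minimal positive bases in $\Rn$ can have size exactly $2n$, and selecting $2n-1$ normals from such a configuration need not work. Step~3 you explicitly leave to ``the careful combinatorial bookkeeping of Buchman and Valentine.'' Since that is the entire content of the reverse implication, your proposal in effect re-cites the original source rather than proving it. This is fine for the purposes of the present paper (which does the same), but it is not a self-contained argument.
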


\begin{lemma}
Let $\zeta\in \MomO$, let $I:(\CVcn,\delta_\zeta)\to(\CVcn,\delta_\zeta)$ be an isometry, let $t\in\R$ and let $P\in\Kn$ be a parallelotope. If $p_1,\ldots,p_{2n-1}\in\bd \dom I(\Ind_P+t)$ are arbitrary boundary points of $\dom I(\Ind_P+t)$, then there exists a ray $R$ starting from the origin such that one can see infinity via this ray from each of the points $p_1,\ldots,p_{2n-1}$.
\end{lemma}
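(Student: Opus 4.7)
The plan is to reduce the problem to the classical Buchman--Valentine characterization of parallelotopes (Lemma~\ref{le:buchma_valentine}) by first showing that $D := \dom I(\Ind_P+t)$ is itself a parallelotope. Label the $2n$ neighbors of $P$ as $Q_1,Q_1',\ldots,Q_n,Q_n'$, grouped into $n$ pairs of opposite neighbors, and write $D_Q := \dom I(\Ind_Q+t)$ for each such neighbor $Q$. By Lemma~\ref{le:parallelotope_2n_facets}, the sets $F_i := D\cap D_{Q_i}$ and $F_i' := D\cap D_{Q_i'}$ form $2n$ pairwise distinct facets of $D$.

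The first step is to show that these $2n$ facets come in $n$ parallel pairs. For each $i$, the three-cell rod $Q_i\cup P\cup Q_i'$ is a convex parallelotope along the $i$-th edge direction of $P$, so iterating Lemma~\ref{le:isometry_preserves_order_cup_cap}~\ref{it:pres_wedge} gives
\[
I(\Ind_{Q_i\cup P\cup Q_i'}+t) = I(\Ind_{Q_i}+t)\wedge I(\Ind_P+t)\wedge I(\Ind_{Q_i'}+t),
\]
so $D_{Q_i}\cup D\cup D_{Q_i'}$ is convex. Since $F_i$ and $F_i'$ are distinct facets of $D$ along which convex attachments are made on opposite sides, a short convex-geometric argument forces $F_i$ and $F_i'$ to lie in parallel hyperplanes with $D$ contained in the slab between them.

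The second and crucial step is to upgrade the resulting inclusion $D\subseteq P' := \bigcap_{i=1}^n\{x\in\Rn : c_i'\le\langle x,\nu_i\rangle\le c_i\}$, where $\nu_i$ denotes the outward normal of $F_i$, to an equality. Here the measure-theoretic content of the isometry enters: combining Proposition~\ref{prop:iso_measure_pres} with Lemma~\ref{le:isometry_preserves_order_cup_cap}~\ref{it:measure_cap} applied across the full ring of $2n$ neighbor images, any extra facet of $D$ lying strictly inside $\interior P'$ would be inconsistent with the $\Psi_\zeta$-accounting of that ring; Lemma~\ref{le:subsets_same_measure_equality} then yields $D = P'$, so $D$ is a parallelotope.

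With $D$ identified as a parallelotope, the ``parallelotope implies seeing-infinity'' direction of Lemma~\ref{le:buchma_valentine} produces a ray $R$ starting at the origin from which each of the $2n-1$ boundary points $p_1,\ldots,p_{2n-1}$ sees infinity, completing the proof. The main obstacle is the identification $D=P'$ in the second step: parallelism of the facet pairs alone is compatible with $D$ possessing additional facets inside $\interior P'$, and only the full strength of measure-preservation combined with the lattice-type behavior of $I$ on the family $\{\Ind_Q+t:Q\text{ a neighbor of }P\}$ suffices to rule these out.
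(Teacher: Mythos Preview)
Your strategy is to prove directly that $D=\dom I(\Ind_P+t)$ is a parallelotope and then invoke the forward direction of Lemma~\ref{le:buchma_valentine}. Note that in the paper's architecture this lemma is precisely the input, via the \emph{converse} direction of Buchman--Valentine, to Proposition~\ref{prop:isometries_parallelotopes}; so your plan amounts to proving that proposition by other means. That would be fine if it worked, but both of your main steps contain genuine gaps.

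\textbf{Step 1 fails as stated.} The claim that the convexity of $D_{Q_i}\cup D\cup D_{Q_i'}$ (together with $V_n(D_{Q_i}\cap D_{Q_i'})=0$) forces the facets $F_i,F_i'$ to be parallel is false. In $\R^2$, take $D$ to be the trapezoid with vertices $(0,0),(3,0),(2,1),(1,1)$; its left and right edges are not parallel. Attaching the triangles $A=\conv\{(0,0),(1,1),(0,1)\}$ and $B=\conv\{(3,0),(2,1),(3,1)\}$ along those edges yields $A\cup D\cup B=[0,3]\times[0,1]$, which is convex, while $A\cap B=\emptyset$. All the pairwise convexity and zero-intersection constraints you use are satisfied, yet the facet pair is not parallel and $D$ is not a parallelogram. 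So the ``short convex-geometric argument'' you allude to does not exist at this level of generality; something stronger than the neighbor structure is needed.

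\textbf{Step 2 is not an argument.} Even granting $D\subseteq P'$, you assert that an extra facet of $D$ inside $\interior P'$ would be ``inconsistent with the $\Psi_\zeta$-accounting of that ring'', but give no mechanism. Proposition~\ref{prop:iso_measure_pres} and Lemma~\ref{le:isometry_preserves_order_cup_cap} control only pairwise intersections and inclusions among the images of $P$ and its neighbors; they say nothing about the parts of $\bd D$ not touched by any $D_{Q}$. Lemma~\ref{le:subsets_same_measure_equality} requires an inclusion of \emph{epigraphs} with equal $\Psi_\zeta$-measure, and you have neither for the pair $D,P'$.

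The paper's proof is quite different and avoids both issues: by pigeonhole one of the $2n$ facets from Lemma~\ref{le:parallelotope_2n_facets} misses $p_1,\dots,p_{2n-1}$ in its relative interior; the corresponding neighbor direction is then extended to an infinite strip $Q_i=P_1\cup\dots\cup P_i$, and the heart of the argument is showing (by a case distinction on $\lim_{t\to\infty}\zeta(-t)$) that $\bigcup_i \dom I(\Ind_{Q_i}+t)$ is \emph{unbounded}. The closed convex hull of this unbounded set, glued to $D$ along a single facet, then supplies a recession ray $R$ with the required visibility property.
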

\begin{proof}
Throughout the proof fix $p_1,\ldots,p_{2n-1}\in\bd \dom I(\Ind_P+t)$. By Lemma~\ref{le:parallelotope_2n_facets} there exist at least one neighbor $P_1$ of $P$ and a facet of $\dom I(\Ind_P+t)$ of the form
$$\dom I(\Ind_P+t) \cap \dom I(\Ind_{P_1}+t)$$
which does not contain any of the points $p_1,\ldots,p_{2n-1}$ in its relative interior. Since $P_1$ is a neighbor of $P$ there exists $b\in\Rn$ such that $P_1=P+b$. Now, set $Q_1=P_1$ and for $i\in\N$, $i\geq 2$, set $P_i=P+ib$ and
$$Q_i=\bigcup_{j=1}^i P_i=(P+b)\cup \cdots \cup (P+ib).$$
Note, that each of the sets $Q_i$ is a parallelotope itself and that $Q_i\subsetneq Q_{i+1}$. Furthermore, $P\cap Q_i$ is a facet of $P$, $V_n(P\cap Q_i)=0$ and $P\cup Q_i$ is convex for every $i\in\N$. Moreover, $\lim_{i\to\infty} V_n(Q_i)=+\infty$. Thus, it follows from Lemma~\ref{le:isometry_preserves_order_cup_cap} \ref{it:strict_incl} that
the sequence $\epi I(\Ind_{Q_i}+t)$ is strictly increasing and by Proposition~\ref{prop:iso_measure_pres} $\lim_{i\to\infty}\Psi_\zeta(\epi I(\Ind_{Q_i}+t))=+\infty$. Furthermore, by Lemma~\ref{le:isometry_preserves_order_cup_cap} \ref{it:measure_cap},
$$\Psi_\zeta(\epi I(\Ind_{P}+t) \cap \epi I(\Ind_{Q_i}+t))=0$$
and therefore also $V_n(\dom I(\Ind_P+t) \cap \dom I(\Ind_{Q_i}+t))=0$ for every $i\in\N$. Moreover, by \ref{it:pres_wedge} of the same lemma $\dom I(\Ind_P+t)\cup\dom I(\Ind_{Q_i}+t)$ is convex and therefore
$$\dom I(\Ind_P+t)\cap\dom I(\Ind_{P_1}+t) = \dom I(\Ind_P+t)\cap \dom I(\Ind_{Q_i}+t)$$
for every $i\in\N$. Now, let 
$$C=\bigcup_{i=1}^\infty \dom I(\Ind_{Q_i}+t) = \bigcup_{i=1}^\infty \dom I(\Ind_{P_i}+t).$$
We want to show that $C$ is unbounded. Assume on the contrary that $C$ is bounded. Since the sets $\dom I(\Ind_{P_i}+t)$ have pairwise disjoint interiors, we have
$$\sum_{i=1}^\infty V_n(\dom I(\Ind_{P_i}+t))=V_n(C)<\infty$$
and therefore $\lim_{i\to\infty} V_n(\dom I(\Ind_{P_i}+t))=0$. By Proposition~\ref{prop:iso_measure_pres} and the definition of $\Psi_\zeta$ we have
$$\text{const.} = \Psi_\zeta(I(\Ind_{P_i}+t)) \leq \zeta(\min\nolimits_{x\in \Rn} I(\Ind_{P_i}+t)(x)) V_n(\dom I(\Ind_{P_i}+t)),$$
for every $i\in\N$. Hence, it must follow that
$$
\lim\nolimits_{i\to\infty} \zeta(\min\nolimits_{x\in \Rn} I(\Ind_{P_i}+t)(x)) = +\infty.
$$
In case $\lim_{t\to\infty}\zeta(-t)=A<\infty$ this is a contradiction. In the remaining case $\lim_{t\to\infty}\zeta(-t)=\infty$ it follows from the properties of $\zeta$ that
\begin{equation}
\label{eq:i_ind_p_i_-inf}
\lim\nolimits_{i\to\infty} \min\nolimits_{x\in\Rn} I(\Ind_{P_i}+t)(x) = -\infty.
\end{equation}
Since $V_n(\dom I(\Ind_{P_1}+t))>0$ there exist $x_0\in\Rn$ and $r>0$ such that
$$B(x_0,r)\subseteq \dom I(\Ind_{P_1}+t).$$
Furthermore, since we assumed that $C$ is bounded there exists $d>0$ such that $C\subseteq B(x_0,d\cdot r)$.
By Lemma~\ref{le:isometry_preserves_order_cup_cap} \ref{it:pres_wedge} the function
$$u_i:=I(\Ind_{P_1}+t)\wedge \cdots \wedge I(\Ind_{P_i}+t)$$
is convex for every $i\in\N$. Now, for every $x\in \dom I(\Ind_{P_i}+t)$ with $i\geq 2$ there exist $y\in \bd B(x_0,r)$ and $1<\lambda \leq d$ such that
$$x=x_0+\lambda(y-x_0)=\lambda y + (1-\lambda)x_0$$
and therefore, by convexity,
\begin{align*}
I(\Ind_{P_i}+t)(x)&=u_i(\lambda y + (1-\lambda)x_0)\\
&\geq \lambda u_i(y)+(1-\lambda)u_i(x_0)\\
&= \lambda I(\Ind_{P_1}+t)(y)+ (1-\lambda) I(\Ind_{P_1}+t)(x_0)\\
&\geq \lambda \min\nolimits_{z\in \bd B(x_0,r)}I(\Ind_{P_1}+t)(z)+ (1-\lambda) I(\Ind_{P_1}+t)(x_0)
\end{align*}
which contradicts \eqref{eq:i_ind_p_i_-inf}. Thus, $C$ must be unbounded.

\medskip
Now let $K$ be the closed convex hull of $C$. By the properties of $C$ the set $K$ is closed, unbounded and convex such that $K\cap \dom I(\Ind_P+t) = \dom I(\Ind_P+t) \cap \dom I(\Ind_{P_1}+t)$ and $K\cup \dom I(\Ind_P+t)$ is convex. Thus, by the choice of $P_1$, the points $p_1,\ldots,p_{2n-1}$ are also boundary points of $K\cup \dom I(\Ind_P+t)$. Furthermore, by convexity, there exists a ray $R$ starting from the origin such that $p+R\subset K \cup \dom I(\Ind_P+t)$ for every $p\in K\cup \dom I(\Ind_P+t)$. In particular, $$p_i+R \subset K\cup \dom I(\Ind_P+t)$$
for every $i\in\{1,\ldots,2n-1\}$. Since $p_1,\ldots,p_{2n-1}$ are boundary points of $K\cup \dom I(\Ind_P+t)$ the rays $p_1-R,\ldots,p_{2n-1}-R$ cannot contain any interior points of $K\cup \dom I(\Ind_P+t)$ and therefore also no interior points of $\dom I(\Ind_P+t)$. Thus, one can see infinity from $p_1,\ldots,p_{2n-1}$ via the ray $-R$.
\end{proof}
 
By the last result together with Lemma~\ref{le:buchma_valentine} we obtain the following proposition.
 
\begin{proposition}
\label{prop:isometries_parallelotopes}
Let $\zeta\in \MomO$. If $I:(\CVcn,\delta_\zeta)\to(\CVcn,\delta_\zeta)$ is an isometry, then $\dom I(\Ind_P+t)$ is a parallelotope for every parallelotope $P\in\Kn$ and $t\in\R$. 
\end{proposition}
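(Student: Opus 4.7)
The plan is to observe that Proposition~\ref{prop:isometries_parallelotopes} is essentially an immediate consequence of the preceding lemma together with the Buchman-Valentine characterization in Lemma~\ref{le:buchma_valentine}. The preceding lemma supplies, for every choice of $2n-1$ boundary points of $\dom I(\Ind_P+t)$, a ray $R$ through the origin from which infinity is visible at each of those points; and this is exactly the hypothesis of Lemma~\ref{le:buchma_valentine}, whose conclusion is that the set in question is a parallelotope.

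More concretely, I would proceed as follows. First, since $I(\Ind_P+t)\in\CVcn$, the set $D:=\dom I(\Ind_P+t)$ is a bounded, convex, full-dimensional subset of $\Rn$, so $\cl D\in\Knn$ and $\bd D = \bd \cl D$. Next, I would pick an arbitrary $(2n-1)$-tuple of boundary points $p_1,\dots,p_{2n-1}\in\bd D$ and apply the previous lemma (whose hypotheses---that $P$ is a parallelotope and $t\in\R$---are exactly those of the present proposition) to produce a ray $R$ emanating from the origin such that, for each $i\in\{1,\dots,2n-1\}$, one sees infinity from $p_i$ via $R$, that is, $(p_i+R)\cap\interior D=\emptyset$. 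Since the choice of $(2n-1)$ boundary points was arbitrary, Lemma~\ref{le:buchma_valentine} then forces $\cl D$ to be a parallelotope, and hence so is $D$.

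I do not expect any real obstacle: all of the nontrivial work---using the isometry properties of $I$ (measure preservation via Proposition~\ref{prop:iso_measure_pres}, preservation of meets and strict inclusions via Lemma~\ref{le:isometry_preserves_order_cup_cap}), convexity, and the escape-to-infinity argument via coercivity---has already been carried out in the previous lemma. The only bookkeeping needed here is to confirm that $\dom I(\Ind_P+t)$ has enough regularity to invoke Buchman-Valentine, and this is immediate from the definition of $\CVcn$.
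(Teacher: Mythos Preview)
Your proposal is correct and matches the paper's own argument essentially verbatim: the paper states that Proposition~\ref{prop:isometries_parallelotopes} follows immediately from the preceding lemma together with the Buchman--Valentine characterization (Lemma~\ref{le:buchma_valentine}), which is exactly what you do. Your additional remark about passing to the closure of $\dom I(\Ind_P+t)$ before invoking Lemma~\ref{le:buchma_valentine} is a minor bookkeeping point that the paper leaves implicit.
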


The next Lemma is a consequence of Proposition~\ref{prop:isometries_parallelotopes} and Lemma~\ref{le:isometry_preserves_order_cup_cap}.

\begin{lemma}
\label{le:parallelotopes_neighbors}
Let $\zeta\in\MomO$ and let $I:(\CVcn,\delta_\zeta)\to(\CVcn,\delta_\zeta)$ be an isometry. If two parallelotopes $P,Q\in\Kn$ are neighbors, then also the parallelotopes $\dom I(\Ind_P+t)$ and $\dom I(\Ind_Q+t)$ are neighbors for every $t\in\R$.
\end{lemma}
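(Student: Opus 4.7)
The approach is to verify the three conditions characterizing neighboring parallelotopes: equal $n$-volume, convex union, and common $(n-1)$-facet. Throughout, set $P':=\dom I(\Ind_P+t)$ and $Q':=\dom I(\Ind_Q+t)$; by Proposition~\ref{prop:isometries_parallelotopes}, both are parallelotopes, so only these three geometric conditions remain.

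For the convex union, I would use that $(\Ind_P+t)\wedge(\Ind_Q+t)=\Ind_{P\cup Q}+t$ lies in $\CVcn$ because $P\cup Q$ is convex; Lemma~\ref{le:isometry_preserves_order_cup_cap}~\ref{it:pres_wedge} then gives $I(\Ind_{P\cup Q}+t)=I(\Ind_P+t)\wedge I(\Ind_Q+t)$, a convex function whose domain $P'\cup Q'$ must therefore be convex. Next, $\epi(\Ind_P+t)\cap\epi(\Ind_Q+t)=(P\cap Q)\times[t,\infty)$ has $\Psi_\zeta$-mass $V_n(P\cap Q)\zeta(t)=0$, and Lemma~\ref{le:isometry_preserves_order_cup_cap}~\ref{it:measure_cap} transports this to $\Psi_\zeta(\epi I(\Ind_P+t)\cap\epi I(\Ind_Q+t))=0$. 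Since $\zeta$ is strictly positive, this forces $V_n(P'\cap Q')=0$. Combining the two facts via a separation argument yields a hyperplane $H$ with $P'\cap Q'=P'\cap H=Q'\cap H$; a short geometric check shows that for two full-dimensional parallelotopes with convex union, this common face must be an $(n-1)$-facet of each, since otherwise the union would fail to be convex near a lower-dimensional common face.

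The main obstacle is to show $V_n(P')=V_n(Q')$. The plan is to use that $P\cup Q$ is itself a parallelotope (obtained from $P$ by doubling one edge), so Proposition~\ref{prop:isometries_parallelotopes} applied to $\Ind_{P\cup Q}+t$ yields that $P'\cup Q'$ is a parallelotope, subdivided by $H$ into $P'$ and $Q'$. The hyperplane $H$ must therefore be parallel to a pair of facets of $P'\cup Q'$, and $V_n(P'):V_n(Q')$ equals the ratio of the corresponding edge lengths. To pin this ratio at $1:1$, I would iterate over a longer chain of neighbors $P, P+b, \ldots, P+Nb$, whose union is again a parallelotope; Proposition~\ref{prop:iso_measure_pres} gives equal $\Psi_\zeta$-mass for each of the $N+1$ slabs inside the image parallelotope, and the combinatorial rigidity of parallelotope subdivisions together with Lemma~\ref{le:subsets_same_measure_equality} should then force all the slabs to be congruent. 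The crux is converting equality of $\Psi_\zeta$-mass into equality of widths; because $\Psi_\zeta$-mass depends on function values and not just the support, I expect this step to require first showing that $I(\Ind_P+t)$ is constant on its domain $P'$, after which $\Psi_\zeta$-mass becomes proportional to Lebesgue volume and the conclusion is immediate.
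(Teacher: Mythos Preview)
Your arguments for the convex union and for the common-facet property are correct and match what the paper does: the paper records this lemma as a direct consequence of Proposition~\ref{prop:isometries_parallelotopes} and Lemma~\ref{le:isometry_preserves_order_cup_cap}, and the relevant steps---$P'\cup Q'$ convex via \ref{it:pres_wedge}, $V_n(P'\cap Q')=0$ via \ref{it:measure_cap}, hence $P'\cap Q'$ is a common facet of the two parallelotopes---already appear verbatim in the proof of Lemma~\ref{le:parallelotope_2n_facets}.

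The genuine problem is your route to $V_n(P')=V_n(Q')$. You propose to reduce equal $\Psi_\zeta$-mass to equal Lebesgue volume by first showing that $I(\Ind_P+t)$ is constant on its domain. But in the paper that constancy is Lemma~\ref{le:iso_f}, whose proof uses Proposition~\ref{prop:affinity}, which in turn rests on Lemma~\ref{le:p_q} and Lemma~\ref{le:parallelotopes_piles}, both of which invoke Lemma~\ref{le:parallelotopes_neighbors}. So your proposed step is circular: you cannot appeal to constancy of $I(\Ind_P+t)$ at this point in the development. Note also that equal $\Psi_\zeta$-mass alone, without constancy, does \emph{not} force equal width of the slabs (the function values on the slabs are not yet controlled), so the ``combinatorial rigidity'' you allude to does not close the gap either.

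The paper is terse here and does not isolate a separate argument for the equal-volume clause; it simply cites the two results above. One consistent reading is that the facet/convex-union structure is what is actually used downstream, with the translate structure (and hence equal volumes) being recovered only through the global grid argument of Lemmas~\ref{le:parallelotopes_piles}--\ref{le:p_q}. In any case, if you want a self-contained proof of the equal-volume part at this stage, you need a mechanism different from the one you sketch.
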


\subsection{Affinities}
The purpose of this section is to show that the map $K\mapsto \dom I(\Ind_K+t)$, $K\in\Knn$, is an affinity for every isometry $I$ on $(\CVcn,\delta_\zeta)$ and $t\in\R$. For this part we follow Gruber \cite{gruber_mathematika_1978} even more closely than in the previous subsections. Here, only very minor modifications of Gruber's proof are needed which is in contrast to the previous sections where also new arguments were added.

\medskip

In the following we say that three parallelotopes $P_1,P_2,P_3\in\Kn$ form a \emph{pile} if $P_2=P_1+b$ and $P_3=P_1+2b$, where $b\in\Rn$ is a vector that is parallel to a one-dimensional edge of $P_1$ and of the same length. It is easy to see that three parallelotopes $P_1,P_2,P_3\in\Kn$ form a pile if and only if $P_1,P_2$ and $P_2,P_3$ are neighbors, $V_n(P_1\cap P_3)=\emptyset$ and $P_1\cup P_2\cup P_3$ is convex.

\begin{lemma}
\label{le:parallelotopes_piles}
Let $\zeta\in \MomO$ and let $I:(\CVcn,\delta_\zeta)\to(\CVcn,\delta_\zeta)$ be an isometry. If three parallelotopes $P_1,P_2,P_3\in\Kn$ form a pile, then also the parallelotopes\linebreak$\dom I(\Ind_{P_1}+t), \dom I(\Ind_{P_2}+t),\dom I(\Ind_{P_3}+t)$ form a pile for every $t\in\R$.
\end{lemma}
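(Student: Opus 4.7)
The plan is to verify the three equivalent conditions for a pile, stated in the paragraph preceding the lemma, for the image parallelotopes $\widetilde{P}_i := \dom I(\Ind_{P_i}+t)$ with $i\in\{1,2,3\}$. By Lemma~\ref{le:parallelotopes_neighbors}, applied to the neighbor pairs $P_1,P_2$ and $P_2,P_3$, the pairs $\widetilde{P}_1,\widetilde{P}_2$ and $\widetilde{P}_2,\widetilde{P}_3$ are neighbors, which handles the first condition.

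For the second condition, note that $\epi(\Ind_{P_1}+t)\cap \epi(\Ind_{P_3}+t)=(P_1\cap P_3)\times [t,\infty)$, so $\Psi_\zeta(\epi(\Ind_{P_1}+t)\cap \epi(\Ind_{P_3}+t))=V_n(P_1\cap P_3)\,\zeta(t)=0$. Lemma~\ref{le:isometry_preserves_order_cup_cap}~\ref{it:measure_cap} then gives $\Psi_\zeta(\epi I(\Ind_{P_1}+t)\cap \epi I(\Ind_{P_3}+t))=0$, and since $\zeta>0$ throughout $\R$ (because $\zeta\in\MomO$), this forces $V_n(\widetilde{P}_1\cap \widetilde{P}_3)=0$. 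For the third condition, the convexity of $P_1\cup P_2$ implies $(\Ind_{P_1}+t)\wedge (\Ind_{P_2}+t)\in\CVcn$, and Lemma~\ref{le:isometry_preserves_order_cup_cap}~\ref{it:pres_wedge} yields $I((\Ind_{P_1}+t)\wedge (\Ind_{P_2}+t))=I(\Ind_{P_1}+t)\wedge I(\Ind_{P_2}+t)$, whose domain $\widetilde{P}_1\cup \widetilde{P}_2$ is therefore convex. Iterating, the convexity of $P_1\cup P_2\cup P_3$ gives $(\Ind_{P_1}+t)\wedge (\Ind_{P_2}+t)\wedge (\Ind_{P_3}+t)\in \CVcn$, and a second application of Lemma~\ref{le:isometry_preserves_order_cup_cap}~\ref{it:pres_wedge} shows that $I(\Ind_{P_1}+t)\wedge I(\Ind_{P_2}+t)\wedge I(\Ind_{P_3}+t)$ is convex, i.e., its domain $\widetilde{P}_1\cup \widetilde{P}_2\cup \widetilde{P}_3$ is convex.

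I do not expect any real obstacle here: the lemma should reduce to an essentially formal verification, since all the required structural properties of $I$ have already been extracted in Lemma~\ref{le:isometry_preserves_order_cup_cap} (preservation of pairwise intersection measures and commutation with $\wedge$) and Lemma~\ref{le:parallelotopes_neighbors} (preservation of the neighbor relation among parallelotopes).
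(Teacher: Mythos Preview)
Your proof is correct and follows essentially the same approach as the paper: both verify the three characterizing conditions for a pile by invoking Lemma~\ref{le:parallelotopes_neighbors} for the neighbor relations, Lemma~\ref{le:isometry_preserves_order_cup_cap}~\ref{it:measure_cap} for $V_n(\widetilde{P}_1\cap\widetilde{P}_3)=0$, and Lemma~\ref{le:isometry_preserves_order_cup_cap}~\ref{it:pres_wedge} for the convexity of the triple union. Your explicit two-step iteration of \ref{it:pres_wedge} is a mild elaboration of what the paper compresses into a single sentence.
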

\begin{proof}
Throughout the proof fix an arbitrary pile $P_1,P_2,P_3\in\Kn$ and $t\in\R$. By Lemma~\ref{le:parallelotopes_neighbors} the parallelotopes $\dom I(\Ind_{P_1}+t), \dom I(\Ind_{P_2}+t)$ as well as $\dom I(\Ind_{P_2}+t), \dom I(\Ind_{P_3}+t)$ are neighbors. Since $V_n(P_1\cap P_3)=0$ it follows from Lemma~\ref{le:isometry_preserves_order_cup_cap} \ref{it:measure_cap} that also
$$V_n(\dom I(\Ind_{P_1}+t) \cap \dom I(\Ind_{P_3}+t))=0.$$
Furthermore, by \ref{it:pres_wedge} of same lemma, $I(\Ind_{P_1}+t)\wedge I(\Ind_{P_2}+t) \wedge I(\Ind_{P_3}+t)$ is convex and thus the same is also true for the union of the respective domains. Thus, $\dom I(\Ind_{P_1}+t), \dom I(\Ind_{P_2}+t),\dom I(\Ind_{P_3}+t)$ form a pile.
\end{proof}

\begin{lemma}
\label{le:p_q}
Let $\zeta\in \MomO$, let $I:(\CVcn,\delta_\zeta)\to(\CVcn,\delta_\zeta)$ be an isometry and let $t\in\R$. If the parallelotope $P\in\Kn$ can be written as
$$P=\{\alpha_1 b_1+\cdots + \alpha_n b_n\colon \alpha_i \in [0,1]\}$$
with $b_1,\ldots,b_n\in\Rn$, then the parallelotope $Q=\dom I(\Ind_P+t)$ can be represented as
$$Q=\{q+ \alpha_1 c_1+\cdots + \alpha_n c_n \colon \alpha_i \in [0,1]\}$$
with $q,c_1,\ldots,c_n\in\Rn$ such that
\begin{equation}
\label{eq:q_r_i_c_i}
\dom I(\Ind_{P+z_1b_1+\cdots z_n b_n}+t)=Q+z_1 c_1+\cdots +z_n c_n
\end{equation}
for every $z_1,\ldots,z_n\in\Z$.
\end{lemma}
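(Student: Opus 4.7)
The plan is to show that the map $z \mapsto Q_z := \dom I(\Ind_{P_z}+t)$, with $P_z := P + z_1 b_1 + \cdots + z_n b_n$, is an affine lattice map, i.e., $Q_z = Q + z_1 c_1 + \cdots + z_n c_n$ for suitable edge vectors $c_i$ of $Q$. By Proposition~\ref{prop:isometries_parallelotopes}, $Q := Q_0$ is a parallelotope; I write $Q = \{q + \alpha_1 c_1 + \cdots + \alpha_n c_n : \alpha_i \in [0,1]\}$ with $c_1, \ldots, c_n$ linearly independent. Applying Lemma~\ref{le:parallelotopes_neighbors} to $P$ and $P \pm b_i$ yields that $Q_{\pm e_i}$ is a neighbor of $Q$, hence of the form $Q + c$ with $c \in \{\pm c_1, \ldots, \pm c_n\}$. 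Lemma~\ref{le:parallelotope_2n_facets} guarantees that the $2n$ neighbors $P \pm b_i$ of $P$ correspond to $2n$ distinct neighbors of $Q$, and Lemma~\ref{le:parallelotopes_piles} applied to the pile $P - b_i, P, P + b_i$ places $Q_{e_i}$ and $Q_{-e_i}$ on opposite sides of $Q$. After relabeling and reorienting the $c_k$, I may assume $Q_{\pm e_i} = Q \pm c_i$ for each $i$.

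Define $g_i(z) := Q_{z+e_i} - Q_z \in \{\pm c_1, \ldots, \pm c_n\}$; the main claim is that $g_i(z) = c_i$ for all $z$ and $i$. The pile $P_z, P_{z+e_i}, P_{z+2e_i}$ maps via Lemma~\ref{le:parallelotopes_piles} to a pile $Q_z, Q_{z+e_i}, Q_{z+2e_i}$, so $g_i(z+e_i) = g_i(z)$. For $j \neq i$, the $2 \times 2$ grid $\{P_{z+\varepsilon_1 e_i + \varepsilon_2 e_j}\}_{\varepsilon_1, \varepsilon_2 \in \{0,1\}}$ has image corner $Q_{z+e_i+e_j}$ which is simultaneously a neighbor of $Q_{z+e_i} = Q_z + g_i(z)$ and of $Q_{z+e_j} = Q_z + g_j(z)$, so there exist $c', c'' \in \{\pm c_k\}$ with
\[
Q_{z+e_i+e_j} = Q_z + g_i(z) + c' = Q_z + g_j(z) + c''.
\]
By linear independence of $c_1, \ldots, c_n$, the admissible solutions of $c' - c'' = g_j(z) - g_i(z)$ yield either the \emph{good} case $Q_{z+e_i+e_j} = Q_z + g_i(z) + g_j(z)$ or a \emph{degenerate} case forcing $Q_{z+e_i+e_j} = Q_z$.

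The main obstacle is ruling out this degenerate case. If $Q_{z+e_i+e_j} = Q_z$, then $u_1 := I(\Ind_{P_z}+t)$ and $u_2 := I(\Ind_{P_{z+e_i+e_j}}+t)$ both lie in $\CVcn$ with the common full-dimensional domain $Q_z$, so both are finite on $Q_z$ and, using $\zeta > 0$,
\[
\Psi_\zeta(\epi u_1 \cap \epi u_2) = \int_{Q_z} \zeta\bigl(\max(u_1(x), u_2(x))\bigr) \d x > 0.
\]
On the other hand, by Lemma~\ref{le:isometry_preserves_order_cup_cap}\ref{it:measure_cap} combined with $V_n(P_z \cap P_{z+e_i+e_j}) = 0$ (the two parallelotopes meet only in a face of dimension $\leq n-2$, by linear independence of $b_i, b_j$),
\[
\Psi_\zeta(\epi u_1 \cap \epi u_2) = \Psi_\zeta\bigl(\epi(\Ind_{P_z}+t) \cap \epi(\Ind_{P_{z+e_i+e_j}}+t)\bigr) = 0,
\]
a contradiction. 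Hence the good case holds, and then $g_i(z+e_j) = Q_{z+e_i+e_j} - Q_{z+e_j} = g_i(z)$.

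Since $g_i$ is constant in every coordinate direction on $\Z^n$, it is globally constant: $g_i \equiv g_i(0) = c_i$. Walking from $0$ to any $z \in \Z^n$ in unit coordinate steps of $\pm e_k$ then accumulates an offset of $z_1 c_1 + \cdots + z_n c_n$, which yields \eqref{eq:q_r_i_c_i}.
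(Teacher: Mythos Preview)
Your approach is essentially correct and closely parallels the paper's: both proofs rest on the neighbor and pile lemmas together with the measure identity of Lemma~\ref{le:isometry_preserves_order_cup_cap}\ref{it:measure_cap}. The paper runs a direct induction on $|z_1|+\cdots+|z_n|$ with a case split on the number of nonzero coordinates, whereas you repackage the same ingredients by showing that the step map $g_i$ is constant on $\Z^n$. Your formulation is a little more economical in that it avoids the separate $k\geq 3$ case.

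There is one genuine gap in your $2\times 2$ grid analysis. Your dichotomy ``good versus degenerate'' for the equation $c'-c''=g_j(z)-g_i(z)$ is valid only if $g_i(z)$ and $g_j(z)$ lie along \emph{different} edge directions of $Q$, i.e., $g_i(z)\neq \pm g_j(z)$. If $g_i(z)=g_j(z)$ the right-hand side vanishes, $c'=c''$ is arbitrary, and many outcomes besides your two arise. You established this distinctness only at $z=0$, where you invoked Lemma~\ref{le:parallelotope_2n_facets}. The fix is to apply the same lemma at $P_z$: the four neighbors $P_{z\pm e_i},P_{z\pm e_j}$ of $P_z$ yield four \emph{distinct} neighbors $Q_z\pm g_i(z),\,Q_z\pm g_j(z)$ of $Q_z$ (using the pile $P_{z-e_i},P_z,P_{z+e_i}$ to identify $Q_{z-e_i}=Q_z-g_i(z)$). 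Hence $g_i(z)\neq\pm g_j(z)$, and your dichotomy then holds exactly as stated. A related implicit step---that every $Q_z$ is a translate of $Q_0$, so that $g_i(z)\in\{\pm c_1,\ldots,\pm c_n\}$ makes sense---follows at once from the fact that neighbors are translates and $\Z^n$ is connected by unit steps; it would be worth saying this in one line.
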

\begin{proof}
For each $i\in \{1,\ldots,n\}$ the three parallelotopes $P-b_i, P, P+b_i$ form a pile. Thus, by Lemma~\ref{le:parallelotopes_piles} their images under the map $P\mapsto \dom I(\Ind_P+t)$ form a pile too and can be written in the form $Q-c_i, Q, Q+c_i$. Here, $c_i$ is a vector that is parallel to a one-dimensional edge of $Q$ and is of the same length. By Lemma~\ref{le:isometry_preserves_order_cup_cap} \ref{it:measure_cap} the $n$ different pairs of neighbors $P\pm b_i$, $i\in\{1,\ldots,n\}$, of $P$ are mapped to $n$ different pairs of neighbors $Q\pm c_i$ of $Q$. Since $Q$ is a parallelotope, there are exactly $n$ different such pairs of neighbors which consist of all $2n$ neighbors of $Q$. Thus, all neighbors of $Q$ are given by the $2n$ parallelotopes $Q\pm c_i$, $i\in\{1,\ldots,n\}$. As a consequence, the vectors $c_1,\ldots,c_n$ are linearly independent. Since also each of them corresponds to a one-dimensional edge of $Q$, this shows that there exists $q\in\Rn$ such that
$$Q=\{q+ \alpha_1 c_1+\cdots + \alpha_n c_n \colon \alpha_i \in [0,1]\}.$$
Furthermore, by the previous arguments,
\begin{equation}
\label{eq:q_pm_c_i}
\dom I(\Ind_P+t)=Q\qquad \text{and} \qquad \dom I(\Ind_{P\pm b_i} +t)= Q \pm c_i
\end{equation}
for every $i\in\{1,\ldots,n\}$. It remains to prove \eqref{eq:q_r_i_c_i}, which we will do by induction.

\medskip

By \eqref{eq:q_pm_c_i} the statement is true whenever $z_1,\ldots,z_n\in \Z$ are such that $|z_1|+\cdots+|z_n|\in\{0,1\}$. Assume now that for given $m\in\N$, $m\geq 2$, the statement holds for all $z_1,\ldots,z_n\in\Z$ such that $|z_1|+\cdots+|z_n|\leq m-1$. We need to show that \eqref{eq:q_r_i_c_i} also holds for all $z_1,\ldots,z_n\in\Z$ with $|z_1|+\cdots+|z_n|=m$.

Without loss of generality, we will assume that $z_1,\ldots,z_k>0=z_{k+1}=\cdots=z_n$ for some $k\in\N$ and remark that the general case can be easily reduced to this case. We now distinguish three cases:
\begin{itemize}
    \item $k=1$:
        In this case $z_1=m\geq 2$ and we need to show that
        \begin{equation}
        \label{eq:case_k=1}
        \dom I(\Ind_{P+z_1 b_1}+t)=Q+z_1 c_1.
        \end{equation}
        By the induction hypothesis
        $$\dom I(\Ind_{P+(z_1-2)b_1}+t) = Q+(z_1-2)c_1 \quad \text{and} \quad \dom I(\Ind_{P+(z_1-1)b_1}+t) = Q+(z_1-1)c_1.$$
        Since the parallelotopes $P+(z_1-2)b_1,P+(z_1-1)b_1,P+z_1b_1$ form a pile, it follows from Lemma~\ref{le:parallelotopes_piles} that also their images under the map $P\mapsto \dom(\Ind_P+t)$ form a pile. This can only be the case if \eqref{eq:case_k=1} holds.
    \item $k=2$:
        By our assumptions, $z_1\geq 1$ and $z_2\geq 1$ and the parallelotopes $P+(z_1-1)b_1+z_2b_2$ and $P+z_1 b_1 + (z_2-1)b_2$ are neighbors of $P+z_1 b_1+z_2 b_2$. By Lemma~\ref{le:parallelotopes_neighbors} also their respective images under the map $P\mapsto \dom I(\Ind_P+t)$ are neighbors. Thus, by the induction hypothesis, the parallelotopes
        $$\dom I(\Ind_{P+(z_1-1)b_1+z_2b_2}+t)=Q+(z_1-1)c_1+z_2 c_2$$
        and
        $$\dom I(\Ind_{P+z_1b_1+(z_2-1)b_2}+t) = Q+z_1 c_1+(z_2-1)c_2$$
        are neighbors of the parallelotope $\dom I(\Ind_{P+z_1 b_1+z_2 b_2}+t)$. The former two parallelotopes have only two common neighbors, namely,
        $$Q+(z_1-1)c_1+(z_2-1)c_2\quad \text{and}\quad Q+z_1 c_1+z_2 c_2.$$
        Since by the induction hypothesis
        $$\dom I(\Ind_{P+(z_1-1)b_1+(z_2-1)b_2}+t)= Q+(z_1-1)c_1+(z_2-1)c_2$$
        and since by Lemma~\ref{le:isometry_preserves_order_cup_cap} \ref{it:measure_cap}
        $$V_n(\dom I(\Ind_{P+(z_1-1)b_1+(z_2-1)b_2}+t) \cap \dom I(\Ind_{P+z_1 b_1+z_2 b_2}+t))=0,$$
        it must follow that
        $$\dom I(\Ind_{P+z_1 b_1 + z_2 b_2}+t)=Q+z_1 c_1+z_2 c_2.$$
    \item $k\geq 3$:
        We argue similar as in the case $k=2$. The parallelotopes
        $$P+(z_1-1)b_1+z_2 b_2+\cdots + z_k b_k, \cdots, P+z_1 b_1+ \cdots + z_{k-1}b_{k-1}+(z_k-1)b_k$$
        are neighbors of the parallelotope $P+z_1 b_1+\cdots+z_k b_k$. If follows from Lemma~\ref{le:parallelotopes_neighbors} and the induction hypothesis that the parallelotopes
        \begin{align*}
        \dom I(\Ind_{P+(z_1-1)b_1+z_2 b_2+\cdots + z_k b_k}+t)&=Q+(z_1-1)c_1+z_2 c_2+\cdots + z_k b_k\\
        &\vdots\\
        \dom I(\Ind_{P+z_1 b_1+ \cdots + z_{k-1}b_{k-1}+(z_k-1)b_k}+t) &= Q+z_1 c_1+\cdots + z_{k-1} b_{k-1}+(z_k-1)c_k
        \end{align*}
        are neighbors of the parallelotope $\dom I(\Ind_{P+z_1 b_1+\cdots +z_k b_k}+t)$. Since the only common neighbor of the former is $Q+z_1 c_1+\cdots + z_k c_k$, it follows that
        $$\dom I(\Ind_{P+z_1 b_1+\cdots z_k b_k}+t)= Q+z_1 c_1+\cdots + z_k c_k,$$
        which completes the proof.\qedhere
\end{itemize}
\end{proof}

The following result generalizes the last lemma.

\begin{lemma}
\label{le:p_q_refined}
Let $\zeta\in \MomO$, let $I:(\CVcn,\delta_\zeta)\to(\CVcn,\delta_\zeta)$ be an isometry and let $t\in\R$. Furthermore let the parallelotopes $P,Q\in\Kn$ and $b_1,\ldots,b_n,q,c_1,\ldots,c_n\in\Rn$ be as in Lemma~\ref{le:p_q}. If for $m\in\N$
$$P^{(m)}:=\frac{1}{2^m}P=\left\{\alpha_1 b_1 + \cdots + \alpha_n b_n \colon \alpha_i \in \left[0,\frac{1}{2^m}\right]\right\},$$
and $Q^{(m)}:=\dom I(\Ind_{P^{(m)}}+t)$, then
$$Q^{(m)} = \left\{q+\alpha_1 c_1 + \cdots + \alpha_n c_n\colon \alpha_i \in \left[0,\frac{1}{2^m}\right]\right\}.$$
Moreover,
$$\dom I(\Ind_{P^{(m)}+\frac{1}{2^m}(z_1 b_1+\cdots+z_n b_n)}+t)=Q^{(m)}+\frac{1}{2^m} \left(z_1 c_1 + \cdots + z_n c_n \right)$$
for every $m\in\N$ and $z_1,\ldots,z_n\in\Z$.
\end{lemma}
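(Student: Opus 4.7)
The plan is to argue by induction on $m$. The base case $m = 0$ is precisely Lemma~\ref{le:p_q}. For the inductive step, assume the statement holds for some $m \geq 0$ and apply Lemma~\ref{le:p_q} to the parallelotope $P^{(m+1)}$, whose edges are $b_1/2^{m+1}, \ldots, b_n/2^{m+1}$. This yields vectors $q', c_1', \ldots, c_n' \in \Rn$ such that
$$Q^{(m+1)} = q' + \Big\{\sum_{i=1}^n \alpha_i c_i' : \alpha_i \in [0,1]\Big\}$$
and
$$\dom I\Big(\Ind_{P^{(m+1)} + \sum_i z_i b_i/2^{m+1}} + t\Big) = Q^{(m+1)} + \sum_{i=1}^n z_i c_i'$$
for every $z \in \Z^n$. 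The induction then reduces to showing $q' = q$ and $c_i' = c_i/2^{m+1}$ for each $i$.

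To connect the two scales, I would exploit the decomposition of $P^{(m)}$ into its $2^n$ halving cells,
$$P^{(m)} = \bigcup_{\epsilon \in \{0,1\}^n} \Big(P^{(m+1)} + \sum_{i=1}^n \epsilon_i b_i/2^{m+1}\Big),$$
and apply Lemma~\ref{le:isometry_preserves_order_cup_cap}~\ref{it:pres_wedge} iteratively to the corresponding indicator functions. By traversing $\{0,1\}^n$ in an order for which every partial union is convex — for instance by first merging $b_1$-pairs into 1D strips, then combining strips along $b_2$ into 2D slabs, and so on — each intermediate wedge stays in $\CVcn$ and the iteration is legitimate. Taking domains gives
$$Q^{(m)} = \bigcup_{\epsilon \in \{0,1\}^n} \Big(Q^{(m+1)} + \sum_{i=1}^n \epsilon_i c_i'\Big),$$
which is exactly the parallelotope with base $q'$ and edges $2 c_1', \ldots, 2 c_n'$.

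On the other hand, the inductive hypothesis asserts $Q^{(m)} = q + \{\sum_i \alpha_i c_i/2^m : \alpha_i \in [0,1]\}$. To identify the $c_i'$, I would repeat the halving-cell argument with $P^{(m)}$ replaced by $P^{(m)} + b_j/2^m$: by the inductive hypothesis its image domain is $Q^{(m)} + c_j/2^m$, whereas the decomposition into cells of $P^{(m+1)}$ yields the same $\epsilon$-union as above but shifted by $2 c_j'$. Comparing gives $c_j/2^m = 2 c_j'$, hence $c_j' = c_j/2^{m+1}$ for every $j$. Substituting this back, the cell decomposition now presents $Q^{(m)}$ as $q' + \{\sum_i \alpha_i c_i/2^m : \alpha_i \in [0,1]\}$, and comparing with the inductive hypothesis forces $q' = q$. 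The second assertion of the lemma is then the literal conclusion of Lemma~\ref{le:p_q} applied to $P^{(m+1)}$, with the identified values substituted.

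I expect the main technical obstacle to be justifying the iterated application of Lemma~\ref{le:isometry_preserves_order_cup_cap}~\ref{it:pres_wedge}: one must exhibit an ordering of the $2^n$ halving cells every prefix of which has convex union, so that each intermediate infimum lies in $\CVcn$. The hierarchical dimension-by-dimension merging sketched above achieves this, though writing it out rigorously requires a nested induction on dimension.
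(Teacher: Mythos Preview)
Your argument is correct but takes a genuinely different route from the paper. Both proofs set up the induction identically and apply Lemma~\ref{le:p_q} to $P^{(m+1)}$ to obtain $q',c_1',\dots,c_n'$; they diverge in how they identify these with $q$ and $c_i/2^{m+1}$. You tile $P^{(m)}$ by its $2^n$ halving cells and push this through via Lemma~\ref{le:isometry_preserves_order_cup_cap}~\ref{it:pres_wedge}, obtaining $Q^{(m)}$ as the parallelotope $q'+\{\sum_i\alpha_i(2c_i'):\alpha_i\in[0,1]\}$; comparing with the translate $P^{(m)}+b_j/2^m$ then yields $Q^{(m)}+2c_j'=Q^{(m)}+c_j/2^m$, whence $c_j'=c_j/2^{m+1}$ and finally $q'=q$. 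The paper instead uses the order-preservation part~\ref{it:strict_incl}: from $P^{(1)}+zb_i\subset P+zb_i$ one gets $Q^{(1)}+2zc_i'\subseteq Q+zc_i$ for every $z\in\Z$, and letting $|z|\to\infty$ forces $2c_i'=c_i$; the identification $q'=q$ is then read off from the neighbour-preservation Lemma~\ref{le:parallelotopes_neighbors}, since the common facet of $Q^{(1)}$ and $Q^{(1)}-\tfrac12 c_i$ must sit inside the common facet of $Q$ and $Q-c_i$ for each $i$. The paper's route is shorter and sidesteps entirely the nested induction you flag for ordering the $2^n$ cells; your route is more constructive and is, in spirit, a preview of the inductive-partition machinery the paper develops later in Lemma~\ref{le:partition_decomposition}, at the cost of the extra bookkeeping you correctly anticipate.
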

\begin{proof}
It is enough to prove the result for the case $m=1$. The general case follows by applying the Lemma iteratively.

By Lemma~\ref{le:p_q}, applied to $P^{(1)}$ and $Q^{(1)}$, there exist $q', c_1',\ldots,c_n'\in\Rn$ such that
$$Q^{(1)}=\{q'+\alpha_1 c_1' + \cdots + \alpha_n c_n'\colon \alpha_i\in[0,1]\}$$
and
\begin{equation}
\label{eq:dom_i_p_1_eq_q_1}
\dom I(\Ind_{P^{(1)}+\frac 12 (z_1 b_1+\cdots + z_n b_n)}+t) = Q^{(1)} + z_1 c_1' + \cdots + z_n c_n'
\end{equation}
for every $z_1,\ldots,z_n\in\Z$.

Observe that $P^{(1)} + \frac 12 (2 z b_i)\subset P + z b_i$ for every $i\in\{1,\ldots,n\}$ and $z\in\Z$. Thus, by \eqref{eq:dom_i_p_1_eq_q_1}, Lemma~\ref{le:isometry_preserves_order_cup_cap} \ref{it:strict_incl} and Lemma~\ref{le:p_q},
\begin{equation}
\label{eq:q_1_subset_q}
Q^{(1)}+2 z c_i' = \dom I(\Ind_{P^{(1)} + \frac 12 (2 z b_i)}+t) \subseteq \dom I(\Ind_{P + z b_i}+t) = Q + z c_i
\end{equation}
for every $i\in\{1,\ldots,n\}$ and $z\in\Z$, which can only be true if $2 c_i' = c_i$. Hence,
$$Q^{(1)} =\left\{q'+\alpha_1 c_1 + \cdots + \alpha_n c_n \colon \alpha_i \in \left[0,\frac 12\right] \right\}$$
and
$$\dom I(\Ind_{P^{(1)}+\frac 12 (z_1 b_1+\cdots + z_n b_n)}+t) = Q^{(1)} + \frac 12(z_1 c_1 + \cdots + z_n c_n)$$
for every $z_1,\ldots,z_n\in\Z$.

It remains to show $q=q'$. Observe, that $P^{(1)}-\frac 12 b_i \subset P-b_i$, $i\in\{1,\ldots,n\}$, are neighbors of $P^{(1)} \subset P$. Thus, by Lemma~\ref{le:parallelotopes_neighbors} and \eqref{eq:q_1_subset_q}, also $Q^{(1)} - \frac 12 c_i \subset Q - c_i$ are neighbors of $Q^{(1)} \subset Q$, which can only be the case if $q=q'$.
\end{proof}

\begin{proposition}
\label{prop:affinity}
Let $\zeta\in \MomO$ and let $I:(\CVcn,\delta_\zeta)\to(\CVcn,\delta_\zeta)$ be an isometry. There exist $\phi\in\GLn$ and $x_0\in\Rn$ such that
$$\dom I(\Ind_K+t) = \phi K +x_0$$
for every $K\in\Knn$ and $t\in\R$.
\end{proposition}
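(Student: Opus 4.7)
The plan is to upgrade Lemma~\ref{le:p_q_refined} from dyadic translates of a reference cube to all convex bodies via inner and outer dyadic approximation, and then to eliminate the a~priori dependence on the height. Fix $t_0\in\R$ and take $P_0:=[0,1]^n$. Lemma~\ref{le:p_q_refined} with $P=P_0$ yields linearly independent $c_1,\dots,c_n\in\Rn$ and $q\in\Rn$; setting $\phi\in\GLn$ by $\phi e_i:=c_i$ and $x_0:=q$, every dyadic parallelotope $D(m,z):=\tfrac{1}{2^m}P_0+\tfrac{1}{2^m}\sum_{i=1}^n z_i e_i$, $m\in\N$, $z\in\Z^n$, satisfies $\dom I(\Ind_{D(m,z)}+t_0)=\phi D(m,z)+x_0$.

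For the extension to arbitrary $K\in\Knn$ at height $t_0$, take any dyadic $D\subset\interior K$: the containment $\epi(\Ind_D+t_0)\subseteq\epi(\Ind_K+t_0)$ together with Lemma~\ref{le:isometry_preserves_order_cup_cap}\,\ref{it:strict_incl} gives $\phi D+x_0=\dom I(\Ind_D+t_0)\subseteq\dom I(\Ind_K+t_0)$, and summing over such $D$ produces $\phi K+x_0\subseteq\dom I(\Ind_K+t_0)$. In the opposite direction, for any dyadic $D$ with $\interior D\cap\interior K=\emptyset$ one has $\Psi_\zeta(\epi(\Ind_K+t_0)\cap\epi(\Ind_D+t_0))=V_n(K\cap D)\,\zeta(t_0)=0$; Lemma~\ref{le:isometry_preserves_order_cup_cap}\,\ref{it:measure_cap} combined with the strict positivity of $\zeta$ then yields $V_n(\dom I(\Ind_K+t_0)\cap(\phi D+x_0))=0$. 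Summing over the countable family of such $D$, whose union covers $\Rn\setminus\interior K$ up to a set of Lebesgue measure zero, forces $V_n(\dom I(\Ind_K+t_0)\setminus(\phi K+x_0))=0$, hence $\dom I(\Ind_K+t_0)\subseteq\phi K+x_0$ since $\dom I(\Ind_K+t_0)$ is convex with non-empty interior.

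For an arbitrary $t\in\R$, Lemma~\ref{le:p_q_refined} produces some $(\phi_t,x_0(t))$ with $\dom I(\Ind_{D(m,z)}+t)=\phi_t D(m,z)+x_0(t)$. Running the previous upper-bound argument at mixed heights---legitimate because $\Psi_\zeta(\epi(\Ind_D+t_0)\cap\epi(\Ind_{D(m,z)}+t))=V_n(D\cap D(m,z))\,\zeta(\max\{t_0,t\})$ still vanishes when the two cubes are interior-disjoint---gives
\[
\phi_t D(m,z)+x_0(t)\subseteq\phi D(m,z)+x_0\quad\text{for every } m\in\N,\ z\in\Z^n.
\]
Specializing to $D(0,z)=P_0+z$ and letting the coordinates of $z$ tend to $\pm\infty$ forces $\phi_t=\phi$, since otherwise the left-hand side is an unbounded family of translates of the bounded parallelotope $\phi_t P_0$ sitting inside the bounded parallelotope $\phi P_0$; the residual constraint $\phi P_0+(x_0(t)-x_0)\subseteq\phi P_0$ then forces $x_0(t)=x_0$, as no bounded parallelotope contains a nonzero translate of itself. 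Finally, applying the lower-bound argument of Step~2 at the single height $t$, with dyadic $D\subset\interior K$ for which $\dom I(\Ind_D+t)=\phi D+x_0$ by what has just been proved, yields $\phi K+x_0\subseteq\dom I(\Ind_K+t)$ and completes the proof. The main obstacle is this last step: the $t$-independence of $(\phi_t,x_0(t))$ is not detectable from any single parallelotope and must be extracted from the rigidity of the simultaneous containment across the unbounded lattice $\Z^n$.
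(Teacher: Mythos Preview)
Your argument is correct and follows the same overall route as the paper: apply Lemma~\ref{le:p_q_refined} to obtain $(\phi_t,x_0(t))$ at each height, establish $t$-independence, and pass from dyadic cubes to general $K\in\Knn$ by inner approximation (via Lemma~\ref{le:isometry_preserves_order_cup_cap}\,\ref{it:strict_incl}) and outer approximation (via Lemma~\ref{le:isometry_preserves_order_cup_cap}\,\ref{it:measure_cap}).

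The one place where you genuinely diverge is the $t$-independence step. The paper observes directly that $\epi(\Ind_{D(m,z)}+t_2)\subsetneq\epi(\Ind_{D(m,z)}+t_1)$ for $t_1<t_2$, which by Lemma~\ref{le:isometry_preserves_order_cup_cap}\,\ref{it:strict_incl} immediately yields $\phi_{t_2}D(m,z)+x_0(t_2)\subseteq\phi_{t_1}D(m,z)+x_0(t_1)$; sending $m\to\infty$ then pins down the base point, and sending $|z_i|\to\infty$ pins down the linear part. Your mixed-height disjointness argument via \ref{it:measure_cap} reaches the same containment and is valid, but it re-runs the outer-approximation machinery where the paper needs only a one-line epigraph inclusion. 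Your extraction of $\phi_t=\phi$ and $x_0(t)=x_0$ from the containment is in fact slightly cleaner than the paper's, since you only use $m=0$.

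One small omission: your final sentence invokes only the lower-bound step at height $t$, giving $\phi K+x_0\subseteq\dom I(\Ind_K+t)$, but the claimed equality also requires the upper bound $\dom I(\Ind_K+t)\subseteq\phi K+x_0$ for general $K$. This follows by running your Step~2 upper-bound argument verbatim at the single height $t$, now that $\dom I(\Ind_D+t)=\phi D+x_0$ is known for all dyadic $D$.
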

\begin{proof}
Throughout the proof we fix an arbitrary parallelotope
$$P=\{\alpha_1 b_1 + \cdots + \alpha_n b_n \colon \alpha_i \in [0,1]\}$$
with $b_1,\ldots,b_n\in\Rn$. By Lemma~\ref{le:p_q} there exist $q_t,c_{1,t},\ldots,c_{n,t}\in\Rn$ (using the same numeration as in Lemma~\ref{le:p_q}), depending on $t\in\R$, such that
$$\dom I(\Ind_P+t)=\{q_t+\alpha_1 c_{1,t}+\cdots+\alpha_n c_{n,t}\colon: \alpha_i \in [0,1]\}.$$
Define $\phi_t\in\GLn$ as the unique general linear transformation such that $\phi_t b_i= c_{i,t}$ for $i\in\{1,\ldots,n\}$. By Lemma~\ref{le:p_q_refined} we now have
\begin{align}
\begin{split}
\label{eq:dom_i_ind_p_equals_phi_t_p}
\dom I\left(\Ind_{\frac{1}{2^m} P + \frac{1}{2^m}(z_1 b_1 + \ldots + z_n b_n)} +t\right) &= q_t + \phi_t\left( \frac{1}{2^m} P + \frac{1}{2^m}(z_1 b_1 + \ldots + z_n b_n)\right)\\
&= q_t + \frac{1}{2^m} \left(\phi_t P + z_1 c_{1,t} + \ldots + z_n c_{n,t}\right)
\end{split}
\end{align}
for every $m\in\N$, $z_1,\ldots,z_n\in\Z$ and $t\in\R$.

Fix arbitrary $t_1,t_2\in\R$ such that $t_1<t_2$. Since
$$\epi\left(\Ind_{\frac{1}{2^m} P + \frac{1}{2^m}(z_1 b_1 + \ldots + z_n b_n)}+t_2\right) \subset \epi \left(\Ind_{\frac{1}{2^m} P + \frac{1}{2^m}(z_1 b_1 + \ldots + z_n b_n)} + t_1\right)$$
it follows from Lemma~\ref{le:isometry_preserves_order_cup_cap} \ref{it:strict_incl} together with \eqref{eq:dom_i_ind_p_equals_phi_t_p} that
$$q_{t_2}+\frac{1}{2^m} \big(\phi_{t_2} P + z_1 c_{1,t_2} + \ldots + z_n c_{n,t_2}\big) \subseteq q_{t_1}+\frac{1}{2^m} \big(\phi_{t_1} P + z_1 c_{1,t_1} + \ldots + z_n c_{n,t_1}\big)$$
for every $m\in\N$ and $z_1,\ldots,z_n\in\Z$. Choosing $m$ large enough shows that this can only be the case if $q_{t_1}=q_{t_2}$. Similarly, choosing $z_i$ large enough shows that $c_{i,t_1}=c_{i,t_2}$ for $i\in\{1,\ldots,n\}$. Thus, there exist $q\in\Rn$ and $\phi\in\GLn$ such that
\begin{equation}
\label{eq:dom_i_ind_p_equals_phi_p}
\dom I\left(\Ind_{\frac{1}{2^m} P + \frac{1}{2^m}(z_1 b_1 + \ldots + z_n b_n)} +t\right) = q + \phi\left( \frac{1}{2^m} P + \frac{1}{2^m}(z_1 b_1 + \ldots + z_n b_n)\right)
\end{equation}
for every $m\in\N$ and $z_1,\ldots,z_n\in\Z$ and $t\in\R$. In particular, $q$ and $\phi$ do not depend on $t$.

\medskip

Now let a convex body $K\in\Knn$ be given. For every $m\in\N$ and $z_1,\ldots,z_n\in\Z$ such that
$$\frac{1}{2^m} P + \frac{1}{2^m}(z_1 b_1+ \cdots + z_n b_n) \subset K$$
it follows from Lemma~\ref{le:isometry_preserves_order_cup_cap} \ref{it:strict_incl} and \eqref{eq:dom_i_ind_p_equals_phi_p} that
$$q + \phi\left( \frac{1}{2^m} P + \frac{1}{2^m}(z_1 b_1 + \ldots + z_n b_n)\right) \subseteq \dom I(\Ind_K +t)$$
for every $t\in\R$. Similarly, for every $m\in\N$ and $z_1,\ldots,z_n\in\Z$ such that
$$V_n\left(\left(\frac{1}{2^m} P + \frac{1}{2^m}(z_1 b_1+ \cdots + z_n b_n) \right) \cap K\right) = 0$$
we have
$$\Psi_{\zeta}\left(\epi\left(\Ind_{\frac{1}{2^m} P + \frac{1}{2^m}(z_1 b_1+ \cdots + z_n b_n)}+t\right)\cap \epi(\Ind_K +t)\right)=0$$
and thus, by Lemma~\ref{le:isometry_preserves_order_cup_cap} \ref{it:measure_cap},
$$\Psi_{\zeta}\left(\epi I\left(\Ind_{\frac{1}{2^m} P + \frac{1}{2^m}(z_1 b_1+ \cdots + z_n b_n)}+t\right)\cap \epi I(\Ind_K +t)\right)=0$$
and therefore, by \eqref{eq:dom_i_ind_p_equals_phi_p},
$$V_n\left(\left(q + \phi\left( \frac{1}{2^m} P + \frac{1}{2^m}(z_1 b_1 + \ldots + z_n b_n)\right) \right) \cap \dom I(\Ind_K +t)  \right)=0$$
for every $t\in\R$. The last two statements together imply that $\dom I(\Ind_K+t)=q +\phi K$ for every $t\in\R$, which completes the proof.
\end{proof}

\subsection{Partitions}
For the final steps of our proof we will need partitions of convex bodies. In particular, we will use notions and ideas that were presented in \cite{cavallina_colesanti} in order to classify valuations on $\CVc$.

\medskip

Let $K,K_1,\ldots,K_m\in\Knn$ be convex bodies. The set $\Gamma_K:=\{K_1,\ldots,K_m\}$ is called a \emph{convex partition} of $K$ if
$$K=\bigcup_{i=1}^m K_i\qquad\text{and}\qquad \interior(K_i \cap K_j) = \emptyset$$
for every $i,j\in\{1,\ldots,m\}$ with $i\neq j$. A convex partition is furthermore called an \emph{inductive partition} if there exist $H_1,\ldots,H_l\in\Knn$ such that $H_l=K$ and such that for every $i\in\{1,\ldots,l\}$ either $H_i\in\Gamma_K$ or there exist $j,k<i$ such that $H_i=H_j\cup H_k$ and $\interior (H_j\cap H_k)=\emptyset$.

\medskip

It is obvious from the definition that if $\Gamma_K$ is an inductive partition of $K$, then there exist $K',K''\in\Knn$ such that $K=K'\cup K''$ and $\interior(K'\cap K'')=\emptyset$. Furthermore, $\Gamma_K$ contains inductive partitions for the sets $K'$ and $K''$ and therefore, they can be split themselves in a similar away. This process can then be repeated until finally, after a finite number of steps, the sets $K_1,\ldots,K_m$ are obtained.

Vice versa, an inductive partition $\Gamma_K$ of $K$ allows us to create a sequence of unions of sets, starting with the sets $K_1,\ldots,K_m$, where each union is again a convex body such that after a finite number of steps the set $K$ is obtained.

\medskip

The proof of the following result is obtained from the proof of \cite[Lemma 7.5]{cavallina_colesanti}, using only slight modifications.

\begin{lemma}
\label{le:partition_decomposition}
Let $\zeta\in \MomO$, let $I:(\CVcn,\delta_\zeta)\to(\CVcn,\delta_\zeta)$ be an isometry and let $K\in\Knn$. If $\Gamma_K=\{K_1,\ldots,K_m\}$ is an inductive partition of $K$, then
$$I(u+\Ind_K)=I(u+\Ind_{K_1})\wedge \cdots\wedge I(u+\Ind_{K_m})$$
for every $u\in\CVcn$ and
$$\interior \left(\dom I(u+\Ind_{K_i}) \cap \dom I(u+\Ind_{K_j})\right) =\emptyset$$
for every $i\neq j$.
\end{lemma}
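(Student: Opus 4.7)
The plan is to proceed by induction on the number $m$ of members of the inductive partition $\Gamma_K=\{K_1,\ldots,K_m\}$. The base case $m=1$ is trivial, and the inductive step exploits the recursive structure of inductive partitions together with the fact, established in Lemma~\ref{le:isometry_preserves_order_cup_cap}~\ref{it:pres_wedge}, that $I$ intertwines with the pointwise minimum whenever the latter remains in $\CVcn$.

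For the inductive step, recall that by definition an inductive partition with $m\geq 2$ members arises from a splitting $K=K'\cup K''$ with $K',K''\in\Knn$ and $\interior(K'\cap K'')=\emptyset$, together with inductive partitions $\{K_1,\ldots,K_l\}$ and $\{K_{l+1},\ldots,K_m\}$ of $K'$ and $K''$ respectively (after relabeling), each strictly shorter than $\Gamma_K$. The key algebraic identity is
$$u+\Ind_K=(u+\Ind_{K'})\wedge(u+\Ind_{K''}),$$
which holds because $\Ind_{K'\cup K''}=\Ind_{K'}\wedge\Ind_{K''}$. Since $K=K'\cup K''$ is convex, the function on the left lies in $\CVcn$ (granted the implicit hypothesis that each $u+\Ind_{K_i}$ lies in $\CVcn$, which holds e.g.\ when $K_i$ meets $\interior\dom u$ in an $n$-dimensional set). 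Lemma~\ref{le:isometry_preserves_order_cup_cap}~\ref{it:pres_wedge} therefore yields
$$I(u+\Ind_K)=I(u+\Ind_{K'})\wedge I(u+\Ind_{K''}),$$
and applying the induction hypothesis to $K'$ and $K''$ separately gives the desired identity
$$I(u+\Ind_K)=I(u+\Ind_{K_1})\wedge\cdots\wedge I(u+\Ind_{K_m}).$$

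For the disjointness of interiors, fix $i\neq j$. Since $\interior(K_i\cap K_j)=\emptyset$ and the epigraphs intersect only in a set of $\Psi_\zeta$-measure zero, we have $\Psi_\zeta(\epi(u+\Ind_{K_i})\cap\epi(u+\Ind_{K_j}))=0$. By Lemma~\ref{le:isometry_preserves_order_cup_cap}~\ref{it:measure_cap} the same holds for the images under $I$, which forces
$$V_n\bigl(\dom I(u+\Ind_{K_i})\cap\dom I(u+\Ind_{K_j})\bigr)=0,$$
and since each domain is a full-dimensional convex set, their intersection has empty interior.

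The only real subtlety is bookkeeping around the hypothesis $\CVcn$ at each induction step: one must verify that the intermediate functions $u+\Ind_{K'}$, $u+\Ind_{K''}$ (and inductively every $u+\Ind_{K_i}$) have $n$-dimensional domain, so that $I$ is defined on them and Lemma~\ref{le:isometry_preserves_order_cup_cap}~\ref{it:pres_wedge} legitimately applies. This is the step where the assumption that $\Gamma_K$ is an inductive partition---rather than merely a convex partition---is essential, as it guarantees that every set appearing in the recursive decomposition is itself a convex body in $\Knn$, so that the corresponding functions remain in $\CVcn$ throughout the induction.
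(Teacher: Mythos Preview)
Your proposal is correct and follows essentially the same approach as the paper: both arguments split $K=K'\cup K''$ via the recursive structure of inductive partitions, apply Lemma~\ref{le:isometry_preserves_order_cup_cap}~\ref{it:pres_wedge} to obtain $I(u+\Ind_K)=I(u+\Ind_{K'})\wedge I(u+\Ind_{K''})$, and then invoke the induction hypothesis on the sub-partitions. The only cosmetic differences are that the paper inducts on the length $l$ of the auxiliary sequence $H_1,\ldots,H_l$ rather than on $m$, and handles the disjointness of domains inductively along the tree, whereas you argue it directly for each pair $i\neq j$ via Lemma~\ref{le:isometry_preserves_order_cup_cap}~\ref{it:measure_cap}; your direct argument is in fact slightly cleaner.
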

\begin{proof}
Let $\Gamma_K$ be given an let $H_1,\ldots,H_l$ be as in the definition of an inductive partition. In particular, $H_l=K$. We will prove the statement by induction on $l\in\N$. The case $l=1$ is trivial and there is nothing to show. Therefore, let $l\geq 2$ and assume that the statement is true for $l-1$. Observe that $H_l\not\in\Gamma_K$. Otherwise $K=H_l\in \Gamma_K$ which can only be the case if $\Gamma_K=\{K\}$ and therefore, $l=1$. Hence, there exist $j,k<l$ such that $H_j\cup H_k = K$ and $\interior (H_j\cap H_k)=\emptyset$. Since both $H_j$ and $H_k$ are convex bodies with non-empty interiors it follows that $u+\Ind_{H_j},u+\Ind_{H_k}\in\CVcn$ and furthermore
\begin{equation}
\label{eq:u_ind_h_j_wedge_u_ind_h_k}
(u+\Ind_{H_j})\wedge (u+\Ind_{H_k}) = u+ \Ind_{H_j \cup H_k} = u+\Ind_K
\end{equation}
and
$$(u+\Ind_{H_j})\vee (u+\Ind_{H_k}) = u+\Ind_{H_j\cap H_k}.$$
In particular, the last function has a domain with empty interior and therefore
\begin{equation}
\label{eq:psi_zeta_epi_u_ind_h_j_cap_u_ind_h_k}
\Psi_{\zeta}(\epi(u+\Ind_{H_j}) \cap \epi(u+\Ind_{H_k}))=0.
\end{equation}
Hence, it follows from \eqref{eq:u_ind_h_j_wedge_u_ind_h_k} and Lemma~\ref{le:isometry_preserves_order_cup_cap} \ref{it:pres_wedge} that
$$I(u+\Ind_K)=I(u+\Ind_{H_j})\wedge I(u+\Ind_{H_k})$$
and similarly by \eqref{eq:psi_zeta_epi_u_ind_h_j_cap_u_ind_h_k} and Lemma~\ref{le:isometry_preserves_order_cup_cap} \ref{it:measure_cap} that
$$\Psi_{\zeta}(\epi I(u+\Ind_{H_j}) \cap \epi I(u+\Ind_{H_k}))=0$$
which implies
$$\interior \left(\dom I(u+\Ind_{H_j}) \cap \dom I(u+\Ind_{H_k}) \right)=\emptyset.$$
We now apply the induction assumption to the inductive partitions $\Gamma_{H_j}=\{L\in \Gamma_K\colon L\subseteq H_j\}$ and $\Gamma_{H_k}=\{L\in \Gamma_K\colon L\subseteq H_k\}$ and obtain
$$I(u+\Ind_K)=I(u+\Ind_{H_j})\wedge I(u+\Ind_{H_k}) = \bigwedge_{L\in \Gamma_{H_j}}I(u+\Ind_{L}) \wedge \bigwedge_{L\in\Gamma_{H_k}} I(u+\Ind_{L})$$
such that the intersection of the domains of each two the functions on the right-hand side has empty interior.
\end{proof}

The next result is easy to see (see also \cite[Section 7.2]{cavallina_colesanti}).

\begin{lemma}
\label{le:polytope_partition}
For every $K\in\Knn$ and for every $\varepsilon>0$ there exists an inductive partition $\Gamma_K=\{K_1,\ldots,K_m\}$ of $K$ such that $\diam K_i\leq \varepsilon$ for every $i\in\{1,\ldots,m\}$.
\end{lemma}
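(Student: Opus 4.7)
\medskip

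The plan is to obtain $\Gamma_K$ from a sufficiently fine axis-parallel subdivision of a bounding box of $K$ and then to reassemble $K$ from this subdivision one coordinate direction at a time. Fix an orthonormal basis $e_1,\ldots,e_n$ of $\Rn$, set $a_i:=\min_{x\in K}\langle x,e_i\rangle$, $b_i:=\max_{x\in K}\langle x,e_i\rangle$ (which satisfy $a_i<b_i$ since $K\in\Knn$), and choose $N\in\N$ large enough that the cells
$$B_{\mathbf j}:=\prod_{i=1}^{n}\big[a_i+(j_i-1)h_i,\;a_i+j_ih_i\big],\qquad \mathbf j=(j_1,\ldots,j_n)\in\{1,\ldots,N\}^n,$$
with $h_i:=(b_i-a_i)/N$, each have diameter at most $\varepsilon$. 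Define
$$\Gamma_K:=\big\{K\cap B_{\mathbf j}\colon K\cap B_{\mathbf j}\in\Knn\big\}.$$
Each member is convex with diameter $\leq\varepsilon$, and distinct cells have disjoint interiors, so distinct members do too. To see that $\Gamma_K$ covers $K$, note that any $x\in\interior K$ lies in $B_{\mathbf j}$ for some $\mathbf j$ with $K\cap B_{\mathbf j}$ full-dimensional; since $K=\cl(\interior K)$ and only finitely many cells exist, a pigeonhole argument handles boundary points as well.

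The nontrivial point is the inductive property. Fix $(j_1,\ldots,j_{n-1})$ and consider the strip
$$S_{j_1,\ldots,j_{n-1}}:=\prod_{i=1}^{n-1}\big[a_i+(j_i-1)h_i,\,a_i+j_ih_i\big]\times\R.$$
The set $K\cap S_{j_1,\ldots,j_{n-1}}$ is convex, so its projection onto the $e_n$-axis is an interval; consequently the values of $j_n$ for which $K\cap B_{(j_1,\ldots,j_n)}\in\Knn$ form a contiguous block. Two adjacent cells in the $e_n$-direction have union $B_{\mathbf j}\cup B_{\mathbf{j'}}$ equal to an axis-parallel box, hence convex, so the union of the corresponding two members of $\Gamma_K$ is $K\cap(B_{\mathbf j}\cup B_{\mathbf{j'}})$, again a convex body, and the two pieces share only a common facet. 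Sequentially gluing the cells of each $e_n$-column produces the convex body $K\cap S_{j_1,\ldots,j_{n-1}}$ through a list of pairwise unions whose intermediate steps are all in $\Knn$. Repeating the same argument in the $e_{n-1}$-direction merges adjacent columns into wider strips intersected with $K$ (each still convex, for the same reason that the union of two adjacent axis-parallel boxes remains an axis-parallel box), and after $n$ such passes one recovers $K$ itself. Listing every convex body produced along the way supplies the sequence $H_1,\ldots,H_l$ with $H_l=K$ required by the definition of an inductive partition.

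The only real obstacle is this axis-by-axis bookkeeping, which reduces to the single observation that unions of adjacent axis-parallel boxes remain axis-parallel boxes, so that every partial union encountered during the assembly is automatically convex.
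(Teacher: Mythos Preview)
Your proof is correct. The paper itself does not supply a proof of this lemma; it merely states that the result is easy to see and refers the reader to Section~7.2 of Cavallina--Colesanti. Your axis-parallel grid subdivision followed by axis-by-axis reassembly is the natural construction, and the verification that at each stage the full-dimensional pieces along the current gluing direction form a contiguous block is precisely the convexity observation that makes the inductive structure work.

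Two small remarks on presentation. First, the phrase ``a pigeonhole argument handles boundary points'' is imprecise; the clean statement is that the union of the members of $\Gamma_K$ is a closed set containing $\interior K$, hence containing $\cl(\interior K)=K$. Second, the sentence ``its projection onto the $e_n$-axis is an interval; consequently the values of $j_n$ for which $K\cap B_{(j_1,\ldots,j_n)}\in\Knn$ form a contiguous block'' is slightly elliptical: the interval property of the projection of $K\cap S$ alone does not immediately give this, but the interval property of the projection of $\interior(K\cap S)$ does, since $K\cap B_{\mathbf j}\in\Knn$ exactly when $\interior K\cap\interior B_{\mathbf j}\neq\emptyset$, and the latter condition is clearly preserved along segments in $\interior K$ staying inside $\interior S$. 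Neither point affects the validity of the argument.
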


Note that if $\Gamma_K=\{K_1,\ldots,K_m\}$ is an inductive partition of $K\in\Knn$, then it is easy to see that $\{\alpha(K_1),\ldots,\alpha(K_m)\}$ is an inductive partition of $\alpha(K)$ for every affine transform $\alpha:\Rn\to\Rn$ with non-vanishing determinant.

\begin{lemma}
\label{le:iso_f}
Let $\zeta\in \MomO$ and let $I:(\CVcn,\delta_\zeta)\to(\CVcn,\delta_\zeta)$ be an isometry. If $\phi\in\GLn$ and $x_o\in\R$ are as in Proposition~\ref{prop:affinity}, then
$$I(\Ind_K+t)=\Ind_{\phi K + x_o}+f(t)$$
for every $K\in\Knn$ and every $t\in\R$, where $f:\R\to\R$ is given by
\begin{equation}
\label{eq:def_f}
f(t)=\zeta^{-1}\left(\frac{\zeta(t)}{|\det \phi|}\right)
\end{equation}
for $t\in\R$. In particular, $\phi$ is such that the right-hand side of \eqref{eq:def_f} is well-defined.
\end{lemma}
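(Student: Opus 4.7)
By Proposition~\ref{prop:affinity} the domain identity $\dom I(\Ind_K+t)=\phi K+x_o=:L$ is already in hand for every $K\in\Knn$ and $t\in\R$. Setting $g:=I(\Ind_K+t)$, the remaining task is to show that $g\equiv f(t)$ on $L$; observe that once this is established, the well-definedness of $f(t)$ (i.e.\ that $\zeta(t)/|\det\phi|$ lies in the range of $\zeta$) will fall out for free. The main conceptual obstacle is precisely the constancy of $g$ on $L$: Proposition~\ref{prop:iso_measure_pres} alone fixes only an integral of $\zeta\circ g$ and gives no pointwise information, while the inductive partition machinery alone produces no values.

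The plan is to combine measure preservation with arbitrarily fine inductive partitions. For any inductive partition $\Gamma_K=\{K_1,\ldots,K_m\}$ of $K$, Proposition~\ref{prop:affinity} gives $\dom I(\Ind_{K_i}+t)=L_i:=\phi K_i+x_o$, while Lemma~\ref{le:partition_decomposition} yields $g=\bigwedge_i I(\Ind_{K_i}+t)$ together with pairwise empty interior intersection of the $L_i$. Hence $g$ agrees with $I(\Ind_{K_i}+t)$ on $\interior L_i$ up to a set of measure zero, and Proposition~\ref{prop:iso_measure_pres} applied to $\Ind_{K_i}+t$ gives
$$\int_{L_i}\zeta(g(x))\d x=\zeta(t)V_n(K_i)=\frac{\zeta(t)}{|\det\phi|}V_n(L_i).$$

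Now fix $x_0\in\interior L$ and apply Lemma~\ref{le:polytope_partition} to obtain, for each $N\in\N$, an inductive partition of $K$ all of whose pieces have diameter at most $1/N$; the corresponding pieces $L_i^{(N)}$ of $L$ then shrink uniformly to points. Since the union over $N$ of the partition boundaries has Lebesgue measure zero, for almost every $x_0\in\interior L$ the point $x_0$ lies in the interior of some piece $L^{(N)}$ for each $N$. Dividing the identity above by $V_n(L^{(N)})$ yields
$$\frac{1}{V_n(L^{(N)})}\int_{L^{(N)}}\zeta(g(x))\d x=\frac{\zeta(t)}{|\det\phi|},$$
and as $N\to\infty$ the left-hand side converges to $\zeta(g(x_0))$ by continuity of $\zeta$ and of $g$ on $\interior L$ (the latter from convexity of $g$). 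Thus $\zeta(g(x_0))=\zeta(t)/|\det\phi|$ holds almost everywhere on $\interior L$, which both places $\zeta(t)/|\det\phi|$ in the range of $\zeta$ and gives $g(x_0)=f(t)$ there.

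The rest is routine. Continuity of $g$ on $\interior L$ upgrades the almost-everywhere identity to $g\equiv f(t)$ on all of $\interior L$. For a boundary point $y\in L\setminus\interior L$, pick any $x\in\interior L$: lower semicontinuity of $g$ along the segment from $x$ to $y$ gives $g(y)\leq f(t)$, while convexity applied at the midpoint $(x+y)/2\in\interior L$, where $g=f(t)$, yields $g(y)\geq 2g\bigl(\tfrac{x+y}{2}\bigr)-g(x)=f(t)$. Hence $g\equiv f(t)$ on $L$, which combined with the already-known domain identity gives $I(\Ind_K+t)=\Ind_{\phi K+x_o}+f(t)$, as claimed.
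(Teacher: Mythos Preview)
Your proof is correct and follows essentially the same approach as the paper: both combine fine inductive partitions (Lemma~\ref{le:polytope_partition}) with Proposition~\ref{prop:iso_measure_pres} and Lemma~\ref{le:partition_decomposition} to pin down the average of $\zeta\circ g$ over each small piece $L_i$ as exactly $\zeta(t)/|\det\phi|$, and from this extract constancy of $g$. The only difference is in the final step---the paper argues by contradiction (two interior points with different values yield opposite strict inequalities for $\zeta(t)$), whereas you pass to the limit directly via averaging; your explicit treatment of the boundary points of $L$ is a detail the paper leaves implicit.
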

\begin{proof}
Fix arbitrary $K\in\Knn$ and $t\in\R$ and let $u=I(\Ind_K+t)\in\CVcn$. It follows from Proposition~\ref{prop:affinity} that $\dom u =\phi K +x_0=:L$ and thus
$$u=u+\Ind_L.$$
We will first show that the restriction of $u$ to $L$ is constant. Assume on the contrary, that $u$ is not constant on $L$. In this case there exist $y_0,y_1\in\interior L$ and $s_0,s_1\in\R$ such that
$$u(y_0)=s_0 < s_1=u(y_1).$$
Since a convex functions is continuous on the interior of its domain, there exists $\varepsilon>0$ such that $B(y_0,\varepsilon)\subset L$ and $B(y_1,\varepsilon)\subset L$ and furthermore
\begin{equation}
\label{eq:u_y0_y1_s0_s1}
u(y)<\frac{s_0+s_1}{2}\quad \forall y\in B(y_0,\varepsilon) \qquad \text{and} \qquad  u(y)>\frac{s_0+s_1}{2}\quad \forall y\in B(y_1,\varepsilon).
\end{equation}
By Lemma~\ref{le:polytope_partition} there exists an inductive partition $\Gamma_{L}=\{L_1,\ldots,L_m\}$ of $L$ such that $\diam L_i<\varepsilon$ for every $i\in\{1,\ldots,m\}$. Furthermore, defining $\alpha(x)=\phi^{-1}(x-x_0)$ for $x\in\Rn$, we have $\alpha(L) = K$ and an inductive partition $\Gamma_K=\{K_1,\ldots,K_m\}$ of $K$ is obtained by setting $K_i=\alpha(L_i)$ for $i\in\{1,\ldots,m\}$. Now if $u_i:=I(\Ind_{K_i}+t)\in\CVcn$, then it follows from Proposition~\ref{prop:affinity} that $u_i=u_i+\Ind_{L_i}$ for every $i\in\{1,\ldots,m\}$. Thus, by Lemma~\ref{le:partition_decomposition}, 
$$u+\Ind_L = I(\Ind_K+t)=I(\Ind_{K_1}+t)\wedge \cdots \wedge I(\Ind_{K_m}+t)=(u_1+\Ind_{L_1})\wedge \cdots \wedge (u_m+\Ind_{L_m})$$
where the intersection of the domains of each two functions on the right-hand side has dimension less or equal to $n-1$. In particular, this implies that $u_i(y)=u(y)$ for every $y\in L_i$.

Next, let $i_0,i_1\in\{1,\ldots,m\}$ be such that $y_0\in L_{i_0}$ and $y_1\in L_{i_1}$. By our assumptions on $\Gamma_L$ we have $L_{i_0} \subset B(y_0,\varepsilon)$ and therefore, by \eqref{eq:u_y0_y1_s0_s1}, $$u_{i_0}(y)=u(y)< \frac{s_0+s_1}{2}$$
for every $y\in L_{i_0}$. Hence,
$$\Psi_{\zeta}(\epi u_{i_0})> \zeta\left(\frac{s_0+s_1}{2}\right) V_n(L_{i_0}) = |\det \phi|\,\zeta\left(\frac{s_0+s_1}{2}\right) V_n(K_{i_0}).
$$
Since by Proposition~\ref{prop:iso_measure_pres} also
$$\Psi_{\zeta}(\epi u_{i_0}) = \Psi_{\zeta}(\epi (\Ind_{K_{i_0}}+t))=\zeta(t) V_n(K_{i_0})$$
we therefore obtain
$$\zeta(t)>|\det \phi |\,\zeta\left(\frac{s_0+s_1}{2}\right).$$
Similarly, repeating the last steps with $i_1$, we obtain
$$\zeta(t)<|\det \phi|\,\zeta\left(\frac{s_0+s_1}{2}\right)$$
which is a contradiction. Thus, we conclude that $u$ is constant on its domain.

\medskip

By the first part of the proof there exists $s\in\R$ such that
$$I(\Ind_K+t)=\Ind_{\phi K +x_0}+s.$$
Using Proposition~\ref{prop:iso_measure_pres} again now shows
\begin{align*}
\zeta(t)V_n(K)&= \Psi_{\zeta}(\epi(\Ind_K+t))\\
&=\Psi_{\zeta}(\epi(\Ind_{\phi K+x_0}+s))\\
&= |\det\phi|\,\zeta(s) V_n(K)
\end{align*}
and thus
$$s=\zeta^{-1}\left(\frac{\zeta(t)}{|\det \phi|}\right).$$
In particular, $s$ does not depend on $K$, which completes the proof.
\end{proof}

\begin{remark}
Note that the last result implies that if $\lim_{t\to\infty}\zeta(-t)=A<\infty$, then $|\det \phi|\geq 1$. Otherwise, we could find $t\in\R$ such that $\zeta(t)/|\det \phi|>A$, in which case \eqref{eq:def_f} is not well-defined.
\end{remark}

\subsection{Proof of Theorem~\ref{thm:class_iso}}
We will use ideas from the proof of \cite[Theorem 8.1]{cavallina_colesanti}.

\bigskip

Let $\phi\in\Phi(\zeta)$ and $x_0\in\Rn$ be given and let $\alpha(x)=\phi(x)+x_0$ for $x\in\Rn$ and $f(t)=\zeta^{-1}(\zeta(t)/|\det \phi|)$ for $t\in\R$. It is easy to see that $u\circ \alpha^{-1}\in\CVcn$ for every $u\in\CVcn$. Furthermore, since $\zeta$ is strictly decreasing with $\lim_{t\to\infty}\zeta(t)=0$ the function $f$ is strictly increasing with $\lim_{t\to\infty} f(t)=+\infty$. Moreover, since $f$ is convex it follows that
\begin{align*}
f(u(\lambda x+(1-\lambda)y))&\leq f(\lambda u(x) + (1-\lambda) u(y))\\
&\leq \lambda f(u(x)) + (1-\lambda) f(u(y))
\end{align*}
for every $x,y\in \Rn$, $0\leq \lambda \leq 1$ and every convex function $u:\Rn\to(-\infty,+\infty]$. Hence, $I(u):=f(u\circ \alpha^{-1})\in\CVcn$ for every $u\in\CVcn$. Now for every $u,v\in\CVcn$
\begin{align*}
\delta_{\zeta}(I(u),I(v)) &= \int_{\Rn} |\zeta(I(u)(x))-\zeta(I(v)(x))|\d x\\
&= \int_{\Rn} |\zeta(f(u(\phi^{-1} (x-x_0))))-\zeta(f(v(\phi^{-1}(x-x_0))))|\d x\\
&= |\det \phi| \int_{\Rn} |\zeta(f(u(x)))-\zeta(f(v(x)))|\d x\\
&= |\det \phi| \int_{\Rn} \left|\frac{\zeta(u(x))}{|\det \phi|}-\frac{\zeta(v(x))}{|\det \phi|} \right| \d x\\
&= \int_{\Rn} |\zeta(u(x))-\zeta(v(x))| \d x\\
&= \delta_{\zeta}(u,v)
\end{align*}
which shows that $I$ is an isometry.

\medskip

Conversely, let an isometry $I:(\CVcn,\delta_\zeta)\to(\CVcn,\delta_\zeta)$ be given and fix an arbitrary $u\in\CVcn$. We will first consider the case that $\dom u = K\in\Knn$ and in particular that the restriction of $u$ to $K$ is bounded. Let $m_0=\min_{x\in K} u(x)$ and $m_1=\max_{x\in K} u(x)$. By Lemma~\ref{le:iso_f} there exist $\phi\in\GLn$ and $x_0\in\R$ such that
$$I(\Ind_K+m_0) = \Ind_{L} + f(m) \qquad \text{and} \qquad I(\Ind_K+m_1)=\Ind_{L}+f(m_1)$$
where $L=\phi K+x_0$ and $f(t)=\zeta^{-1}(\zeta(t)/|\det \phi|)$ for $t\in\R$. In particular, $f$ is well-defined and, by the properties of $\zeta$, the function $f$ is strictly increasing with $\lim_{t\to\infty} f(t)=+\infty$. Since
$$\Ind_K+m_0 \leq u \leq \Ind_K+m_1$$
it follows from Lemma~\ref{le:isometry_preserves_order_cup_cap} \ref{it:strict_incl} that
$$\Ind_{L}+f(m_0)\leq I(u)\leq \Ind_{L}+f(m_1).$$
In particular, $\dom I(u) =L$ and the restriction of $I(u)$ to $L$ is continuous and bounded.

Fix an arbitrary $\varepsilon>0$. We will show that 
\begin{equation}
\label{eq:diff_i_u_f_u_a_inv}
|I(u)-f(u\circ \alpha^{-1})|<\varepsilon
\end{equation}
pointwise, where $\alpha(x)=\phi x + x_0$ for $x\in\Rn$. Since $\varepsilon$ is arbitrary, this then implies that $I(u)=f(u\circ \alpha^{-1})$. The fact that $f$ is convex then follows easily from the fact that $f(u\circ \alpha^{-1})=I(u)\in\CVcn$ for every $u\in\CVcn$.

First observe that $f(u(\alpha^{-1} (x)))=+\infty$ if and only if $u(\alpha^{-1}(x))=+\infty$ if and only if $\phi^{-1}(x-x_0) \not\in K$ if and only if $x\not\in L$. Hence $f(u\circ \alpha^{-1})\equiv I(u)$ pointwise on $\Rn\backslash L$.

Since $f$ is continuous, its restriction to the compact interval $[m_0,m_1]$ is uniformly continuous. Hence, there exists $\gamma>0$ such that
$$|f(s)-f(t)|< \varepsilon$$
for every $s,t\in[m_0,m_1]$ with $|s-t|<\gamma$. Similarly, the restriction of $u$ to $K$ is uniformly continuous. Hence, there exists $\eta>0$ such that
$$|u(x)-u(y)|<\gamma$$
for every $x,y\in K$ with $|x-y|<\eta$ and thus, since $u(x),u(y)\in[m_0,m_1]$,
$$|f(u(x))-f(u(y))|<\varepsilon.$$
By Lemma~\ref{le:polytope_partition} there exists an inductive partition $\Gamma_K=\{K_1,\ldots,K_m\}$ of $K$ such that $\diam P_i\leq \eta$ for every $i\in\{1,\ldots,m\}$. Furthermore, denote $m_{0,i}=\min_{x\in K_i} u(x)$ and $m_{1,i}=\max_{x\in K_i} u(x)$ for $i\in\{1,\ldots,m\}$. By the choice of our partition we now have
\begin{equation}
\label{eq:diff_f_m_i}
f(m_{1,i})-f(m_{0,i})=|f(m_{1,i})-f(m_{0,i})|<\varepsilon
\end{equation}
for every $i\in\{1,\ldots,m\}$. Moreover, a corresponding inductive partition $\Gamma_L=\{L_1,\ldots,L_m\}$ of $L$ is obtained by taking $L_i=\alpha(K_i)$ for every $i\in\{1,\ldots,m\}$. By Lemma~\ref{le:partition_decomposition} we now have
\begin{align}
\begin{split}
\label{eq:i_u_part_q_i_p_i}
(I(u)+\Ind_{L_1}) \wedge \cdots \wedge (I(u)+\Ind_{L_m}) &= I(u)+\Ind_L\\
&= I(u)\\
&= I(u+\Ind_K)\\
&= I(u+\Ind_{K_1})\wedge \cdots \wedge I(u+\Ind_{K_m})
\end{split}
\end{align}
where for the first and last expression the intersections of the domains of each two functions have empty interior. Since $\Ind_{K_i}+m_{0,i}\leq \Ind_{K_i}+u \leq \Ind_{K_i}+m_{1,i}$ it follows from Lemma~\ref{le:iso_f} and Lemma~\ref{le:isometry_preserves_order_cup_cap} \ref{it:strict_incl}, similarly as before, that
$$\Ind_{L_i}+f(m_{0,i}) \leq I(u+\Ind_{K_i})\leq \Ind_{L_i}+f(m_{1,i})$$
for every $i\in\{1,\ldots,m\}$. In particular, $\dom I(u+\Ind_{K_i}) = L_i$ and thus, by \eqref{eq:i_u_part_q_i_p_i}, $I(u+\Ind_{K_i})=I(u)+\Ind_{L_i}$. Hence,
$$\Ind_{L_i}+f(m_{0,i})\leq I(u)+\Ind_{L_i} \leq \Ind_{L_i}+f(m_{1,i})$$
for every $i\in\{1,\ldots,m\}$. Since $x\in L_i$ implies $\alpha^{-1} (x) \in K_i$ we furthermore have, by the definition of $m_{0,i}$ and $m_{1,i}$,
$$f(m_{0,1}) \leq f(u(\alpha^{-1} (x))) \leq f(m_{1,i})$$
for every $x\in L_i$. Thus, combining the last two inequalities with \eqref{eq:diff_f_m_i} we obtain
$$|I(u)(x)-f(u(\alpha^{-1}(x)))|\leq f(m_{1,i})-f(m_{0,i}) < \varepsilon$$
for every $x\in L_i$ and for every $i\in\{1,\ldots,m\}$. Since $\bigcup_{i=1}^m L_i=L$, this proves \eqref{eq:diff_i_u_f_u_a_inv}.

\medskip

Lastly, we consider the case that $u\in\CVcn$ is arbitrary. Let $K_j\in\Knn$, $j\in\N$, be an increasing sequence of convex bodies such that
$$\bigcup_{j=1}^{\infty} K_j = \dom u$$
and such that the restriction of $u$ to each of the sets $K_j$ is bounded. Consider the sequence of functions $u_j:=u+\Ind_{K_j}\in\CVcn$ for $j\in\N$. Since $u_j$ converges to $u$ pointwise on $\Rn\backslash \bd (\dom u)$, it follows from Lemma~\ref{le:epi_conv_equiv} that $u_j\eto u$ as $j\to\infty$. Since isometries are continuous with respect to epi-convergence this implies that $I(u_j)\eto I(u)$. By the first part of the proof we have
$$I(u_j)=f((u+\Ind_{K_j})\circ \alpha^{-1}) = f(u\circ \alpha^{-1})+\Ind_{\alpha(K_j)}$$
for every $j\in\N$ and thus
$$I(u_j)\eto f(u\circ \alpha^{-1})+\Ind_{\dom u} = f(u\circ \alpha^{-1})$$
as $j\to\infty$. Since the limit is unique, we conclude that $I(u)=f(u\circ \alpha^{-1})$.\hfill\qedsymbol

\section{Further Metrics}
\label{se:further_metrics}

In this section we introduce two further metric which can be seen as functional analogs of the Hausdorff metric. In Section~\ref{subse:ext_hd_metric} we define a metric on $\CVc$ by integrating over an extension of the Hausdorff metrics of the level sets of two functions. We show that convergence with respect to this metric is equivalent to epi-convergence. However, this metric is not invariant with respect to translations.

We furthermore introduce another analog of the Hausdorff metric on $\CVc$ in Section~\ref{subse:another_hd_type_metric} which is of a more geometric nature. Convergence with respect to this metric implies epi-convergence but the converse is only true for super-coercive functions. However, this metric is even invariant with respect to translations of epigraphs.

\subsection{A New Metric Based on an Extension of the Hausdorff Distance}
\label{subse:ext_hd_metric}
Since epi-convergence on $\CVc$ corresponds to Hausdorff-convergence of level sets (see\linebreak Lemma~\ref{le:hd_conv_lvl_sets}), one approach to define a new distance of two functions $u,v\in\CVc$ that is equivalent to epi-convergence, is to integrate over the Hausdorff distances of their level sets. However, this creates the problem that one also needs to define a distance between the empty set and a non-empty set.

\medskip
In the following we will call a map $\tilde{d}_H:(\Kn\cup \emptyset) \times (\Kn\cup \emptyset)\to [0,\infty]$ an \emph{extension} of the Hausdorff metric $d_H$ if $\tilde{d}_H$ is a metric and $\tilde{d}_H(K,L)=d_H(K,L)$ for every $K,L\in\Kn$. As the next result shows, there is only one way to find an extension of the Hausdorff metric that preserves translation invariance.

\begin{lemma}
\label{le:hd_ext_infty}
A metric $\tilde{d}_H$ is an extension of $d_H$ such that $\tilde{d}_H(K+x,L+x)= \tilde{d}_H(K,L)$ for every $K,L\in\Kn \cup \emptyset$ and $x\in\Rn$ if and only if
\begin{equation}
\label{eq:infty_extension_hd}
\tilde{d}_H(K,L)=\begin{cases}
d_H(K,L)\quad & K\neq \emptyset \neq L\\
0\quad & K = L = \emptyset\\
+\infty\quad& K\neq \emptyset, L=\emptyset \text{ or } K=\emptyset, L\neq \emptyset
\end{cases}
\end{equation}
for $K,L\in\Kn\cup \emptyset$.
\end{lemma}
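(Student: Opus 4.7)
The argument I would give has two directions, with the "only if" direction being the content.

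For the \emph{if} direction, I would simply check that the piecewise formula \eqref{eq:infty_extension_hd} defines a metric on $\Kn\cup\emptyset$ (allowing the value $+\infty$): symmetry and positive-definiteness are immediate from the definition. For the triangle inequality, if all three points are in $\Kn$ it reduces to the usual triangle inequality for $d_H$; in every other configuration, at least one summand on the right-hand side equals $+\infty$, so the inequality is trivial. Translation invariance is clear since $\emptyset+x=\emptyset$ and $d_H$ itself is translation invariant.

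For the \emph{only if} direction, assume $\tilde d_H$ is any translation-invariant extension of $d_H$. The only assertion requiring proof is that $\tilde d_H(K,\emptyset)=+\infty$ for every nonempty $K\in\Kn$. Since $\emptyset+x=\emptyset$, translation invariance gives $\tilde d_H(K+x,\emptyset)=\tilde d_H(K,\emptyset)$ for every $x\in\Rn$. The triangle inequality then yields
$$d_H(K,K+x)=\tilde d_H(K,K+x)\leq \tilde d_H(K,\emptyset)+\tilde d_H(\emptyset,K+x)=2\,\tilde d_H(K,\emptyset).$$
Using the support-function identity $h(K+x,u)=h(K,u)+\langle x,u\rangle$ and $d_H(K,L)=\max_{u\in\Sph}|h(K,u)-h(L,u)|$, one has $d_H(K,K+x)=|x|$. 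Letting $|x|\to\infty$ forces $\tilde d_H(K,\emptyset)=+\infty$, which is precisely the remaining case in \eqref{eq:infty_extension_hd}.

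The main (and only) conceptual step is the triangle-inequality chain above; once one notices that translation invariance of $\emptyset$ is automatic, the argument writes itself. No serious obstacle is anticipated; the only delicate point is stating the setup so that $+\infty$ is allowed as a value of $\tilde d_H$, which is already built into the hypothesis $\tilde d_H:(\Kn\cup\emptyset)\times(\Kn\cup\emptyset)\to[0,\infty]$.
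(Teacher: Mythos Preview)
Your proof is correct and follows essentially the same approach as the paper: both directions use the triangle inequality with $\emptyset$ as the intermediate point together with translation invariance to force $\tilde d_H(K,\emptyset)=+\infty$. Your ``only if'' argument is in fact slightly more direct than the paper's, which first bounds $\tilde d_H(K,\{x\})$ by $\tilde d_H(K,\emptyset)+\tilde d_H(\emptyset,\{0\})$ and then needs a second triangle-inequality step to handle the case where only $\tilde d_H(\emptyset,\{0\})=+\infty$; by using $K+x$ instead of $\{x\}$ you get $2\,\tilde d_H(K,\emptyset)$ on the right immediately and avoid that detour.
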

\begin{proof}
It is easy to check that \eqref{eq:infty_extension_hd} defines a metric with the desired properties.

Conversely, let $\tilde{d}_H$ be an extension of $d_H$ such that $\tilde{d}_H(K+x,L+x)= \tilde{d}_H(K,L)$ for every $K,L\in\Kn \cup \emptyset$ and $x\in\Rn$. By the triangle inequality together with translation invariance
$$\tilde{d}_H(K,\{x\})\leq \tilde{d}_H(K,\emptyset) + \tilde{d}_H(\emptyset,\{x\}) = \tilde{d}_H(K,\emptyset) + \tilde{d}_H(\emptyset,\{0\})$$
for every $K\in\Kn$ and $x\in\Rn$. In particular, the left-hand side of this expression can be arbitrarily large and therefore the inequality implies that either $\tilde{d}_H(K,\emptyset)=+\infty$ for every $K\in\Kn$ and/or $\tilde{d}_H(\emptyset,\{0\})=+\infty$. Since
$$\tilde{d}_H(\emptyset,\{0\}) \leq \tilde{d}_H(\emptyset,K)+ \tilde{d}_H(K,\{0\})=\tilde{d}_H(\emptyset,K)+ d_H(K,\{0\})$$
for every $K\in\Kn$, also the case $\tilde{d}_H(\emptyset,\{0\})=+\infty$ implies that $\tilde{d}_H(K,\emptyset)=+\infty$. Thus, we conclude that $\tilde{d}_H$ is as in \eqref{eq:infty_extension_hd}.
\end{proof}

It is easy to see that for $u,v\in\CVc$,
$$\tilde{d}(u,v):=\int_0^{+\infty} \tilde{d}_H(\{e^{-u} \geq s\},\{e^{-v} \geq s\}) \d s$$
defines a metric on $\CVc$, where $\tilde{d}_H$ is as in Lemma~\ref{le:hd_ext_infty}. However, if for arbitrary $K\in\Kn$ we choose $u=\Ind_K$ and $u_j=\Ind_K+\tfrac 1j$, $j\in\N$, then $\tilde{d}(u_j,u)=+\infty$ for every $j\in\N$ while $u_j\eto u$ as $j\to+\infty$. In particular, convergence with respect to this metric is not equivalent to epi-convergence. Thus, for our purposes we need to find an extension of the Hausdorff distance that is not translation invariant anymore.

\medskip

In the following, define $\hat{d}_H:(\Kn\cup\emptyset) \times (\Kn \cup \emptyset)\to [0,\infty)$ as
$$
\hat{d}_H(K,L)=\begin{cases}
d_H(K,L)\quad & K\neq \emptyset \neq L\\
0\quad & K = L = \emptyset\\
\max\{1,d_H(K,\{0\})\}\quad& K\neq \emptyset, L= \emptyset\\
\max\{1,d_H(L,\{0\})\}\quad &K=\emptyset, L\neq \emptyset
\end{cases}
$$
for $K,L\in\Kn\cup \emptyset$.

\begin{lemma}
The functional $\hat{d}_H$ is an extension of the Hausdorff metric.
\end{lemma}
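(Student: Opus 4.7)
The plan is to verify the four defining properties of a metric; the fact that $\hat{d}_H$ agrees with $d_H$ on $\Kn \times \Kn$ is immediate from the first line of the definition. Non-negativity is clear since $d_H \geq 0$ and the $\max\{1,\cdot\}$ in the mixed cases ensures a positive value. Symmetry holds by inspection. For the identity of indiscernibles, $\hat{d}_H(K,L) = 0$ forces $K,L$ to both be empty or both to be non-empty; in the latter case $d_H(K,L) = 0$ gives $K = L$.

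The only substantive step is the triangle inequality $\hat{d}_H(K,M) \leq \hat{d}_H(K,L) + \hat{d}_H(L,M)$, which I would split by how many of the three arguments are empty. If none are empty, it is just the triangle inequality for $d_H$. If all three are empty, both sides are zero. The two remaining families of subcases are the following. First, if the middle argument $L$ is empty but $K, M \in \Kn$, then I would use $d_H(K,M) \leq d_H(K,\{0\}) + d_H(\{0\},M) \leq \max\{1,d_H(K,\{0\})\} + \max\{1,d_H(M,\{0\})\}$. Second, if exactly one endpoint is empty, say $K = \emptyset$ and $L,M \in \Kn$, then I need $\max\{1,d_H(M,\{0\})\} \leq \max\{1,d_H(L,\{0\})\} + d_H(L,M)$; this follows by considering separately the cases $d_H(M,\{0\}) \leq 1$ (where the bound $1 \leq \max\{1,d_H(L,\{0\})\} + d_H(L,M)$ suffices since the right-hand side is at least $1$) and $d_H(M,\{0\}) > 1$ (where the triangle inequality $d_H(M,\{0\}) \leq d_H(M,L) + d_H(L,\{0\})$ combined with $d_H(L,\{0\}) \leq \max\{1,d_H(L,\{0\})\}$ does the job). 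The subcases with two empty arguments are immediate from the definition.

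I do not expect a real obstacle here; the argument is a short case analysis. The only point that requires a moment's thought is that the asymmetric treatment of the empty set (truncating the distance to $\{0\}$ from below by $1$) is compatible with the triangle inequality, and this is precisely what the $\max\{1,\cdot\}$ buys: it gives enough slack on the right-hand side to absorb both the ordinary triangle inequality for $d_H$ and the case distinction above.
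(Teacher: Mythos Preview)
Your proposal is correct and follows essentially the same approach as the paper: both verify the basic axioms quickly and then handle the triangle inequality by a case analysis on how many of the three sets are empty, with the two nontrivial cases treated exactly as you describe (the triangle inequality through $\{0\}$ when the middle argument is empty, and the split on whether $d_H(\cdot,\{0\}) \leq 1$ when one endpoint is empty). The only difference is notational: the paper writes the inequality as $\hat{d}_H(K,L) \leq \hat{d}_H(K,M)+\hat{d}_H(M,L)$ with $M$ as the middle term, whereas you use $L$.
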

\begin{proof}
It is easy to see that $\hat{d}_H(K,L)\geq 0$ and $\hat{d}_H(K,L)=\hat{d}_H(L,K)$ for every $K,L\in\Kn\cup \emptyset$ and furthermore $\hat{d}_H(K,L)=0$ if and only if $K=L$. Moreover, $\hat{d}_H(K,L)=d_H(K,L)$ for every $K,L\in\Kn$. It remains to show the triangle inequality
\begin{equation}
\label{eq:hd_triangle_ineq}
\hat{d}_H(K,L) \leq \hat{d}_H(K,M)+\hat{d}_H(M,L).
\end{equation}
In case $K,L,M\in\Kn$ this trivially follows from the corresponding property of the Hausdorff metric. Furthermore, the cases $K=L=M=\emptyset$ as well as $K=L=\emptyset, M\neq \emptyset$ and $M=K=\emptyset, L\neq \emptyset$ are trivial. We will distinguish between the remaining cases.
\begin{itemize}
	\item $M=\emptyset, K\neq\emptyset\neq L$: In this case \eqref{eq:hd_triangle_ineq} follows from the triangle inequality for the Hausdorff metric since 
	\begin{align*}
	\hat{d}_H(K,L) &= d_H(K,L)\\
	&\leq d_H(K,\{0\}) + d_H(\{0\},L)\\
	&\leq \max\{1,d_H(K,\{0\})\} + \max\{1,d_H(L,\{0\})\}\\
	&= \hat{d}_H(K,\emptyset) + \hat{d}_H(\emptyset,L).
	\end{align*}
	\item $K=\emptyset, M\neq\emptyset\neq L$: In this case we need to show
	$$\max\{1,d_H(L,\{0\})\}\leq \max\{1,d_H(M,\{0\})\} + d_H(M,L).$$
	If $d_H(L,\{0\})\leq 1$, then the inequality above is trivial. In the remaining case $d_H(L,\{0\})> 1$, the inequality follows again from the triangle inequality for the Hausdorff metric since
	\begin{align*}
	\max\{1,d_H(L,\{0\})\} &=d_H(L,\{0\})\\
	&\leq d_H(M,\{0\}) + d_H(M,L)\\
	&\leq \max\{1,d_H(M,\{0\})\} + d_H(M,L).
	\end{align*}
\end{itemize}
Thus, we have proved \eqref{eq:hd_triangle_ineq} and $\hat{d}_H$ is an extension of the Hausdorff metric.
\end{proof}

\begin{remark}
\label{re:hd_conv_lvl_sets}
It is easy to see from the definition of $\hat{d}_H$ that for a sequence $K_j\in\Kn \cup \emptyset$ one has $\hat{d}_H(K_j,\emptyset)\to 0$ if and only if there exists $j_0\in\N$ such that $K_j=\emptyset$ for every $j\geq j_0$. Therefore, we can reformulate Lemma~\ref{le:hd_conv_lvl_sets} as follows: If $u_k, u\in\CVc$ are such that $u_k\eto u$, then $\hat{d}_H(\{u_k\leq t\},\{u\leq t\})\to 0$ as $k\to\infty$ for every $t\neq \min_{x\in\Rn} u(x)$.
\end{remark}

We will now introduce a new functional on $\CVc$. Let $\zeta:\R\to[0,\infty)$ be a continuous, decreasing function that has finite moment of order $0$, i.e., $\int_0^\infty \zeta(t)\d t <+\infty$. We define $\delta_{\zeta}^H:\CVc\times \CVc \to [0,\infty)$ as
$$\delta_\zeta^H(u,v)=\int_0^{+\infty} \hat{d}_H(\{\zeta\circ u \geq s\},\{\zeta \circ v \geq s\}) \d s$$
for $u,v\in\CVc$. In Lemma~\ref{le:delta_zeta_h_is_a_metric} we will show that this defines a metric under additional assumptions on the function $\zeta$. However, unlike the metric $\delta_{\zeta,p}$ from Section~\ref{se:sym_diff_metric_cvx_fcts}, this functional is not translation invariant, i.e.,
$$\delta_\zeta^H(u,v)\neq \delta_\zeta^H(u(\cdot-x_0),v(\cdot-x_0))$$
in general, for $x_0\in\Rn\backslash\{0\}$.

\begin{example}
Let $u=\Ind_K+s$ and $v=\Ind_L+t$ with $K,L\in\Kn$ and $s,t\in\R$, $s\leq t$. It is easy to see that
\begin{align*}
\delta_\zeta^H(u,v)&=(\zeta(s)-\zeta(t))\,\hat{d}_H(K,\emptyset)+\zeta(t)\,\hat{d}_H(K,L)\\
&=(\zeta(s)-\zeta(t))\,\max\{1,d_H(K,\{0\})\}+\zeta(t)\,d_H(K,L).
\end{align*}
In particular, if $\zeta(s)\neq\zeta(t)$, then this expression is not invariant under joint translations of $K$ and $L$.
\end{example}

We will need the following result. Recall that $h(K,z)=\max_{x\in K} \langle z,x \rangle$ denotes the support function of $K\in\Kn$ at $z\in\Sph$. Furthermore, we set $h(\emptyset,\cdot)\equiv 0$ throughout the following.

\begin{lemma}[\!\!\cite{colesanti_ludwig_mussnig_mink}, Lemma 7.1]
\label{le:support_function_intable}
If $\zeta:\R\to[0,\infty)$ is a continuous, decreasing function such that $\int_0^{\infty} \zeta(t)\d t <+\infty$, then
$$\left|\int_0^{+\infty} h(\{\zeta \circ u \geq s\},z) \d s \right| <+\infty$$
for every $u\in\CVc$ and $z\in\mathbb{S}^{n-1}$.
\end{lemma}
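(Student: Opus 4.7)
The plan is to bound $|h(\{\zeta\circ u\ge s\},z)|$ by the circumradius of the corresponding superlevel set, exploit the coercivity cone to control this radius, and then identify the remaining integral by a layer-cake computation.

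First, fix $u\in\CVc$ and $z\in\Sph$. By Lemma~\ref{le:coercive_super_coercive} there exist $a>0$ and $b\in\R$ with $u(x)\ge a|x|+b$ for every $x\in\Rn$. Since $\zeta$ is continuous, decreasing, and $\int_0^\infty\zeta(t)\d t<+\infty$ forces $\lim_{t\to\infty}\zeta(t)=0$, the function $f(s):=\sup\{t\in\R\colon \zeta(t)\ge s\}$ takes values in $\R$ for every $s>0$, and
$$\{\zeta\circ u\ge s\}=\{u\le f(s)\}.$$
From the cone bound, this set is empty when $f(s)<b$ and contained in $B\bigl(0,(f(s)-b)/a\bigr)$ otherwise. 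Since $|h(K,z)|\le \max_{y\in K}|y|$ for $|z|=1$ and $h(\emptyset,z)=0$, I obtain the pointwise bound
$$|h(\{\zeta\circ u\ge s\},z)|\le \tfrac{1}{a}\bigl(f(s)-b\bigr)_+$$
for every $s>0$.

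Second, I will compute $\int_0^\infty (f(s)-b)_+\d s$ via Fubini. Because $\zeta$ is continuous and decreasing with limit $0$ at $+\infty$, the set $\{t\ge b\colon \zeta(t)\ge s\}$ equals $[b,f(s)]$ for $s\le \zeta(b)$ and is empty for $s>\zeta(b)$. Hence
$$\int_0^\infty (f(s)-b)_+\d s=\int_0^{\zeta(b)}(f(s)-b)\d s=\int_b^{\infty}\zeta(t)\d t.$$
If $b\ge0$, this is dominated by $\int_0^\infty\zeta(t)\d t<+\infty$. If $b<0$, split the integral as $\int_b^0\zeta(t)\d t+\int_0^\infty\zeta(t)\d t$; the first summand is bounded by $|b|\,\zeta(b)$ using that $\zeta(t)\le\zeta(b)$ on $[b,0]$ by monotonicity, and $\zeta(b)<+\infty$ because $\zeta$ takes values in $[0,\infty)$. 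Either way the right-hand side is finite.

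Combining the two steps yields $\int_0^\infty |h(\{\zeta\circ u\ge s\},z)|\d s<+\infty$, which implies the stated bound on $|\int_0^\infty h(\{\zeta\circ u\ge s\},z)\d s|$. No step looks delicate: the only point that requires minor care is handling the sign of $b$ when converting the layer-cake identity into an integrability bound, and ensuring $f(s)$ is finite, which follows automatically from $\zeta\to0$ at $+\infty$.
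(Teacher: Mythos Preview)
Your proof is correct. The paper does not supply its own argument for this lemma---it is quoted from \cite{colesanti_ludwig_mussnig_mink}---so there is nothing to compare against directly; your approach via the coercivity cone and a layer-cake/Fubini computation is exactly the natural one and matches what is implicit in how the paper uses the result (cf.\ the proof of Lemma~\ref{le:epi_conv_implies_lvl_hd_conv}, where the bound is applied to a shifted cone $v(x)=a|x|+b-\tfrac1a$).

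One cosmetic point: your claim that $f(s)\in\R$ for every $s>0$ is not literally true when $\zeta$ is bounded, since for $s>\sup\zeta$ the defining set is empty and $f(s)=-\infty$. This does not affect the argument, because in that case $\{\zeta\circ u\ge s\}=\emptyset$ and both sides of your pointwise bound vanish; you might simply note this case separately.
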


\begin{lemma}
\label{le:epi_conv_implies_lvl_hd_conv}
Let $\zeta:\R\to[0,\infty)$ be  a continuous, decreasing function such that $\int_0^\infty \zeta(t)\d t <+\infty$. If $u_k,u\in\CVc$ are such that $u_k\eto u$, then $\delta_\zeta^H(u_k,u)\to 0$ as $k\to \infty$.
\end{lemma}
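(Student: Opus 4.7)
The strategy is to apply dominated convergence. Since $\zeta$ is continuous and decreasing, setting $\tau(s) := \sup\{t \in \R : \zeta(t) \geq s\}$ gives $\{\zeta \circ u_k \geq s\} = \{u_k \leq \tau(s)\}$ and $\{\zeta \circ u \geq s\} = \{u \leq \tau(s)\}$ for every $s > 0$, so it suffices to show
$$\int_0^{+\infty} \hat{d}_H\bigl(\{u_k \leq \tau(s)\},\{u \leq \tau(s)\}\bigr) \, ds \to 0$$
as $k \to \infty$. Pointwise a.e.\ convergence of the integrand to zero is immediate from Lemma~\ref{le:hd_conv_lvl_sets} together with Remark~\ref{re:hd_conv_lvl_sets}, since the exceptional set $\{s > 0 : \tau(s) = \min_{x \in \Rn} u(x)\}$ contains at most the single point $\zeta(\min u)$ by strict monotonicity of $s \mapsto \tau(s)$ on its range.

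For a dominating function, apply Lemma~\ref{le:uniform_cone} to obtain $a > 0$ and $b \in \R$ with $u_k(x), u(x) \geq a|x| + b$ for every $x \in \Rn$ and $k \in \N$. Then both $\{u_k \leq t\}$ and $\{u \leq t\}$ are empty whenever $t < b$, and contained in $B(0,(t-b)/a)$ whenever $t \geq b$. A case distinction on whether each of the two level sets is empty---using the triangle inequality $d_H(K,L) \leq d_H(K,\{0\}) + d_H(\{0\},L)$ when both are non-empty, and the definition of $\hat{d}_H$ otherwise---yields
$$\hat{d}_H\bigl(\{u_k \leq \tau(s)\},\{u \leq \tau(s)\}\bigr) \leq 1 + \frac{2(\tau(s)-b)}{a}$$
whenever $\tau(s) \geq b$, and equals zero otherwise.

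It remains to check integrability of this bound. The set $\{s > 0 : \tau(s) \geq b\}$ equals $(0, \zeta(b)]$, which has finite Lebesgue measure $\zeta(b)$. Writing $\tau(s) - b = \int_b^{+\infty} \mathbf{1}_{\{\zeta(t) \geq s\}} \, dt$ for $s$ with $\tau(s) \geq b$ and applying Fubini gives
$$\int_0^{\zeta(b)} (\tau(s) - b) \, ds = \int_b^{+\infty} \zeta(t) \, dt,$$
which is finite since $\zeta$ has finite moment of order $0$. The dominating function is thus integrable, and dominated convergence yields $\delta_\zeta^H(u_k,u) \to 0$.

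I expect no serious obstacle; the only point requiring genuine care is controlling the integrand as $s \to 0^+$, where the level sets grow unboundedly. The uniform cone property of Lemma~\ref{le:uniform_cone}, combined with the moment hypothesis on $\zeta$, is precisely what converts the growth of $\tau(s)$ into an integrable bound via the Fubini identity above.
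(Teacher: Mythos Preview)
Your proof is correct and follows essentially the same route as the paper: dominated convergence, with pointwise a.e.\ convergence coming from Lemma~\ref{le:hd_conv_lvl_sets} (via Remark~\ref{re:hd_conv_lvl_sets}) and the dominating function coming from the uniform cone property of Lemma~\ref{le:uniform_cone}. The only difference is in packaging: the paper constructs an auxiliary cone function $v(x)=a|x|+b-\tfrac{1}{a}$, bounds $\hat d_H$ by the diameter of its superlevel sets, and then invokes Lemma~\ref{le:support_function_intable} for integrability, whereas you bound $\hat d_H$ directly by $1+2(\tau(s)-b)/a$ and verify integrability by hand via the Fubini identity $\int_0^{\zeta(b)}(\tau(s)-b)\,ds=\int_b^{+\infty}\zeta(t)\,dt$. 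Your computation is exactly the content of Lemma~\ref{le:support_function_intable} specialized to this situation, so the two arguments are really the same---yours is just slightly more self-contained.
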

\begin{proof}
By Lemma~\ref{le:uniform_cone} there exist $a,b\in\R$ with $a>0$ such that $u_k(x) \geq a|x|+b$ for every $k\in\N$ and $x\in\Rn$. Define $v(x)=a|x|+b-\tfrac 1a$. The set $\{v\leq t\}$ is a ball with radius greater or equal to 1 for every $t\geq b$ that furthermore contains all sets $\{u_k\leq t\}$ and $\{u\leq t\}$. Hence,
$$
0\leq \hat{d}_H(\{\zeta\circ u_k \geq s\}, \{\zeta \circ u \geq s \}) \leq \operatorname{diam}(\{\zeta \circ v \geq s\})=2\,h(\{\zeta\circ v \geq s \},z)
$$
for every $s\geq 0$ and arbitrary $z\in\Sph$. Note that the last expression is integrable over $s\geq 0$ by Lemma~\ref{le:support_function_intable}. Furthermore, by Lemma~\ref{le:hd_conv_lvl_sets} and Remark~\ref{re:hd_conv_lvl_sets},
$$\hat{d}_H(\{\zeta\circ u_k \geq s\}, \{\zeta \circ u \geq s\}) \to 0$$
for a.e. $s>0$ as $k\to\infty$. Hence, we may apply the dominated convergence theorem to obtain
$$\lim\nolimits_{k\to+\infty} \delta_\zeta^H(u_k,u)=\lim\nolimits_{k\to+\infty} \int_0^{+\infty} \hat{d}_H(\{\zeta\circ u_k \geq s\}, \{\zeta \circ u \geq s\}) \d s = 0,$$
which completes the proof.
\end{proof}

It is easy to see that we need further restrictions on $\zeta$ in order for $\delta_\zeta^H$ to become a metric. In the following, let
$$\MomZ=\left\{\zeta:\R\to(0,\infty)\,:\, \zeta \text{ is continuous, strictly decreasing and } \int_0^\infty \zeta(t) \d t < +\infty\right\}.$$

\begin{lemma}
\label{le:delta_zeta_h_is_a_metric}
For $\zeta\in \MomZ$ the functional $\delta_\zeta^H$ defines a metric on $\CVc$.
\end{lemma}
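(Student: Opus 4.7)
The plan is to verify the four metric axioms for $\delta_\zeta^H$ on $\CVc$. Symmetry and the vanishing on the diagonal follow immediately from the corresponding properties of $\hat d_H$, and the triangle inequality is obtained by integrating the pointwise triangle inequality
$$\hat d_H(\{\zeta\circ u\ge s\},\{\zeta\circ w\ge s\})\le \hat d_H(\{\zeta\circ u\ge s\},\{\zeta\circ v\ge s\})+\hat d_H(\{\zeta\circ v\ge s\},\{\zeta\circ w\ge s\})$$
over $s\in(0,\infty)$. The substantive work lies in (i) establishing finiteness, so that $\delta_\zeta^H(u,v)<+\infty$ for every $u,v\in\CVc$, and (ii) showing that $\delta_\zeta^H(u,v)=0$ forces $u\equiv v$.

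For finiteness I would adapt the dominating argument from the proof of Lemma~\ref{le:epi_conv_implies_lvl_hd_conv}. Given $u,v\in\CVc$, set $m_u:=\min_{x\in\Rn}u(x)$ and $m_v:=\min_{x\in\Rn}v(x)$, and assume without loss of generality that $m_u\le m_v$. By Lemma~\ref{le:coercive_super_coercive} there exist $a>0$ and $b\in\R$ with $u(x),v(x)\ge a|x|+b$ for every $x\in\Rn$; I would then consider $w(x):=a|x|+b-1/a$, so that each non-empty sublevel set $\{w\le t\}$ is a Euclidean ball of radius at least $1$ containing both $\{u\le t\}$ and $\{v\le t\}$. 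For $s\in(0,\zeta(m_v)]$ both $\{\zeta\circ u\ge s\}$ and $\{\zeta\circ v\ge s\}$ are non-empty, so $\hat d_H=d_H\le\diam\{\zeta\circ w\ge s\}\le 2h(\{\zeta\circ w\ge s\},z_0)$ for any fixed $z_0\in\Sph$. For $s\in(\zeta(m_v),\zeta(m_u)]$ only the first superlevel set is non-empty, and the bound $\hat d_H\le 1+\diam\{\zeta\circ w\ge s\}$ together with the finite length of this interval yields a finite contribution. Integrability of $s\mapsto h(\{\zeta\circ w\ge s\},z_0)$ on $(0,\infty)$ then follows from Lemma~\ref{le:support_function_intable}, whose hypothesis is precisely the defining condition of $\MomZ$.

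For the identity of indiscernibles, assume $\delta_\zeta^H(u,v)=0$. Then $\hat d_H(\{\zeta\circ u\ge s\},\{\zeta\circ v\ge s\})=0$ for almost every $s>0$, and since $\hat d_H$ is a metric this forces $\{\zeta\circ u\ge s\}=\{\zeta\circ v\ge s\}$ on a set $S$ of full Lebesgue measure in $(0,\infty)$. As $\zeta$ is continuous and strictly decreasing, this translates into $\{u\le t\}=\{v\le t\}$ for every $t$ in a dense subset $T\subseteq\R$. Using lower semicontinuity, which gives $\{u\le t\}=\bigcap_{s>t,\,s\in T}\{u\le s\}$ and analogously for $v$, I would conclude that $\{u\le t\}=\{v\le t\}$ for every $t\in\R$, and hence $u\equiv v$.

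The step I expect to be the main obstacle is the finiteness argument in the regime where one of the two superlevel sets becomes empty while the other does not: there $\hat d_H$ jumps to a value of at least $1$ and is no longer the Hausdorff distance of two convex bodies, so the clean domination by a support function breaks down. The key observation that salvages the estimate is that this regime is confined to the bounded $s$-interval $(\zeta(m_v),\zeta(m_u)]$, on which the additive constant integrates trivially and the remaining diameter term is still controlled by the integrable majorant from Lemma~\ref{le:support_function_intable}.
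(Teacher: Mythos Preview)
Your proposal is correct and follows the same strategy as the paper's proof: the easy axioms and the triangle inequality are inherited from $\hat d_H$, finiteness is obtained by dominating with the diameter of the superlevel sets of a common cone (the paper simply refers back to Lemma~\ref{le:epi_conv_implies_lvl_hd_conv} and its proof for this, while you spell out the empty-set regime explicitly), and the identity of indiscernibles is reduced to equality of all sublevel sets. The one minor difference is that the paper upgrades from vanishing integral to vanishing integrand via continuity of the maps $s\mapsto\{\zeta\circ u\ge s\}$ away from the two exceptional values, whereas you pass through a.e.\ equality and a density argument; your route is slightly more direct, and incidentally the intersection identity $\{u\le t\}=\bigcap_{s>t,\,s\in T}\{u\le s\}$ holds for any extended-real-valued function and does not actually rely on lower semicontinuity.
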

\begin{proof}
By the properties of $\zeta$ it follows from Lemma~\ref{le:epi_conv_implies_lvl_hd_conv} and its proof that $\delta_\zeta^H(u,v)<+\infty$ for every $u,v\in\CVc$. Furthermore, it is easy to see that $\delta_{\zeta}^H(u,v)=\delta_{\zeta}^H(v,u)\geq 0$ for all $u,v\in\CVcn$ and $\delta_{\zeta}(u,u)=0$. Moreover, the triangle inequality easily follows from the corresponding property of $\hat{d}_H$. Next, let $u,v\in\CVc$ be such that $\delta_{\zeta}^H(u,v)=0$. Since $\zeta$ is strictly decreasing and continuous, the maps
$$s \mapsto \{\zeta\circ u \geq s\} \quad \text{and} \quad s \mapsto \{\zeta\circ v \geq s\}$$
are continuous on $(0,\infty)$ for $s\neq \max_{x\in\Rn} (\zeta\circ u)(x)$ and $s \neq \max_{x\in\Rn} (\zeta \circ v)(x)$, respectively. Thus, if there exist $s_0,\varepsilon>0$, $\max_{x\in\Rn} (\zeta\circ u)(x) \neq s_0 \neq \max_{x\in\Rn} (\zeta\circ v)(x)$, such that
$$\hat{d}_H(\{\zeta\circ u \geq s_0\}, \{\zeta\circ v\geq s_0\})=\varepsilon>0$$
then also
$$\hat{d}_H(\{\zeta\circ u \geq s\}, \{\zeta\circ v\geq s\})>\frac{\varepsilon}{2}$$
for every $s\in(s_1,s_0]$ with some $s_1<s_0$, which implies
$$
\delta_\zeta^H(u,v)\geq \int_{s_1}^{s_0} \hat{d}_H(\{\zeta\circ u \leq s\},\{\zeta\circ v \leq s\}) \d s> \frac{\varepsilon(s_0-s_1)}{2}.
$$
Therefore, $\{\zeta \circ u \geq s\}=\{\zeta \circ v\geq s\}$ for a.e. $s>0$. Since $\zeta$ is strictly decreasing and thus invertible and since the functions are lower semicontinuous, this implies that $\{u\leq t\}=\{v\leq t\}$ for every $t\in\R$ and therefore $u\equiv v$.
\end{proof}

We will need the following result in order to show that convergence with respect to $\delta_\zeta^H$ implies epi-convergence.

\begin{lemma}
\label{le:delta_zeta_h_min_conv}
Let $\zeta\in \MomZ$. If $u_k,u\in\CVc$ are such that $\delta_\zeta^H(u_k,u)\to 0$, then\linebreak$\min_{x\in\Rn} u_k(x) \to \min_{x\in\Rn} u(x)$ as $k\to\infty$.
\end{lemma}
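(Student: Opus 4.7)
The plan is to argue by contradiction, exploiting the key feature of $\hat{d}_H$: it assigns distance at least $1$ between the empty set and any non-empty convex body. Set $m := \min_{x\in\Rn} u(x)$ and $m_k := \min_{x\in\Rn} u_k(x)$; both minima are attained because every $v\in\CVc$ is proper, lower semicontinuous and has bounded sublevel sets by Lemma~\ref{le:coercive_super_coercive}. Since $\zeta$ is continuous and strictly decreasing with $\lim_{t\to\infty}\zeta(t)=0$, for every $v\in\CVc$ one has
\[
\max\nolimits_{x\in\Rn} \zeta(v(x)) = \zeta(\min\nolimits_{x\in\Rn} v(x)),
\]
so the superlevel set $\{\zeta\circ v \geq s\}$ is non-empty precisely when $s\leq \zeta(\min v)$.

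Now suppose for contradiction that $m_k\not\to m$. Passing to a subsequence, there exists $\varepsilon>0$ such that either (i) $m_{k_j}\geq m+\varepsilon$ for all $j$, or (ii) $m_{k_j}\leq m-\varepsilon$ for all $j$. In case (i), whenever $s\in (\zeta(m+\varepsilon),\zeta(m))$ one has $s>\zeta(m_{k_j})$, hence $\{\zeta\circ u_{k_j}\geq s\}=\emptyset$, while $\{\zeta\circ u\geq s\}\neq\emptyset$. By definition of $\hat{d}_H$ this forces $\hat{d}_H(\{\zeta\circ u_{k_j}\geq s\},\{\zeta\circ u\geq s\})\geq 1$, so
\[
\delta_\zeta^H(u_{k_j},u) \geq \int_{\zeta(m+\varepsilon)}^{\zeta(m)} 1\,\d s = \zeta(m)-\zeta(m+\varepsilon) > 0,
\]
where strict positivity comes from the strict monotonicity of $\zeta$. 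This contradicts $\delta_\zeta^H(u_k,u)\to 0$. Case (ii) is symmetric: for $s\in(\zeta(m),\zeta(m-\varepsilon))$ the roles of emptiness swap, giving $\delta_\zeta^H(u_{k_j},u)\geq \zeta(m-\varepsilon)-\zeta(m)>0$, again a contradiction.

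There is no substantial obstacle here; the whole statement is essentially a direct consequence of the design of $\hat{d}_H$ (penalizing a mismatch with $\emptyset$ by at least $1$) together with the strict monotonicity of $\zeta\in\MomZ$. The only mild subtlety is noticing that the superlevel sets of $\zeta\circ v$ vanish exactly above the threshold $\zeta(\min v)$, which makes the threshold value $\min v$ readable off of $\delta_\zeta^H$.
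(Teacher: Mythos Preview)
Your proof is correct and follows essentially the same approach as the paper: argue by contradiction, pass to a subsequence with $|m_{k_j}-m|\geq\varepsilon$, and exploit that $\hat{d}_H$ is at least $1$ whenever exactly one of the two superlevel sets is empty, which happens on an interval of positive length by the strict monotonicity of $\zeta$. The paper phrases the argument via the sublevel sets $\{u\leq t\}$ and a substitution $t=\zeta^{-1}(s)$ in the integral, whereas you work directly with the superlevel sets $\{\zeta\circ v\geq s\}$, but the content is identical.
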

\begin{proof}
Let $m:=\min_{x\in\Rn} u(x)$ and $m_k:=\min_{x\in\Rn} u_k(x)$, $k\in\N$, and
assume that the statement is not true. Thus, there exist $c>0$ and a subsequence $u_{k_j}$ such that $|m_{k_j}-m|> c$ for every $j\in\N$. By possibly restricting to another subsequence, we will furthermore assume that
$$m+c< m_{k_j}$$
for every $j\in\N$ and remark that the case $m_{k_j}+c< m$ can be treated similarly. We now have $\{u_{k_j}\leq t\}=\emptyset$, $\{u\leq t\}\neq \emptyset$ and therefore
$$\hat{d}_H(\{u_{k_j}\leq t\}, \{u\leq t\}) = \hat{d}_H(\emptyset, \{u\leq t\}) \geq 1$$
for every $t\in[m,m+c]$ and every $j\in\N$. Thus, by the definition of $\delta_\zeta^H$,
\begin{align*}
\delta_{\zeta}^H(u_{k_j},u) &\geq \int_{\zeta(m+c)}^{\zeta(m)} \hat{d}_H(\{\zeta\circ u_{k_j}\geq s\},\{\zeta\circ u\geq s\}) \d s\\
&=\int_{\zeta(m+c)}^{\zeta(m)} \hat{d}_H(\{u_{k_j}\leq \zeta^{-1}(s)\},\{u\leq \zeta^{-1}(s)\})\d s\\
&\geq \zeta(m)-\zeta(m+c)\\
&>0
\end{align*}
for every $j\in\N$, which is a contradiction. Hence, $\lim_{k\to\infty} \min_{x\in\Rn} u_k(x)=\min_{x\in\Rn} u(x)$.
\end{proof}

\begin{proposition}
Let $\zeta\in \MomZ$. If $u_k,u\in \CVc$ are such that $\delta_{\zeta}^H(u_k,u)\to 0$, then $u_k\eto u$ as $k\to\infty$.
\end{proposition}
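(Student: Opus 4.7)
The plan is to combine the convergence $\min_{x\in\Rn} u_k(x)\to \min_{x\in\Rn} u(x)$ from Lemma~\ref{le:delta_zeta_h_min_conv} with the converse direction of Lemma~\ref{le:hd_conv_lvl_sets} via a subsequence argument. Since epi-convergence is induced by a metric, it suffices to show that every subsequence of $u_k$ admits a further subsequence that epi-converges to $u$. So let $u_{k_l}$ be an arbitrary subsequence of $u_k$. Then $\delta_\zeta^H(u_{k_l},u)\to 0$ still holds, which says that the non-negative functions $s\mapsto \hat{d}_H(\{\zeta\circ u_{k_l}\geq s\},\{\zeta\circ u\geq s\})$ converge to zero in $L^1((0,\infty))$. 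After passing to a further subsequence (still denoted $u_{k_l}$), these functions converge to zero pointwise for almost every $s$. Since $\zeta$ is a continuous, strictly decreasing bijection between $\R$ and its range, the change of variable $s=\zeta(t)$ produces a set $T\subseteq \R$ of full Lebesgue measure with $\hat{d}_H(\{u_{k_l}\leq t\},\{u\leq t\})\to 0$ for every $t\in T$.

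Next, I would verify for this subsequence the hypothesis of the converse part of Lemma~\ref{le:hd_conv_lvl_sets}: for every $t\in\R$ there exists a sequence $t_l\to t$ with $\{u_{k_l}\leq t_l\}\to \{u\leq t\}$. For $t\in T$ the constant sequence $t_l=t$ works, because $\hat{d}_H(A,\emptyset)\geq 1$ whenever $A\neq\emptyset$ forces either both sublevel sets to be eventually non-empty with $d_H(\{u_{k_l}\leq t\},\{u\leq t\})\to 0$, or both to be eventually empty, which is precisely the notion of convergence used in Lemma~\ref{le:hd_conv_lvl_sets}. For $t\notin T$ I would select $s_m\in T$ with $s_m\to t$, taking $s_m\downarrow t$ if $t\geq \min_{x\in\Rn} u(x)$ and $s_m\uparrow t$ otherwise; in both cases lower semicontinuity of $u$ gives $\{u\leq s_m\}\to \{u\leq t\}$ as $m\to\infty$. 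A diagonal procedure then yields $t_l\to t$ with $t_l\in T$, $\hat{d}_H(\{u_{k_l}\leq t_l\},\{u\leq t_l\})\to 0$, and $\{u\leq t_l\}\to \{u\leq t\}$, from which $\{u_{k_l}\leq t_l\}\to \{u\leq t\}$ follows. Lemma~\ref{le:hd_conv_lvl_sets} then gives $u_{k_l}\eto u$, and the subsequence principle completes the proof.

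The main obstacle is this final diagonal step: the approximating values $s_m$ must be chosen in $T$ and from the correct side of $t$ (so that the limit $\{u\leq t\}$ is genuinely approached in $\hat{d}_H$), and the indices $l$ and $m$ must be interleaved carefully so that the two sources of error---the deviation between $\{u_{k_l}\leq t_l\}$ and $\{u\leq t_l\}$ on the one hand, and between $\{u\leq t_l\}$ and $\{u\leq t\}$ on the other---both tend to zero along the same sequence of indices.
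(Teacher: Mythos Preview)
Your approach is essentially correct but differs from the paper's. The paper argues directly: it shows $\hat{d}_H(\{u_k\leq t\},\{u\leq t\})\to 0$ for every $t\neq\min_{x\in\Rn}u(x)$ by contradiction, using Lemma~\ref{le:delta_zeta_h_min_conv} to ensure the relevant sublevel sets are eventually non-empty, and then exploiting the continuity of $t\mapsto\{u\leq t\}$ on $(\min_{x\in\Rn}u(x),\infty)$ to produce a fixed interval $[t_0,t_1]$ on which the integrand in $\delta_\zeta^H$ stays bounded below by $c/2$, contradicting $\delta_\zeta^H(u_k,u)\to 0$. Your route via the subsequence principle, $L^1\Rightarrow$ pointwise a.e.\ along a subsequence, and a diagonal construction is softer and avoids the case analysis on which side realizes the Hausdorff distance; it also makes no real use of Lemma~\ref{le:delta_zeta_h_min_conv} despite mentioning it. The paper's argument is shorter and more self-contained, while yours recycles standard measure-theoretic facts.

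One technical point deserves correction. The claim that the change of variable $s=\zeta(t)$ yields a set $T\subseteq\R$ of \emph{full Lebesgue measure} is not justified: a strictly monotone continuous $\zeta$ need not have an absolutely continuous inverse, so $\zeta^{-1}$ can map a Lebesgue-null set of $s$-values to a set of positive measure in $t$. What \emph{does} hold, and what your argument actually uses, is that $\R\setminus T=\zeta^{-1}(N)$ contains no interval (since $\zeta$ maps any nondegenerate interval onto an interval of positive length, which cannot lie in the null set $N$). Hence $T$ is dense, and in fact accumulates at every $t$ from both sides; this is exactly what is needed to select $s_m\in T$ with $s_m\downarrow t$ or $s_m\uparrow t$ and run the diagonal argument. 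With ``full Lebesgue measure'' replaced by ``dense'', the proof goes through as written.
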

\begin{proof}
By Lemma~\ref{le:hd_conv_lvl_sets} and Remark~\ref{re:hd_conv_lvl_sets} it is enough to show that $\hat{d}_H(\{u_k\leq t\},\{u\leq t\}) \to 0$ for every $t\neq \min_{x\in\Rn} u(x)$. Assume on the contrary that there exists $t_0\neq \min_{x\in\Rn} u(x)$ such that $\hat{d}_H(\{u_k\leq t_0\},\{u\leq t_0\})\not\to 0$. In this case, there exist $c>0$ and a subsequence $u_{k_j}$ such that
$$\hat{d}_H(\{u_{k_j}\leq t_0\},\{u\leq t_0\})\geq c$$
for every $j\in\N$. By Lemma~\ref{le:delta_zeta_h_min_conv} there exists $m\in\R$ such that $t_0>m >\min_{x\in\Rn} u(x)$ and furthermore $t_0>m> \min_{x\in\Rn} u_{k_j}(x)$ for every $j\geq j_0$ with some $j_0\in\N$. By possibly restricting to another subsequence we can therefore either assume that there exists $x_j \in \{u_{k_j}\leq t_0\}$ such that $d_H(x_j,\{u\leq t_0\})\geq c$ for every $j\in\N$ or there exists $x_j\in\{u\leq t_0\}$ such that $d_H(x_j,\{u_{k_j}\leq t_0\})\geq c$ for every $j\in\N$.

Assume first that there exists $x_j\in\{u_{k_j}\leq t_0\}$ with $d_H(x_j,\{u\leq t_0 \})\geq c$ for every $j\in\N$. Since $t\mapsto\{u\leq t\}$ is continuous for $t>\min_{x\in\Rn} u(x)$, there exists $t_1>t_0$ such that
$$\{u\leq t\}\subseteq \{u\leq t_0\}+\frac c2 B^n$$
for every $t\in[t_0,t_1]$. Since $x_j\in\{u_{k_j}\leq t\}$ for every $t\geq t_0$ and $j\in\N$, this implies that
$$\hat{d}_H(\{u_{k_j}\leq t\},\{u\leq t\}) \geq \frac c2$$
for every $t\in[t_0,t_1]$. Hence,
\begin{align*}
\delta_\zeta^H(u_{k_j},u) &= \int_0^{\infty} \hat{d}_H({\{\zeta\circ u_{k_j} \geq s\},\{\zeta\circ u_\geq s\}})\d s\\
&\geq \int_{\zeta(t_1)}^{\zeta(t_0)} \hat{d}_H({\{u_{k_j} \leq \zeta^{-1}(s)\},\{ u \leq \zeta^{-1}(s)\}})\d s\\
&\geq (\zeta(t_0)-\zeta(t_1))\, \frac c2\\
&>0
\end{align*}
for every $j\in\N$, which contradicts $\delta_{\zeta}^H(u_k,u)\to 0$.

In the remaining case we assume that there exists $x_j\in\{u\leq t_0\}$ such that $d_H(x_j,\{u_{k_j}\leq t_0\})\geq c$ for every $j\in\N$. Again, since $t\mapsto \{u\leq t\}$ is continuous for $t>\min_{x\in\Rn} u(x)$, there exists $t_1\in(m,t_0)$ such that
$$\emptyset \neq B(x_j,\tfrac c2)\cap \{u\leq t\}$$
 for every $t\in[t_1,t_0]$. Since $\{u_{k_j}\leq t\} \subseteq \{u_{k_j}\leq t_0\}$ for every $t\leq t_0$ this implies that
$$\hat{d}_H(\{u_{k_j}\leq t\}, \{u\leq t\})\geq \frac c2$$
for every $t\in[t_1,t_0]$. Similar to the first case, this contradicts $\delta_\zeta^H(u_k,u)\to 0$.
\end{proof}

\subsection{Another Hausdorff-Type Metric}
\label{subse:another_hd_type_metric}
We try to find a functional analog of the usual definition of the Hausdorff metric which uses addition of balls, by replacing balls with suitable convex functions and Minkowski addition by inf-convolution. For $\lambda>0$ let $n_\lambda \in \CVc$ be defined as
$$n_\lambda(x)=h\big(\tfrac 1\lambda B^n,x\big)=\frac{1}{\lambda} |x|$$
for $x\in\Rn$. Observe, that $n_\lambda \eto \Ind_{\{0\}}$ as $\lambda\to 0^+$ and $n_\lambda^*=\Ind_{\frac 1\lambda B^n}$ for every $\lambda>0$. We now define $\delta:\CVc\times\CVc\to [0,\infty)$ as
\begin{align*}
\delta(u,v)&=\inf \{\lambda >0\colon u \geq v \infconv (n_\lambda-\lambda) \text{ and } v \geq u \infconv (n_\lambda-\lambda) \}\\
&=\inf\{\lambda >0\colon u^* \leq v^* + \Ind_{\frac 1\lambda B^n} + \lambda \text{ and } v^* \leq u^* + \Ind_{\frac 1\lambda B^n} + \lambda\}\\
&=\inf\{\lambda >0 \colon \|u^*-v^*\|_{\infty, \frac 1\lambda B^n} \leq \lambda\}
\end{align*}
for $u,v\in\CVc$, where
$$\|u^*-v^*\|_{\infty, \frac 1\lambda B^n}=\sup\nolimits_{x\in \frac 1\lambda B^n} |u^*(x)-v^*(x)|.$$ 
Note, that $u^*$ and $v^*$ are again lower semicontinuous convex functions that contain the origin in the interiors of their respective domains (see Lemma \ref{le:coercive_super_coercive_duals}). Furthermore, the only possible discontinuities of $u^*$ and $v^*$ occur at the boundary of their domains, where the functions jump to $+\infty$. Hence
$$\|u^*-v^*\|_{\infty,\frac 1\lambda B^n}$$
is continuous (and decreasing) for $\lambda\in(\lambda_0,+\infty)$ for some $\lambda_0\geq 0$. Furthermore, as $\lambda\to \lambda_0^+$ this expression either goes to $+\infty$ or it converges to a finite value with a jump to $+\infty$ as soon as $\lambda<\lambda_0$ (presuming $\lambda_0\neq 0$). In particular, this means that the infimum in the definition of $\delta(u,v)$ is in fact either a minimum or $0$.

\begin{example}
Let $u=\Ind_K$ and $v=\Ind_L$ with $K,L\in\Kn$. It follows from the definition of $\delta$ that
\begin{align*}
\delta(u,v)&=\inf \{\lambda > 0\colon \|h_K-h_L\|_{\infty,\frac 1\lambda B^n}\leq \lambda \}\\
&=\inf \{\lambda >0 \colon \tfrac 1\lambda d_H(K,L)\leq \lambda\}\\
&= \sqrt{d_H(K,L)}.
\end{align*}
\end{example}

\begin{lemma}
\label{le:delta_is_a_metric}
The functional $\delta$ defines a finite metric on $\CVc$. Furthermore, $\delta$ is invariant under joint translations of epigraphs, that is,
$$\delta(u,v)=\delta(u(\cdot-x_0)+t_0,v(\cdot-x_0)+t_0)$$
for every $u,v\in\CVc$ and for every $x_0\in\Rn$ and $t_0\in\R$.
\end{lemma}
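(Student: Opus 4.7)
The plan is to verify each metric axiom using the dual characterization
$$\delta(u,v) = \inf\{\lambda > 0 : \|u^* - v^*\|_{\infty, \frac{1}{\lambda}B^n} \leq \lambda\},$$
and then read off translation invariance from a short calculation with Legendre transforms. Finiteness is essentially built in from the paragraph preceding the statement: since $u,v \in \CVc$, Lemma~\ref{le:coercive_super_coercive_duals} places $0$ in the interior of both $\dom u^*$ and $\dom v^*$, so on some ball $rB^n$ the proper l.s.c.\ convex functions $u^*$ and $v^*$ are continuous and hence bounded. Setting $M = \|u^*-v^*\|_{\infty, rB^n}$ and picking any $\lambda_0 \geq \max\{M, 1/r\}$ gives $\|u^*-v^*\|_{\infty, \frac{1}{\lambda_0}B^n}\leq M \leq \lambda_0$, which shows the set defining $\delta(u,v)$ is non-empty. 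Non-negativity, symmetry, and $\delta(u,u)=0$ are immediate from the definition.

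For the triangle inequality, fix $\varepsilon > 0$ and choose $\lambda_1,\lambda_2 > 0$ with $\lambda_1 \leq \delta(u,v) + \varepsilon$, $\lambda_2 \leq \delta(v,w) + \varepsilon$, and satisfying $\|u^*-v^*\|_{\infty, \frac{1}{\lambda_1}B^n} \leq \lambda_1$, $\|v^*-w^*\|_{\infty, \frac{1}{\lambda_2}B^n} \leq \lambda_2$. Setting $\lambda = \lambda_1+\lambda_2$, the inclusions $\frac{1}{\lambda}B^n \subseteq \frac{1}{\lambda_i}B^n$ combined with the ordinary sup-norm triangle inequality give $\|u^*-w^*\|_{\infty,\frac{1}{\lambda}B^n} \leq \lambda_1+\lambda_2 = \lambda$, hence $\delta(u,w) \leq \delta(u,v) + \delta(v,w) + 2\varepsilon$, and sending $\varepsilon \to 0$ finishes this step. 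Translation invariance is even shorter: if $\tilde u(x) = u(x-x_0) + t_0$ then a direct computation from the definition of the Legendre transform yields $\tilde u^*(y) = u^*(y) + \langle x_0,y\rangle - t_0$, and similarly for $v$, so the affine term cancels in $\tilde u^* - \tilde v^*$ and the defining infimum is unchanged.

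The step requiring the most care, and the one I expect to be the main obstacle, is the point-separating axiom $\delta(u,v)=0 \Rightarrow u \equiv v$, since we must extract pointwise information on all of $\Rn$ from the vanishing of an infimum of weighted sup-norms on shrinking-exponent balls. The idea is that if $\delta(u,v)=0$, then for every $R > 0$ one can find $\lambda \leq 1/R$ with $\|u^*-v^*\|_{\infty, \frac{1}{\lambda}B^n} \leq \lambda$; finiteness of this sup-norm forces $RB^n \subseteq \frac{1}{\lambda}B^n \subseteq \dom u^* \cap \dom v^*$, and monotonicity of the sup in the domain gives $\|u^*-v^*\|_{\infty, RB^n} \leq 1/R$. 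Letting $R \to \infty$ yields $\dom u^* = \dom v^* = \Rn$ and $u^* \equiv v^*$ pointwise, so by $u^{**}=u$ and $v^{**}=v$ from the preliminaries we conclude $u \equiv v$.
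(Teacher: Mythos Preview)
Your argument is correct and follows the same route as the paper's. One small slip in the point-separation step: you claim that $\delta(u,v)=0$ forces $\dom u^*=\dom v^*=\Rn$, but this fails already for $u=v$ coercive but not super-coercive. The condition $\|u^*-v^*\|_{\infty,\frac{1}{\lambda}B^n}\leq\lambda$, read via the equivalent formulation $u^*\leq v^*+\Ind_{\frac{1}{\lambda}B^n}+\lambda$ and its symmetric counterpart, permits $u^*(x)=v^*(x)=+\infty$ at points of the ball; what it forces is $u^*(x)=v^*(x)$ for every $x$, finite or not, which is all you need for $u=u^{**}=v^{**}=v$. Apart from this, your proof and the paper's differ only cosmetically: the paper exploits the remark just before the lemma that the infimum is attained, so it works directly with $b=\delta(u,w)$ and $c=\delta(w,v)$ rather than your $\varepsilon$-approximants, and it reads off translation invariance from the epigraph formulation $\epi u\subseteq \epi v+\epi(n_\lambda-\lambda)$ instead of computing conjugates.
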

\begin{proof}
Let $u,v\in\CVc$. It is easy to see that $\delta(u,v)=\delta(v,u)\geq 0$ and $\delta(u,u)=0$. Since the origin is contained in the interiors of $\dom u^*$ and $\dom v^*$, it is furthermore clear that this expression is finite. If $\delta(u,v)=0$, then $\|u^*-v^*\|_{\infty, \frac 1\lambda B^n} \leq \lambda$ for every $\lambda >0$. Hence, $u^*= v^*$ and therefore $u=u^{**}=v^{**}=v$. Since
$$u\geq v \infconv (n_\lambda - \lambda)$$
if and only if
$$\epi u \subseteq \epi v + \epi (n_\lambda-\lambda)$$
for $\lambda>0$, $x_0\in\Rn$ and $t_0\in\R$, it easily follows that $\delta$ is invariant under joint translations of epigraphs.

It remains to prove the triangle inequality. Fix arbitrary $u,v,w\in\CVc$. In order to show
$$\delta(u,v)\leq \delta(u,w)+\delta(w,v)$$
denote $a=\delta(u,v)$, $b=\delta(u,w)$ and $c=\delta(w,v)$. Without loss of generality we may assume that $b>0$ and $c>0$, since otherwise the statement is trivial. We now have
$$
\|u^*-w^*\|_{\infty,\frac{1}{b+c} B^n} +\|w^*-v^*\|_{\infty,\frac{1}{b+c} B^n} \leq \|u^*-w^*\|_{\infty,\frac 1b B^n} + \|w^*-v^*\|_{\infty,\frac 1c B^n} \leq b+c
$$
and therefore
\begin{equation}
\label{eq:inf_lambda_triangle_ineq}
\inf \{\lambda > 0 \colon \|u^*-w^*\|_{\infty,\frac 1\lambda B^n} + \|w^*-v^*\|_{\infty,\frac 1\lambda B^n} \leq \lambda\} \leq b+c.
\end{equation}
Furthermore, for any $\lambda > 0$,
$$\|u^*-v^*\|_{\infty, \frac 1\lambda B^n} \leq \|u^*-w^*\|_{\infty, \frac 1\lambda B^n} + \|w^*-v^*\|_{\infty,\frac 1\lambda B^n}$$
and consequently
$$\{\lambda>0\colon \|u^*-v^*\|_{\infty,\frac 1\lambda B^n}\leq \lambda\} \supseteq \{\lambda>0\colon \|u^*-w^*\|_{\infty,\frac 1\lambda B^n} + \|w^*-v^*\|_{\infty,\frac 1\lambda B^n} \leq \lambda \}.$$
Together with \eqref{eq:inf_lambda_triangle_ineq} we now have
\begin{align*}
a&=\inf \{\lambda>0\,:\, \|u^*-v^*\|_{\infty,\frac 1\lambda B^n}\leq \lambda\}\\
&\leq \inf \{\lambda>0\,:\, \|u^*-w^*\|_{\infty,\frac 1\lambda B^n} + \|w^*-v^*\|_{\infty,\frac 1\lambda B^n} \leq \lambda \}\\
&\leq b+c
\end{align*}
and therefore $\delta(u,v)\leq \delta(u,w)+\delta(w,v)$.
\end{proof}

It is easy to see that $n_\lambda\eto n_1$ as $\lambda \to 1^+$. On the other hand, since for every $\lambda>1$ we have
$$\|n_\lambda^* - n_1^*\|_{\infty,\frac 1\rho B^n}=\|\Ind_{\frac 1\lambda B^n} - \Ind_{B^n}\|_{\infty,\frac 1\rho B^n}=\begin{cases}+\infty\quad &\text{if } 0<\rho < \lambda\\
0\quad &\text{if } \rho\geq \lambda \end{cases}$$
it follows that
$$\delta(n_\lambda,n_1)=\inf\{\rho >0 \colon  \|\Ind_{\frac 1\lambda B^n} - \Ind_{B^n}\|_{\infty,\frac 1\rho B^n} \leq \rho\} = \lambda >1$$
for every $\lambda>1$. In particular, this shows that on $\CVc$, epi-convergence does not necessarily imply convergence with respect to $\delta$. However, both notions of convergence coincide on the smaller space of super-coercive convex functions.

\begin{lemma}
If $u_k,u\in\CVc$ are such that $\delta(u_k,u)\to 0$, then $u_k\eto u$ as $k\to\infty$. Furthermore, if $u_k,u\in\CVs$, then $u_k\eto u$ implies $\delta(u_k,u)\to 0$ as $k\to\infty$.
\end{lemma}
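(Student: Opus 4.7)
The plan is to transfer everything to the convex conjugates via Theorem~\ref{thm:epi_conv_conjugate} and exploit the fact that $\delta(u,v)$ is designed to measure uniform distance between $u^*$ and $v^*$ on Euclidean balls centered at the origin.

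For the forward direction, I will set $\lambda_k := \delta(u_k,u)$ and recall from the discussion preceding the statement that the infimum defining $\delta$ is either zero or attained; thus whenever $\lambda_k>0$ we have $\|u_k^* - u^*\|_{\infty, \frac{1}{\lambda_k} B^n} \leq \lambda_k$, with both conjugates finite on $\frac{1}{\lambda_k} B^n$. Given any $\rho>0$, for $k$ large enough that $\lambda_k \leq 1/\rho$ the ball $\rho B^n$ lies inside $\frac{1}{\lambda_k} B^n$, and hence $\sup_{x \in \rho B^n} |u_k^*(x) - u^*(x)| \leq \lambda_k \to 0$. This forces $u^*$ to be finite on all of $\Rn$ and gives $u_k^* \to u^*$ uniformly on every compact subset of $\Rn$. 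I will then observe that uniform convergence on compacts to a continuous limit implies $u_k^*(x_k) \to u^*(x)$ for every $x_k \to x$, so both the liminf- and limsup-conditions of epi-convergence hold; thus $u_k^* \eto u^*$, and Theorem~\ref{thm:epi_conv_conjugate} yields $u_k \eto u$.

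For the converse, I will assume $u_k,u\in\CVs$ with $u_k \eto u$. Theorem~\ref{thm:epi_conv_conjugate} gives $u_k^*\eto u^*$, and Lemma~\ref{le:coercive_super_coercive_duals} yields $\dom u^* = \dom u_k^* = \Rn$ for every $k$; in particular $u_k^*$ and $u^*$ are everywhere-finite convex functions, hence continuous on $\Rn$. Invoking the general form of Rockafellar--Wets, Theorem~7.17, of which Lemma~\ref{le:epi_conv_equiv} is the $\CVcn$-special case, epi-convergence to a convex function with full domain is equivalent to uniform convergence on every compact subset of $\Rn$. Consequently, for any $\lambda>0$ we will have $\|u_k^* - u^*\|_{\infty, \frac{1}{\lambda} B^n} \leq \lambda$ for all sufficiently large $k$, giving $\delta(u_k,u)\leq \lambda$ and, since $\lambda$ is arbitrary, $\delta(u_k,u)\to 0$.

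The delicate point is that in the converse direction $u^*$ need not be coercive---by Lemma~\ref{le:coercive_super_coercive_duals} that would be equivalent to $0\in\interior \dom u$, which the hypothesis $u\in\CVs$ does not guarantee---so Lemma~\ref{le:epi_conv_equiv} cannot be invoked verbatim. The rescue is that $\dom u^* = \Rn$ has empty boundary, so the $\bd\dom u$-obstruction in the general Rockafellar--Wets result is vacuous and uniform convergence on all compact subsets of $\Rn$ still follows; verifying this carefully is the main ingredient needed to close the converse direction.
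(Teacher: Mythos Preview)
Your proposal is correct and follows essentially the same approach as the paper: pass to conjugates, use that $\delta$ controls $\|u_k^*-u^*\|_{\infty,\rho B^n}$ for every $\rho$, and then invoke Theorem~\ref{thm:epi_conv_conjugate}. You are in fact more careful than the paper about the applicability of Lemma~\ref{le:epi_conv_equiv} (which as stated requires the limit to lie in $\CVcn$), correctly noting that one must appeal to the general form of \cite[Theorem~7.17]{rockafellar_wets} or verify the epi-convergence conditions directly; your observation that $\delta(u_k,u)\to 0$ forces $\dom u^*=\Rn$ is also correct and makes the forward argument clean.
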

\begin{proof}
If $\delta(u_k,u)\to 0$, then $u_k^*$ converges uniformly to $u^*$ on every compact subset of $\Rn$. By Lemma~\ref{le:epi_conv_equiv} this implies $u_k^*\eto u^*$ which by Theorem~\ref{thm:epi_conv_conjugate} is equivalent to $u_k \eto u$.

If $u_k, u$ are furthermore super-coercive, then their conjugates are finite functions by Lemma~\ref{le:coercive_super_coercive_duals}. In that case, $u_k^*\eto u^*$  is equivalent to uniform convergence of $u_k^*$ to $u^*$ on every compact subset of $\Rn$, which implies $\delta(u_k,u)\to 0$.
\end{proof}

\begin{remark}
If $u,v\in\CVs$, then $u^*$ and $v^*$ are finite convex functions by Lemma~\ref{le:coercive_super_coercive_duals}, which implies that $\|u^*-v^*\|_{\infty,\frac 1\lambda B^n}$ is continuous and decreasing in $\lambda$. Therefore, in this case $\delta(u,v)=\lambda$ where $\lambda$ is uniquely determined by $\|u^*-v^*\|_{\infty,\frac 1\lambda B^n} = \lambda$.
\end{remark}

\section*{Acknowledgments}
This publication is part of a project that has received funding from the European Research Council (ERC) under the European Union’s Horizon 2020 research and innovation programme (grant agreement No. 770127). Furthermore, the authors are most grateful to Shiri Artstein-Avidan for helpful discussions and remarks.

\footnotesize

\phantomsection
\addcontentsline{toc}{section}{References}

\bigskip\bigskip
\parindent 0pt\normalsize

\parbox[t]{8.5cm}{
School of Mathematical Sciences\\
Tel Aviv University\\
69978 Tel Aviv, Israel\\
liben@mail.tau.ac.il\\
mussnig@gmail.com}

\end{document}